\newtheorem{theorem}{Theorem}[section]
\newtheorem{lemma}[theorem]{Lemma}
\newtheorem{proposition}[theorem]{Proposition}
\theoremstyle{definition}
\newtheorem{definition}[theorem]{Definition}}
\theoremstyle{remark}
\newtheorem{remark}[theorem]{Remark}}
\numberwithin{equation}{section}
\definecolor{dullmagenta}{rgb}{0.4,0,0.4}   % #660066
\definecolor{darkblue}{rgb}{0,0,0.4}
\def\i{ \mathsf{i}}
\def\P {{\mathbb{P}^{\theta,M}_{N,k}}}
\def\I {{I}}
\def\bX {{\bf X}}
\def\XX {{\mathfrak{X}^{\theta,M}_{N,k}}}
\def\R {{\textup{Res}}}
\def\XXX {{\mathfrak{X}^{\theta,M}_{N,1}}}
\def\K{\kappa}
\def\SS{\mathfrak S}
\def\tb {{\bf t}}
\def\vm {{\bf v}}
\def\Sp{A_{+}}
  \def\Sm{A_{-}}
\def\b{\mathfrak{b}^{i,s}_1}
\def\Tp{B_{+}}
  \def\Tm{B_{-}}
\def\bb{\mathfrak{b}^{i,s}_2}
\def\bn{\mathfrak{b}^{i,s}}
\def\tbox(#1,#2)#3{
\x=#1 \y=#2 
\multiply\x by 12 
\multiply\y by 12 
\z=\x \t=\y
\advance\z by 12 
\advance\t by 12 
\psline(\x,\y)(\x,\t)(\z,\t)(\z,\y)(\x,\y)
\advance\x by 6
\advance\y by 6 
\rput(\x,\y){{\bf #3}}}
\def\gbox(#1,#2)#3{
\x=#1 \y=#2 
\multiply\x by 12 
\multiply\y by 12 
\z=\x \t=\y
\advance\z by 12 
\advance\t by 12 
\psline[linecolor=gray, linewidth=0.5pt](\x,\y)(\x,\t)(\z,\t)(\z,\y)(\x,\y)
\advance\x by 6
\advance\y by 6 
\rput(\x,\y){\gray{#3}}}
\def\ebox(#1,#2)#3{
\x=#1 \y=#2 
\multiply\x by 12 
\multiply\y by 12 
\advance\x by 6
\advance\y by 6 
\rput(\x,\y){#3}}
\def\ggbox(#1,#2){
\x=#1 \y=#2 
\multiply\x by 12 
\multiply\y by 12 
\z=\x \t=\y
\advance\z by 12 
\advance\t by 12 
\psframe[fillstyle=solid, fillcolor=mygray, linewidth=0pt](\x,\y)(\z,\t)
\psline[linecolor=gray, linewidth=0.5pt](\x,\y)(\x,\t)(\z,\t)(\z,\y)(\x,\y)}
\def\tline(#1,#2)(#3,#4){
\x=#1 \y=#2 \z=#3 \t=#4
\multiply\x by 12 
\multiply\y by 12 
\multiply\z by 12 
\multiply\t by 12 
\psline(\x,\y)(\z,\t)}
\title{Multi-level loop equations for $\beta$-corners processes}
\author[E. Dimitrov]{Evgeni Dimitrov}
\address{E. Dimitrov, Department of Mathematics, USC, Los Angeles, CA 90089} 
\email{edimitro52@gmail.com}
\author[A. Knizel]{Alisa Knizel}
\address{A. Knizel,  Department of Mathematics, Barnard College, New York, NY 10027}
\email{aknizel@barnard.edu}
\begin{document}

\maketitle 

\begin{abstract}
The goal of the paper is to introduce a new set of tools for the study of discrete and continuous $\beta$-corners processes. In the continuous setting, our work provides a multi-level extension of the loop equations (also called Schwinger-Dyson equations) for $\beta$-log gases obtained by Borot and Guionnet in (Commun. Math. Phys. 317, 447-483, 2013). In the discrete setting, our work provides a multi-level extension of the loop equations (also called Nekrasov equations) for discrete $\beta$-ensembles obtained by Borodin, Gorin and Guionnet in (Publications math{\' e}matiques de l'IH{\' E}S 125, 1-78, 2017).
\end{abstract}

\tableofcontents

%-------------------------------------------------------------------------------------------------------------------------------------------------------------------------------------------------
% Section 1
%
%-------------------------------------------------------------------------------------------------------------------------------------------------------------------------------------------------
\section{Introduction and main result}\label{Section1}

%-------------------------------------------------------------------------------------------------------------------------------------------------------------------------------------------------
% Section 1.1
%
%-------------------------------------------------------------------------------------------------------------------------------------------------------------------------------------------------
\subsection{Preface}\label{Section1.1}
 Fix $N \in \mathbb{N}$, $a_- \in [-\infty, \infty)$, $a_+ \in (-\infty, \infty]$ with $a_- < a_+$, $\beta > 0$ and a continuous function $V$ on $\mathbb{R}$. A {\em continuous $\beta$-corners process} is a probability distribution with density
\begin{equation}\label{S1Corners1}
\begin{split}
&f(y) =  (Z_N^c)^{-1} \cdot {\bf 1}_{\mathcal{G}_N}  {\bf 1}\{ a_- < y^N_1, y^N_N < a_+\} \cdot  g_N(y) \mbox{, where } y = (y^N, \dots, y^1) \in \mathbb{R}^N \times \cdots \times \mathbb{R},\\
&g_N(y) = \prod_{1 \leq i<j \leq N} \hspace{-2mm} (y_j^N-y_i^N)  \hspace{-1mm}  \prod_{j = 1}^{N-1}\left(\prod_{1\leq a<b \leq j}
(y_b^j-y_a^j)^{2-\beta} \prod_{a=1}^j\prod_{b=1}^{j+1}|y_a^j-y_b^{j+1}|^{\frac{\beta}{2}-1}\right)  \prod_{i = 1}^N e^{-\frac{N \beta}{2} V(y^N_i)}, \\
& \mathcal{G}_N = \{ y = (y^N, \dots, y^1) \in \mathbb{R}^N \times \cdots \times \mathbb{R}:  y_{i}^{j}<y_i^{j-1}<y_{i+1}^{j}, \mbox{$j = 1,\dots, N$, $i = 1, \dots, j$} \},
\end{split}
\end{equation}
and $Z_N^c$ is a normalization constant. Measures of the form (\ref{S1Corners1}) are of special interest to random matrix theory as they describe the joint distribution of eigenvalues of classical random matrix ensembles. For example, when $a_- = -\infty$, $a_+ = \infty$ and $V(y) = y^2/2$ the measure with density (\ref{S1Corners1}) is called the {\em Hermite $\beta$-corners process}, introduced in \cite{GS}. For $\beta = 1,2,4$ the Hermite $\beta$-corners process is the distribution of the set of eigenvalues of the top-left $k \times k$ corner $[M_{ij} ]^k_{i,j=1}$ for $k  = 1, \dots, N$, where $M = [M_{ij}]^N_{i,j=1}$ is a random matrix from the Gaussian Orthogonal ($\beta = 1$)/ Unitary ($\beta = 2$)/ Symplectic ($\beta = 4$) Ensemble, see \cite[Section 9]{NajVir} for a proof.

The measures in (\ref{S1Corners1}) enjoy many remarkable properties. For example, if we fix $k \in \{1, \dots, N\}$ and consider the pushforward measure on $(y^N, \dots, y^k) \in \mathbb{R}^N \times \cdots \times \mathbb{R}^k$, then it has density
\begin{equation}\label{S1Corners2}
\begin{split}
&f_{N,k}(y^N, \dots, y^k) =  (Z_N^c)^{-1} \cdot {\bf 1}_{\mathcal{G}_{N,k}} \cdot  {\bf 1}\{ a_- < y^N_1, y^N_N < a_+\} \cdot  g_{N,k}(y^N, \dots, y^k) \mbox{, where }\\
&g_{N,k}(y^N, \dots, y^k) = \prod_{j = 1}^{k} \frac{\Gamma(\beta/2)^j}{\Gamma(j\beta/2)} \cdot \prod_{1 \leq i<j \leq N}  (y_j^N-y_i^N) \cdot  \prod_{1 \leq i<j \leq k}  (y_j^k-y_i^k)^{\beta - 1}  \\
& \times \prod_{j = k}^{N-1}\left(\prod_{1\leq a<b \leq j}(y_b^j-y_a^j)^{2-\beta} \prod_{a=1}^j\prod_{b=1}^{j+1}|y_a^j-y_b^{j+1}|^{\frac{\beta}{2}-1}\right)  \prod_{i = 1}^N e^{-\frac{N\beta}{2} V(y^N_i)}, \mbox{ and }\\
&\mathcal{G}_{N,k} = \{ y = (y^N, \dots, y^k) \in \mathbb{R}^N \times \cdots \times \mathbb{R}^k: y_{i}^{j}<y_i^{j-1}<y_{i+1}^{j}, \mbox{$j = k,\dots, N$, $i = 1, \dots, j$} \}.
\end{split}
\end{equation}
The latter statement can be deduced from a version of the Dixon-Anderson identity \cite{Anderson, Dixon}, see Section \ref{Section4.1} for the details. When $k = N$ we have that (\ref{S1Corners2}) becomes
\begin{equation}\label{S1CornersProjTop}
\begin{split}
f_{N, N}(y^N) =  \hspace{2mm} & (Z_{N}^c)^{-1} \cdot \prod_{j = 1}^{N} \frac{\Gamma(\beta/2)^j}{\Gamma(j\beta/2)}  \cdot  {\bf 1}\{ a_- < y^N_1 < y^N_2 < \cdots < y^N_N < a_+\}  \\
&  \times \prod_{1 \leq i<j \leq N} (y_j^N-y_i^N)^{\beta}  \prod_{i = 1}^N e^{-\frac{N\beta}{2} V(y^N_i)},
\end{split}
\end{equation}
which is precisely the distribution of a {\em $\beta$-log gas} \cite{Forr}. In this way, one can view (\ref{S1Corners1}) and (more generally) (\ref{S1Corners2}) as natural multi-level extensions of the $\beta$-log gases on $\mathbb{R}$.\\

In \cite{DK2020} the authors of the present paper proposed a certain {\em integrable discretization} of the measures in (\ref{S1Corners2}), which we presently describe. We begin by introducing some useful notation and defining the state space of the model.

Throughout the paper we write $\llbracket a,b \rrbracket = \{a, a+1, \dots, b\}$ for two integers $b \geq a$, with the convention that $\llbracket a,b \rrbracket = \emptyset$ if $a > b$. For $N \in \mathbb{N}$, $M \in \mathbb{Z}_{\geq 0}$, $k \in \llbracket 1, N \rrbracket$ and $\theta = \beta/2  > 0$  we define
\begin{equation}\label{S1GenState}
\begin{split}
&\Lambda^M_N = \{  (\lambda_1, \dots, \lambda_N) \in \mathbb{Z}^N : \lambda_1\geq  \lambda_2 \geq \cdots \geq \lambda_N,\mbox{ and }0  \leq \lambda_i  \leq M \}, \\
&\mathbb{W}^{\theta,M}_{k} = \{ (\ell_1, \dots, \ell_k):  \ell_i = \lambda_i - i \cdot \theta, \mbox{ with } (\lambda_1, \dots, \lambda_k) \in \Lambda^M_k\}, \\
&\XX= \{ (\ell^N, \ell^{N-1}, \dots, \ell^{k}) \in \mathbb{W}^{\theta, M}_{N} \times \mathbb{W}^{\theta,M}_{N-1} \times \cdots \times \mathbb{W}^{\theta,M}_{k} : \ell^N \succeq \ell^{N-1} \succeq \cdots \succeq \ell^{k} \},
\end{split}
\end{equation}
where $\ell \succeq m$ means that if $ \ell_i = \lambda_i - i \cdot\theta$ and $m_i = \mu_i - i \cdot \theta$ then $\lambda_1 \geq \mu_1 \geq \lambda_2 \geq \mu_2 \geq \cdots \geq \mu_{N-1} \geq \lambda_N$. The set $\XX$ is the state space of our point configuration $(\ell^N, \ell^{N-1}, \dots, \ell^{k})$.

We consider measures on $\XX$ of the form
\begin{equation}\label{S1PDef}
\P(\ell^N, \dots, \ell^{k}) = (Z_{N,k}^d)^{-1} \cdot H^t(\ell^N) \cdot H^b(\ell^k)
\cdot \prod_{j = k}^{N-1}  \I(\ell^{j+1}, \ell^j), \mbox{ where }
\end{equation}
\begin{equation}\label{S1PDef2}
\begin{split}
\I(\ell^{j+1}, \ell^{j}) = & \prod_{1 \leq p < q \leq j+1}\frac{\Gamma(\ell^{j+1}_p - \ell^{j+1}_q + 1 - \theta)}{\Gamma(\ell^{j+1}_p - \ell^{j+1}_q) } \cdot \prod_{1 \leq p < q \leq j} \frac{\Gamma(\ell^{j}_p - \ell^{j}_q + 1)}{\Gamma(\ell^{j}_p - \ell^{j}_q + \theta)} \\
&\times\prod_{1 \leq p < q \leq j+1} \frac{\Gamma(\ell^{j}_p -
  \ell^{j+1}_q)}{ \Gamma(\ell^{j}_p - \ell^{j+1}_q + 1 - \theta)}  \cdot
\prod_{1 \leq p \leq q \leq j} \frac{\Gamma(\ell^{j+1}_p -
  \ell^{j}_q + \theta)}{\Gamma(\ell^{j+1}_p - \ell^{j}_q + 1)} ,
\end{split}
\end{equation}
\begin{equation}\label{S1PDef3}
\begin{split}
H^t(\ell^N) = \prod_{1 \leq p < q \leq N} &\frac{\Gamma(\ell^N_p - \ell^N_q + 1)}{\Gamma(\ell^N_p - \ell^N_q + 1 - \theta)}  \prod_{p = 1}^N w_N(\ell^N_p) , \hspace{2mm} H^b(\ell^k) =   \prod_{1 \leq p < q \leq k} \frac{\Gamma(\ell^k_p - \ell^k_q + \theta)}{\Gamma(\ell^k_p - \ell^k_q)}.
\end{split}
\end{equation}
In (\ref{S1PDef}), (\ref{S1PDef2}) and (\ref{S1PDef3}) we have that $\Gamma$ is the usual Euler gamma function, $w_N$ is a positive function on $[-N\theta, M - \theta]$ and $Z_{N,k}^d$ is a normalization constant. When $k = 1$ we refer to the measures in (\ref{S1PDef}) as {\em discrete $\beta$-corners processes}.
\begin{remark}\label{S1ProjR} We mention that the same way (\ref{S1Corners2}) for $k = m$ is the projection of (\ref{S1Corners2}) for any $k \in \llbracket 1, m \rrbracket$ to $(y^N, \dots, y^m)$ we have that (\ref{S1PDef}) for $k = m \in \llbracket 1, N \rrbracket$ is a projection of (\ref{S1PDef}) for any $k \in \llbracket 1, m \rrbracket$ to $(\ell^N, \dots, \ell^m)$. This stability with respect to projections is proved in Lemma \ref{ProjLemma} and is a consequence of the branching relations for Jack symmetric functions.
\end{remark}
\begin{remark}In Proposition \ref{prop_cont_limit} we prove that the measures in (\ref{S1Corners2}) can be obtained as a diffuse scaling limit of those in (\ref{S1PDef}) -- this is why we refer to $\P$ as a discretization of (\ref{S1Corners2}). We mention that the fact that (\ref{S1Corners2}) is a diffuse scaling limit of (\ref{S1PDef}) is somewhat well-known, and can be deduced for example using the arguments in \cite[Section 2]{GS} for $\theta$-Gibbs measures. The proof we give for Proposition \ref{prop_cont_limit} is more direct than \cite[Section 2]{GS} and contains more details.

The reason we further call $\P$ an {\em integrable} discretization comes from the enhanced algebraic structure contained in $\P$, coming from the connection of these measures to Jack symmetric functions -- see Section \ref{Section4.2} and \cite[Section 6]{DK2020} for more details. We mention that the symmetric function origin of the measure $\P$ is the reason we reversed the indices of the particle locations compared to (\ref{S1Corners2}) so that $\ell_1^j > \cdots > \ell_j^j$, while in (\ref{S1Corners2}) we have $y^j_j > \cdots > y_1^j$ (as is common in random matrix theory).
\end{remark}
\begin{remark}Our primary interest in the measures $\P$ in (\ref{S1PDef}) comes from their connection to integrable probability. For example, $\P$ arise in a class of integrable models, called {\em ascending Jack processes} (these are special cases of the {\em Macdonald processes} of \cite{BorCor} in the parameter specialization that takes Macdonald to Jack symmetric functions). 

When $\theta = 1$ the measures $\P$ also naturally appear in probability distributions on the irreducible representations of $U(N)$, see \cite[Section 3.2]{BufGor} for the details. This connection is based on the structure of $\P$, which mimics the branching relations for Schur symmetric functions (these are Jack symmetric functions when $\theta = 1$). When $\theta = 1/2$ or $\theta = 2$ one should be able to make a similar construction to \cite[Section 3.2]{BufGor} for the real and quaternionic case, respectively, since when $\theta = 1/2$ or $2$, Jack symmetric functions become {\em zonal polynomials}, see \cite[Chapter VII]{Mac}. 
\end{remark}

When $k = N$ the measures in (\ref{S1PDef}) take the form
\begin{equation}\label{S1SigleLevel}
\mathbb{P}(\ell^N) = (Z_{N,N}^d)^{-1} \prod_{1\leq i<j \leq N} \frac{\Gamma(\ell^N_i - \ell^N_j + 1)\Gamma(\ell^N_i - \ell^N_j+ \theta)}{\Gamma(\ell^N_i - \ell^N_j)\Gamma(\ell^N_i - \ell^N_j +1-\theta)}  \prod_{i=1}^{N}w_N(\ell_i^N).
\end{equation}
The measures in (\ref{S1SigleLevel}) are called {\em discrete $\beta$-ensembles}, a term that was coined in \cite{BGG}, where these measures were introduced and extensively studied. In this way, one can view (\ref{S1PDef}) as a natural multi-level extension of the discrete $\beta$-ensembles from \cite{BGG}.\\

The goal of this paper is to derive {\em loop equations} for the (projections) of continuous and discrete $\beta$-corners processes, i.e. the measures in (\ref{S1Corners2}) and (\ref{S1PDef2}). Our loop equations can be thought of as functional equations that relate the joint cumulants of the Stieltjes transforms of the empirical measures on all the levels of a $\beta$-corners processes. The equations we derive for continuous $\beta$-corners processes generalize the loop equations (also called {\em Schwinger-Dyson equations}) for $\beta$-log gases, obtained in \cite[Theorem 3.1 and Theorem 3.2]{BoGu}, from a single-level to a multi-level setting. Similarly, the equations we derive for discrete $\beta$-corners processes generalize the loop equations (also called {\em Nekrasov equations} \cite{N}) for discrete $\beta$-ensembles, obtained in \cite{BGG}, from a single-level to a multi-level setting. 

For $\beta$-log gases loop equations have proved to be a very efficient tool in the study of global fluctuations,  see \cite{BoGu2, BoGu,JL, KS, S} and the references therein. Since their introduction loop equations have also been used to prove {\em local universality} for random matrices \cite{BEY, BFG}. In the discrete setup loop equations have been used to study global fluctuations in \cite{BGG} and edge fluctuations in \cite{huang} for discrete $\beta$-ensembles. It is our strong hope that the multi-level loop equations we derive in the present paper can also be used to study the global fluctuations of continuous and discrete $\beta$-corners processes. 

As mentioned earlier, our primary interest is on the discrete side. Specifically, we are interested in the study of ascending Jack processes, which are special cases of the Macdonald processes from \cite{BorCor}, and which fit into the setup of (\ref{S1PDef}). We have recently finished a project that utilizes the multi-level loop equations of the present paper to study the global fluctuations of a class of ascending Jack processes, which we call {\em $\beta$-Krawtchouk corners processes}, see \cite{DK24}. We have demonstrated that the asymptotic fluctuations of these models are governed by a 2d Gaussian field, which agrees with the one for Wigner matrices.

%-------------------------------------------------------------------------------------------------------------------------------------------------------------------------------------------------
% Section 1.2
%
%-------------------------------------------------------------------------------------------------------------------------------------------------------------------------------------------------
\subsection{Main results}\label{Section1.2} In this section we state our loop equations for discrete and continuous $\beta$-corners processes. In the discrete setting we refer to our equations as the {\em multi-level Nekrasov equations}, and in the continuous one we call them the {\em continuous multi-level loop equations}.

%-------------------------------------------------------------------------------------------------------------------------------------------------------------------------------------------------
% Section 1.2.1
%
%-------------------------------------------------------------------------------------------------------------------------------------------------------------------------------------------------
\subsubsection{Discrete setting}\label{Section1.2.1} In this section we state our multi-level Nekrasov equations -- this is Theorem \ref{MainNek1_intro} below. The equations take a different form depending on whether $\theta = 1$ or $\theta \neq 1$ and we forgo stating the equations for the case $\theta = 1$ until the main text, see Theorem \ref{MainNek2}. We also mention that Theorem \ref{MainNek1_intro} below is not the most general formulation of our equations, which can be found as Theorem \ref{MainNek1} in the main text.

\begin{theorem}\label{MainNek1_intro} Let $\P$ be a measure as in (\ref{S1PDef}) for $\theta > 0$, $\theta \neq 1$, $ N \in \mathbb{N}$, $k \in \llbracket 1, N \rrbracket$, $M \in \mathbb{Z}_{\geq 0}$. Let $\mathcal{M} \subseteq \mathbb{C}$ be an open set and $[- N \cdot \theta, M + 1 - \theta] \subseteq \mathcal{M}$. Suppose that there exist holomorphic functions $\Phi^+_N, \Phi^-_N$ on $\mathcal{M}$ such that 
\begin{equation}\label{S1eqPhiN}
\begin{split}
&\frac{w_N(x)}{w_N(x-1)}=\frac{\Phi_N^+(x)}{\Phi_N^-(x)} \mbox{ for $x \in [ -N\theta +1, M -\theta]$ and  $\Phi_N^-(-N\theta) = \Phi_N^+(M+1 - \theta) = 0$.}
\end{split}
\end{equation} 

Then the following functions $R_1(z)$, $R_2(z)$ are analytic in $\mathcal{M}$:
\begin{equation}\label{eq:mN1_gen_intro}
\begin{split}
R_1 (z):= \hspace{2mm}&\Phi_N^-(z) \cdot \mathbb{E} \left[ \prod_{p = 1}^N\frac{z- \ell^N_p -\theta}{z - \ell^N_p} \right] + \Phi_N^+(z)\cdot \mathbb{E} \left[ \prod_{p = 1}^{k}\frac{z- \ell^{k}_p + \theta - 1}{z - \ell^{k}_p - 1} \right]   \\
&+ \frac{\theta \Phi_N^+(z)}{1-\theta}  \sum\limits_{j=k+1}^{N}  \mathbb{E} \left[  \prod_{p = 1}^{j} \frac{z- \ell^{j}_p  -\theta}{z - \ell_p^{j} - 1}  \prod_{p = 1}^{j-1} \frac{z-  \ell_p^{j-1} + \theta - 1}{z -  \ell_p^{j-1}} \right];
\end{split}
\end{equation}

\begin{equation}\label{eq:mN2_gen_intro}
\begin{split}
&R_2 (z):= \Phi_N^+(z)\cdot \mathbb{E}\hspace{-1mm} \left[ \prod_{p = 1}^N\frac{z- \ell^N_p +\theta-1}{z - \ell^N_p-1} \right] \hspace{-1mm} + \Phi_N^-(z)  \cdot \mathbb{E}\hspace{-1mm} \left[ \prod_{p = 1}^{k}\frac{z- \ell^{k}_p+(N-k-1)\theta }{z - \ell^{k}_p +(N-k)\theta} \right] \hspace{-1mm}  \\
& + \frac{\theta  \Phi_N^-(z) }{1-\theta} \sum\limits_{j=k+1}^{N}  \mathbb{E}\left[  \prod_{p = 1}^{j} \frac{z- \ell^{j}_p+(N-j+1)\theta -1}{z - \ell_p^{j} +(N-j)\theta}    \prod_{p = 1}^{j-1}  \frac{z-\ell_p^{j-1} +(N-j)\theta}{z -  \ell_p^{j-1}+(N-j+1) \theta-1} \right].
\end{split}
\end{equation}
\end{theorem}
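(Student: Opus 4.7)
The functions $R_1$ and $R_2$ are a priori meromorphic on $\mathcal{M}$: since $\XX$ is finite once $N$ and $M$ are fixed, each expectation is a rational function of $z$, and the holomorphic factors $\Phi_N^{\pm}$ do not introduce new singularities. The only possible poles of $R_1$ inside $\mathcal{M}$ are at points $z_0$ of the form $\ell^j_p$ or $\ell^j_p + 1$ (for $j \in \llbracket k, N \rrbracket$, $p \in \llbracket 1, j \rrbracket$ and some configuration $\ell$ of positive probability); the analogous statement for $R_2$ comes after the global shift $\ell^j_p \mapsto \ell^j_p - (N-j)\theta$. Proving analyticity thus reduces to checking that the residue at each candidate pole vanishes. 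The boundary candidates $z_0 = -N\theta$ and $z_0 = M+1-\theta$ are killed outright by the hypothesis $\Phi_N^-(-N\theta) = \Phi_N^+(M+1-\theta) = 0$, which annihilates the only potentially singular term there.

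For a bulk pole $z_0$, I would collect all contributions to the residue of $R_1$ at $z_0$. A configuration $\ell$ contributes from term $1$ if $\ell^N_p = z_0$ for some $p$; from term $2$ if $\ell^k_p = z_0 - 1$ for some $p$; and from the $j$-th summand of term $3$ via the factors $(z - \ell^j_p - 1)^{-1}$ or $(z - \ell^{j-1}_p)^{-1}$. After computing these residues explicitly (each is a product of elementary rational evaluations times $\P(\ell)$ and $\Phi_N^{\pm}(z_0)$), the cancellation is established by pairing each configuration $\ell$ with $\ell^j_p = z_0$ against the configuration $\ell'$ obtained by decrementing that single particle to $z_0 - 1$ (whenever this preserves the interlacing $\ell^N \succeq \cdots \succeq \ell^k$). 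The ratio $\P(\ell)/\P(\ell')$ is read directly from (\ref{S1PDef})--(\ref{S1PDef3}): the $w_N$ ratio supplies $\Phi_N^+/\Phi_N^-$ via (\ref{S1eqPhiN}), while the Gamma-function ratios in $\I(\ell^{j+1}, \ell^j)\,\I(\ell^j, \ell^{j-1})$ produce exactly the rational factors in $\theta$ and in the adjacent particles $\ell^{j \pm 1}_q$ that appear in (\ref{eq:mN1_gen_intro}). The $\theta/(1-\theta)$ prefactor in front of the $j$-sum of term $3$ is precisely what is needed to balance the two $\I$-factors, and when a would-be shift $\ell^j_p \to \ell^j_p - 1$ violates the interlacing the resulting boundary contribution cancels against the adjacent summand (indexed by $j-1$ or $j+1$), producing the telescoping that is the hallmark of multi-level loop equations. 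The argument for $R_2$ is entirely analogous, with the shift direction reversed and the global offset $(N-j)\theta$ tracked throughout.

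The most natural structural organization is by induction on $N - k$: the base case $k = N$ reduces to the single-level Nekrasov equations of \cite{BGG} applied to (\ref{S1SigleLevel}) with weight $w_N$, and for the inductive step I would condition on $(\ell^N, \ldots, \ell^{k+1})$ (equivalently, invoke Remark \ref{S1ProjR}) and exploit the fact that the conditional law of $\ell^k$ has an effective single-level $\beta$-ensemble structure determined by $\ell^{k+1}$; applying a local Nekrasov identity there and then averaging should produce the additional $j = k+1$ summand in term $3$. The main obstacle is the combinatorial bookkeeping when several particles simultaneously approach the pole location (in which case one needs to expand to higher order and collect cross terms carefully), together with keeping track of the dual shift structure of $R_2$ encoded by $z + (N-j)\theta$. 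The restriction $\theta \neq 1$ is precisely to avoid the pole of the $1/(1-\theta)$ prefactor, and the $\theta = 1$ degeneration is handled separately in Theorem \ref{MainNek2}.
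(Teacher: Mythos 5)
Your overall frame (reduce to a residue computation, note the candidate poles, kill the two boundary points $-N\theta$ and $M+1-\theta$ using the vanishing of $\Phi_N^{\mp}$, and show each bulk residue vanishes by pairing configurations against shifted configurations) is the same skeleton as the paper's, which deduces the statement from Theorem \ref{MainNek1} (take $w_j \equiv 1$ for $j \in \llbracket k, N-1 \rrbracket$, so $\R_1 = \R_2 = 0$) and proves that theorem by exactly such a residue calculation. The genuine gap is in your pairing. You propose to cancel the contribution of a configuration with $\ell^j_p = z_0$ against the configuration obtained by decrementing that \emph{single} particle, and when interlacing is violated you assert the leftover ``cancels against the adjacent summand, producing telescoping.'' That last assertion is precisely the hard part and it is not what happens level-by-level: whenever the particle immediately below is aligned ($\lambda^{j-1}_i = \lambda^j_i$), the single decrement leaves the state space, and the paper's bijection $\b$ (Steps 3--5 of the proof of Theorem \ref{MainNek1}) instead decrements an entire maximal vertical string of aligned particles on levels $\tilde n, \dots, n$; the term canceling the pole coming from the $(n+1)$-st summand is then the pole of the $\tilde n$-th summand (or of the level-$k$ term when $\tilde n = k$), where $\tilde n$ can be far from $n$, not adjacent. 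Verifying the exact cancellation requires the string-shift ratio formulas (\ref{TR1})--(\ref{TR4}) matched against (\ref{eq:phi}); the prefactor $\theta/(1-\theta)$ does not by itself ``balance the two $\I$-factors,'' and your sketch supplies neither the correct pairing nor the identity check.

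Two further gaps. First, you leave open the possibility of higher-order poles and propose to ``expand to higher order''; in fact Step 1 of the paper's proof shows every pole is simple --- the only danger is the two products inside a single $j$-summand sharing a pole, which can occur only for special rational $\theta$ and is then neutralized by an extra vanishing numerator factor coming from $\ell^{j-1}_{q-1} = \ell^j_q + \theta$ --- and without establishing simplicity your residue bookkeeping is not even well posed. Second, the induction on $N-k$ is not a proof as stated: the conditional law of $\ell^k$ given $\ell^{k+1}$ has weight ratios whose zeros and poles sit at the random positions $\ell^{k+1}_q$, so any ``local Nekrasov identity'' would need configuration-dependent analytic factors together with an argument that the error terms remain analytic after averaging; ``should produce the additional $j = k+1$ summand'' is an expectation, not a derivation, and making it rigorous amounts to redoing the multi-level residue analysis. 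The paper avoids this by proving Theorem \ref{MainNek1} directly for all $k$ at once.
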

\begin{proof} Theorem \ref{MainNek1_intro} is a special case of Theorem \ref{MainNek1} when $w_j(x) \equiv 1$ for $j \in \llbracket k, N - 1\rrbracket$, in which case we remark that the conditions of Theorem \ref{MainNek1} are satisfied with $\phi^{N+1}_1(z) = \phi^{j}_2(z) = \Phi_N^-(z)$ and $\phi^{j}_1(z) = \phi^{N+1}_2(z) = \Phi^+_N(z)$ for $j \in \llbracket k, N \rrbracket$. We also mention that $\R_1(z) = \R_2(z) = 0$ in Theorem \ref{MainNek1} in view of the second equality in (\ref{S1eqPhiN}), cf. Remark \ref{RemBoundary}.
\end{proof}

We call the expressions in (\ref{eq:mN1_gen_intro}) and (\ref{eq:mN2_gen_intro}) equations because once we multiply both sides by an analytic function and integrate around a closed contour the integrals involving $R_{1}$ and $R_2$ will give zero due to analyticity. In Section \ref{Section3} we explain how to use Theorem \ref{MainNek1_intro}, or rather its generalizations Theorems \ref{MainNek1} and \ref{MainNek2} in the main text, to obtain integral equations that relate the joint cumulants of the Stieltjes transforms of the empirical measures on the different levels. Specifically, in Lemma \ref{CrudeCumExp} we obtain integral equations that relate the joint cumulants of the random analytic functions
$$G_L^j(z) = \sum_{i =1}^j \frac{1}{z - \ell_i^j/L} \mbox{ for $j \in \llbracket k, N \rrbracket$ },$$
evaluated at different complex points. Here, $L > 0$ is an arbitrary positive scaling parameter. 

Theorem \ref{MainNek1_intro} is a multi-level analogue of the discrete loop equations obtained in \cite{BGG, DK2020}, which in turn go back to the work of Nekrasov \cite{N}. Our proof is similar in spirit to the ones in \cite{BGG, DK2020, N}, essentially performing a careful residue cancellation. This cancellation is a bit trickier in the multi-level setting and the closest analogue we could find is \cite[Appendix A]{NT22}. The main difficulty lies in the construction of  $R_{1}$ and $R_2$. In \cite{BGG} the construction comes from the work \cite{N}; however, we are not aware of a proper analogue for $R_1$ and $R_2$ in the multi-level settings in the physics literature. We also mention that recently a different generalization of the loop equations was proposed in \cite{GH22}, called {\em dynamical loop equations}, see also \cite{JH20}. At this time it is unclear whether and how our multi-level equations relate to the dynamical loop equations algebraically. In \cite[Section 1.3]{DK24} we explain that from a probabilistic point of view our loop equations provide access to the ``level'' integrals of the Stieltjes transforms, while the dynamical loop equations describe how one level changes if the previous is fixed, i.e. they provide access to the ``level'' derivatives. 

We find it remarkable that out multi-level equations in Theorems \ref{MainNek1} and \ref{MainNek2} have a somewhat different form depending on whether $\theta \neq 1$ or $\theta = 1$. This is not the case for the single-level Nekrasov equations from \cite{BGG}. We believe that it was crucial for us to find the correct observables in the Theorem \ref{MainNek1} for general $\theta \neq 1$ first and then specialize to $\theta=1$, which made the derivation of the equations notably harder.

%-------------------------------------------------------------------------------------------------------------------------------------------------------------------------------------------------
% Section 1.2.2
%
%-------------------------------------------------------------------------------------------------------------------------------------------------------------------------------------------------
\subsubsection{Continuous setting}\label{Section1.2.2}  In this section we state our continuous multi-level loop equations.

\begin{theorem}\label{ManyLevelLoopThm_intro}
  Fix $N \in \mathbb{N}$, $a_-,a_+ \in \mathbb{R}$ with $a_- < a_+$ and $\theta = \beta/2> 0$. In addition, let $V(z)$ be a holomorphic function in a complex neighborhood $U$ of $[a_-,a_+]$, which is real-valued on $[a_-,a_+]$. Let $k \in \llbracket 1, N \rrbracket$ and let $(Y^N, \dots, Y^k)$ be a random vector taking value in $\mathbb{R}^N \times \cdots \times \mathbb{R}^{k}$, whose distribution has density $f_{N,k}$ as in (\ref{S1Corners2}).

We think of the vector $(Y^N, \dots, Y^k)$ as encoding the locations of $N-k+1$ sets of particles, indexed by $j \in \llbracket k, N \rrbracket$, and for $z \in \mathbb{C} \setminus [a_-, a_+]$ and $j \in \llbracket k, N \rrbracket$ we denote the Stieltjes transform of the $j$-th particle configuration by
\begin{equation}\label{S6GcontDef_intro}
\mathcal{G}^j(z) = \sum_{i = 1}^j \frac{1}{z - Y^j_i}.
\end{equation}
We let $m_k, \dots, m_N \in \mathbb{Z}_{\geq 0}$ and set $\mathfrak{M} = \{ (r,f): r \in \llbracket k, N \rrbracket \mbox{ and } f \in \llbracket 1, m_r \rrbracket\}$. For any $ v_r^f \in \mathbb{C} \setminus [a_-, a_+]$ for $(r,f) \in J \subseteq \mathfrak{M}$ and bounded complex random variable $\xi$ we write $\K(\xi; J)$ for the joint cumulant of the random variables $\xi$ and $\{ \mathcal{G}^j(v_f^r) : (r,f) \in J \}$, see (\ref{CumDef}) for the definition.

\vskip 2pt
Then for $v \in \mathbb{C} \setminus [a_-, a_+]$
\begin{equation}\label{TLRankmn_intro}
\begin{split} 
  & \oint_{\Gamma}   \frac{(z-a_-)(z- a_+)  }{2 \pi \i  (z - v)}  \cdot \left[ \frac{\K \left( \SS(z); \mathfrak{M} \right)}{2} +\sum_{(r,f) \in \mathfrak{M}}  \frac{\K \left( \mathcal{G}^r(z) ; \mathfrak{M} \setminus \{ (r,f)\} \right)}{(z - v^r_f )^2} \right] = 0,
\end{split}
\end{equation}
where
\begin{equation}
  \begin{split}
    \SS(z)= \hspace{2mm} &-2N\theta \partial_z V(z) \cdot \mathcal{G}^N(z) +  \theta \partial_z \mathcal{G}^N(z) + \theta [\mathcal{G}^N(z) ]^2 +  (\theta - 2 ) \partial_z \mathcal{G}^k(z) +  \theta[ \mathcal{G}^k(z) ]^2 \\
    &+ \sum\limits_{j=k+1}^{N}\left(-(\theta + 1)  \partial_z \mathcal{G}^j(z) +    (1-\theta)[ \mathcal{G}^j(z) - \mathcal{G}^{j-1}(z) ]^2   +  (1 - \theta)  \partial_z \mathcal{G}^{j-1}(z)\right).
    \end{split}
\end{equation}
and $\Gamma$ is a positively oriented contour, which encloses the segment $[a_-, a_+]$, is contained in $U$ and excludes the points $\{ v_f^r: (r,f) \in \mathfrak{M} \}$ and $v$.
\end{theorem}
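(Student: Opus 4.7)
My plan is to derive (\ref{TLRankmn_intro}) via a multi-level Schwinger--Dyson identity obtained by integrating by parts on the joint density $f_{N,k}$ in (\ref{S1Corners2}). The main device is the rational vector field
$$\phi(y) = \frac{(y - a_-)(y - a_+)}{z - y}, \qquad z \in \mathbb{C} \setminus [a_-,a_+],$$
and the simultaneous change of variables $y^j_a \mapsto y^j_a + \epsilon\,\phi(y^j_a)$ applied to every coordinate of $(Y^N, \ldots, Y^k)$. Since $\phi$ is smooth on $[a_-, a_+]$ with $\phi(a_\pm) = 0$, for small $|\epsilon|$ this map is an order-preserving diffeomorphism of $[a_-,a_+]$ that sends the interlacing region $\mathcal{G}_{N,k}$ onto itself, so no boundary terms arise. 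Differentiating the invariance of $\int F\,f_{N,k}$ at $\epsilon = 0$ yields, for any smooth test function $F$,
$$\mathbb{E}[\mathcal{L}_z F + F\cdot \mathcal{L}_z \log f_{N,k} + F \cdot \mathcal{D}_z] = 0, \qquad \mathcal{L}_z = \sum_{j,a} \phi(y^j_a)\partial_{y^j_a},\qquad \mathcal{D}_z = \sum_{j,a}\phi'(y^j_a).$$

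The second step is to compute $\mathcal{L}_z\log f_{N,k} + \mathcal{D}_z$ in closed form. The crucial algebraic inputs,
$$\frac{\phi(y) - \phi(y')}{y - y'} = -1 + \frac{(z-a_-)(z-a_+)}{(z-y)(z-y')},\qquad \phi'(y) = -1 + \frac{(z-a_-)(z-a_+)}{(z-y)^2},$$
collapse every pairwise sum arising from $\partial_{y^j_a}\log f_{N,k}$ -- those coming from the intra-level Vandermonde factors and the inter-level interactions $|y^j_a - y^{j+1}_b|^{\theta - 1}$ -- into a combinatorial constant plus a bilinear combination of $\mathcal{G}^j(z)$, $\mathcal{G}^j(z)\mathcal{G}^{j\pm 1}(z)$, and $\partial_z \mathcal{G}^j(z)$, all weighted by $(z-a_-)(z-a_+)$. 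The potential contribution $-N\theta\sum_a \phi(y^N_a)V'(y^N_a)$ is further split via the telescoping identity $\frac{\psi(y)}{z - y} = \frac{\psi(z)}{z - y} - \frac{\psi(y) - \psi(z)}{y - z}$ into the singular piece $-N\theta(z-a_-)(z-a_+)V'(z)\mathcal{G}^N(z)$ plus a term that is holomorphic in $z$ on a neighborhood of $[a_-,a_+]$. Matching coefficients identifies
$$\mathcal{L}_z \log f_{N,k} + \mathcal{D}_z = (z-a_-)(z-a_+)\cdot\tfrac{1}{2}\SS(z) + (\text{holomorphic in }z\text{ near }[a_-,a_+]).$$

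To upgrade expectations to joint cumulants I would specialize to $F = \prod_{(r,f)\in\mathfrak{M}}\bigl(\mathcal{G}^r(v^r_f) - \mathbb{E}\mathcal{G}^r(v^r_f)\bigr)$ and combine the basic identity applied to this $F$ with the companion computation
$$\mathcal{L}_z \mathcal{G}^r(v) = -\phi(v)\,\partial_v \mathcal{G}^r(v) + \mathcal{G}^r(v) - \frac{(z-a_-)(z-a_+)}{(z-v)^2}\bigl[\mathcal{G}^r(v) - \mathcal{G}^r(z)\bigr],$$
obtained from the same partial-fraction identity applied to $\phi(y)/(v - y)^2$. The only piece of $\mathcal{L}_z\mathcal{G}^r(v)$ that is not holomorphic in $z$ in the interior of $\Gamma$ is the term $(z-a_-)(z-a_+)\mathcal{G}^r(z)/(z-v)^2$; after the Leibniz rule for $\mathcal{L}_z F$ and translation from products of centered variables to joint cumulants -- carried out by induction on $|\mathfrak{M}|$, with lower-order cumulant remainders absorbed into holomorphic pieces via the loop equations already established for proper subsets of $\mathfrak{M}$ -- one obtains that
$$(z-a_-)(z-a_+)\left[\tfrac{1}{2}\K(\SS(z);\mathfrak{M}) + \sum_{(r,f)\in\mathfrak{M}}\frac{\K(\mathcal{G}^r(z);\mathfrak{M}\setminus\{(r,f)\})}{(z-v^r_f)^2}\right]$$
is holomorphic in $z$ throughout the interior of $\Gamma$. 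Multiplying by $1/(z-v)$, whose only pole at $v$ lies outside $\Gamma$, and applying Cauchy's theorem yields (\ref{TLRankmn_intro}).

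The main obstacle I anticipate is the combinatorial bookkeeping required at two points: first, verifying that the explicit coefficients of $(\mathcal{G}^j)^2$, $\mathcal{G}^j\mathcal{G}^{j-1}$, $\partial_z \mathcal{G}^j$, and $V'(z)\mathcal{G}^N(z)$ produced by the sum-collapse step assemble into exactly $\SS(z)/2$ -- in particular that the cross-level factors $|y^j_a - y^{j+1}_b|^{\theta-1}$ combine with the intra-level Vandermondes to generate the characteristic squared-difference form $(1-\theta)[\mathcal{G}^j - \mathcal{G}^{j-1}]^2$; and second, ensuring that the induction on $|\mathfrak{M}|$ closes cleanly, with every moment-to-cumulant remainder reducing via a loop equation for a smaller index set to a holomorphic function of $z$. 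A secondary subtlety is a careful justification that the change of variables produces no hidden boundary contribution at the interlacing walls when $\theta < 1$, where the density carries integrable singularities of order $|\Delta|^{\theta-1}$; this should follow from the order-preservation of $T_\epsilon$ combined with a standard cutoff/approximation argument.
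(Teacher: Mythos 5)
Your route is genuinely different from the paper's. The paper never integrates by parts in the continuum: it proves the multi-level Nekrasov equations for the discrete measures (Theorems \ref{MainNek1}, \ref{MainNek2}), converts them into cumulant integral identities via deformed discrete measures (Lemma \ref{CrudeCumExp}), proves that the continuous $\beta$-corners process is a diffuse scaling limit of the discrete one (Proposition \ref{prop_cont_limit}), and obtains (\ref{TLRankmn_intro}) by Taylor expansion in $1/L$ and passage to the limit. You instead propose a direct continuum Schwinger--Dyson derivation at fixed $N$, generalizing the Borot--Guionnet change-of-variables argument to the multi-level density $f_{N,k}$. Your core algebra is correct: with $Q(z)=(z-a_-)(z-a_+)$ the identities for $\frac{\phi(y)-\phi(y')}{y-y'}$ and $\phi'(y)$ do collapse the intra-level exponents ($1$ on levels $N$ and $k$, $2-2\theta$ on intermediate levels) and the cross-level exponents $\theta-1$ into exactly $Q(z)\SS(z)/2$ plus $z$-holomorphic terms (I checked the coefficients of $[\mathcal{G}^j]^2$, $\mathcal{G}^j\mathcal{G}^{j-1}$, $\partial_z\mathcal{G}^j$ and the potential term), and your formula for $\mathcal{L}_z\mathcal{G}^r(v)$ is right, with $\frac{Q(z)}{(z-v)^2}\mathcal{G}^r(z)$ being its only non-holomorphic piece inside $\Gamma$, matching the second sum in (\ref{TLRankmn_intro}). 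What your approach buys is a self-contained continuum proof avoiding Jack-polynomial machinery and the weak-convergence step; what the paper's approach buys is that the discrete equations (its real objective) come first and the analytic subtleties of singular densities are absorbed into purely algebraic residue cancellations.

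Two points in your sketch still require real work before this is a proof. First, the justification of the variational step: $\phi$ is complex-valued, so the "order-preserving diffeomorphism of $[a_-,a_+]$" argument needs either $z$ real outside $[a_-,a_+]$ followed by analytic continuation, or a split into real and imaginary parts; moreover, term-by-term integration by parts is unavailable for every $\theta\neq 1$, since individual expectations such as $\mathbb{E}\bigl[|y^j_a-y^{j+1}_b|^{-1}\bigr]$ diverge when $\theta<1$ and the intra-level analogues diverge when $\theta>1$ (the exponents $\theta-1$, resp. $2-2\theta$, are then in $(-1,0)$). You must therefore keep the grouped change-of-variables form, where the symmetrized difference quotients are bounded because all coordinates move by the same field, and justify differentiation at $\epsilon=0$ by domination; your proposed cutoff argument should be written for both regimes, not only $\theta<1$. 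Second, the moment-to-cumulant induction is only asserted: to make it close you should strengthen the induction hypothesis from the vanishing of the contour integral to the holomorphy of $Q(z)\bigl[\tfrac12\K(\SS(z);J)+\sum_{(r,f)\in J}\K(\mathcal{G}^r(z);J\setminus\{(r,f)\})(z-v^r_f)^{-2}\bigr]$ inside $\Gamma$ for all $J\subsetneq\mathfrak{M}$, so that the partition remainders from expanding $\mathbb{E}\bigl[\SS(z)\prod(\mathcal{G}-\mathbb{E}\mathcal{G})\bigr]$ and $\mathbb{E}\bigl[\mathcal{G}^r(z)\prod(\mathcal{G}-\mathbb{E}\mathcal{G})\bigr]$ cancel blockwise against lower-rank equations; this is precisely where the factor $\tfrac12$ and the exact form of the second sum must be confirmed, and it is the bulk of the remaining effort.
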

\begin{remark} In simple words, equation (\ref{TLRankmn_intro}) provides an integral equation that relates the joint cumulants (of different orders) of the Stieltjes transforms $\mathcal{G}^j(z)$ and their first order derivatives $\partial_z \mathcal{G}^j(z)$ for $j \in \llbracket k, N \rrbracket$, evaluated at different complex points. 
\end{remark}

\begin{remark}
Theorem \ref{ManyLevelLoopThm_intro} should be thought of as a multi-level generalization of \cite[Theorem 3.1 and Theorem 3.2]{BoGu} for the measures in (\ref{S1Corners2}). Specifically, when $N = k$ equation (\ref{TLRankmn_intro}) implies \cite[Theorem 3.1 and Theorem 3.2]{BoGu} as we explain here.

When $N = k$ equation (\ref{TLRankmn_intro}) simplifies to
\begin{equation*}
\begin{split} 
&0 =  \oint_{\Gamma} dz \frac{ N \theta (z-a_-)(z- a_+) [- \partial_z V(z)] }{2 \pi \i (z - v)} \cdot \K \left( \mathcal{G}^N(z);\mathfrak{M} \right) +\sum_{(r,f) \in \mathfrak{M}}  \oint_{\Gamma} dz \frac{  (z-a_-)(z- a_+)}{2 \pi \i (z - v) (v^r_f - z)^2} \\
& \times \K \left( \mathcal{G}^r(z) ; \mathfrak{M} \setminus \{ (r,f)\} \right) +\oint_{\Gamma}   \frac{dz(z-a_-)(z- a_+)  }{2 \pi \i  (z - v)}  \K\left(  \theta [\mathcal{G}^N(z) ]^2 + (\theta - 1) \partial_z \mathcal{G}^N(z)  ; \mathfrak{M} \right).
\end{split}
\end{equation*}
We apply Malyshev's formula (see (\ref{Mal3}) in the main text) to the term $\K\left(  \theta [\mathcal{G}^N(z) ]^2  ; \mathfrak{M} \right)$ and compute the last integral as the sum of the (minus) residues at $z = v$  and infinity. Further computing the sum of integrals on the first line as the sum of the (minus) residues at $z= v$ and $z = v_f^r$ brings us to \cite[Theorem 3.1 and Theorem 3.2]{BoGu}.
\end{remark}

The proof of Theorem \ref{ManyLevelLoopThm_intro} is given in Section \ref{Section4.1} and relies on two inputs -- Lemma \ref{CrudeCumExp} and Proposition \ref{prop_cont_limit}. As mentioned in Section \ref{Section1.2.1}, Lemma \ref{CrudeCumExp} is a direct consequence of our multi-level Nekrasov equations and provides integral equations, relating the joint cumulants of 
$$G_L^j(z) = \sum_{i =1}^j \frac{1}{z - \ell_i^j/L} \mbox{ for $j \in \llbracket k, N \rrbracket$ }$$
at various complex points. On the other hand, Proposition \ref{prop_cont_limit} shows that the random vector $(Y^N, \dots, Y^k)$ in Theorem \ref{ManyLevelLoopThm_intro} is the weak diffuse scaling limit for a suitable sequence, indexed by $L$, of random vectors $(\ell^{N}, \dots, \ell^k)$ with law (\ref{S1PDef}). The integral equations in (\ref{TLRankmn_intro}) are then obtained as a limit of the integral equations in Lemma \ref{CrudeCumExp} using the weak convergence of Proposition \ref{prop_cont_limit}.

\subsection*{Outline}
Section \ref{Section2} presents the proof of our Nekrasov equations. The equations take a different form depending on whether $\theta \neq 1$, see Theorem \ref{MainNek1}, and when $\theta = 1$, see Theorem \ref{MainNek2}. Section \ref{Section3} introduces a general method of deriving integral equations that relate a rich class of multi-level observables for discrete $\beta$-corners processes. One such set of integral equations is derived in Lemma \ref{CrudeCumExp}. In Section \ref{Section4.1} we summarize some basic statements about continuous $\beta$-corners processes and in Section \ref{Section4.2} we explain the relationship between discrete $\beta$-corners processes and Jack symmetric functions. In Section \ref{Section4.3} we prove that continuous $\beta$-corners processes are diffuse scaling limits of discrete ones, see Proposition \ref{prop_cont_limit}. Finally, in Section \ref{Section4.4} we use Lemma \ref{CrudeCumExp} and Proposition \ref{prop_cont_limit} to obtain the continuous loop equations from Theorem \ref{ManyLevelLoopThm_intro}.

\subsection*{Acknowledgments} The authors would like to thank Alexei Borodin and Vadim Gorin for many useful comments on earlier drafts of the paper. AK would also like to thank Nikita Nekrasov and Oleksandr Tsymbaliuk for helpful discussions. ED is partially supported by the Minerva Foundation Fellowship and the NSF grant DMS:2054703. AK is partially supported by NSF grant DMS:2153958.

%-------------------------------------------------------------------------------------------------------------------------------------------------------------------------------------------------
%    Section 2
%
%-------------------------------------------------------------------------------------------------------------------------------------------------------------------------------------------------
\section{General setup and the equations}\label{Section2} In this section we present the main result we establish for discrete $\beta$-corners processes, which we call the {\em multi-level Nekrasov equations}. The equations have different formulations when $\theta \neq 1$ -- see Theorem \ref{MainNek1}, and when $\theta = 1$ -- see Theorem \ref{MainNek2}.

%-------------------------------------------------------------------------------------------------------------------------------------------------------------------------------------------------
%    Section 2.1
%
%-------------------------------------------------------------------------------------------------------------------------------------------------------------------------------------------------
\subsection{Preliminary computations}\label{Section2.1} In this section we introduce some notation, which is required to formulate our multi-level Nekrasov equations in the next section and perform some preliminary computations, which will be required in their proof. In the remainder of this section we fix $N \in \mathbb{N}$, $M \in \mathbb{Z}_{\geq 0}$, $k \in \llbracket 1, N \rrbracket$ and $\theta  > 0$. We recall from Section \ref{Section1} that we write $\llbracket a,b \rrbracket = \{a, a+1, \dots, b\}$ for two integers $b \geq a$, with the convention that $\llbracket a,b \rrbracket = \emptyset$ if $a > b$.

Let $\XX$ be as in (\ref{S1GenState}). We consider complex measures on $\XX$ of the form
\begin{equation}\label{S2PDef}
\P(\ell^N, \dots, \ell^{k}) = Z_N^{-1} \cdot H^t(\ell^N) \cdot H^b(\ell^k)
\cdot \prod_{j = k}^{N-1}  \I(\ell^{j+1}, \ell^j), \mbox{ where }
\end{equation}
\begin{equation}\label{S2PDef2}
\begin{split}
\I(\ell^{j+1}, \ell^{j}) = & \prod_{1 \leq p < q \leq j+1}\frac{\Gamma(\ell^{j+1}_p - \ell^{j+1}_q + 1 - \theta)}{\Gamma(\ell^{j+1}_p - \ell^{j+1}_q) } \cdot \prod_{1 \leq p < q \leq j} \frac{\Gamma(\ell^{j}_p - \ell^{j}_q + 1)}{\Gamma(\ell^{j}_p - \ell^{j}_q + \theta)} \\
&\times\prod_{1 \leq p < q \leq j+1} \frac{\Gamma(\ell^{j}_p -
  \ell^{j+1}_q)}{ \Gamma(\ell^{j}_p - \ell^{j+1}_q + 1 - \theta)}  \cdot
\prod_{1 \leq p \leq q \leq j} \frac{\Gamma(\ell^{j+1}_p -
  \ell^{j}_q + \theta)}{\Gamma(\ell^{j+1}_p - \ell^{j}_q + 1)} \prod\limits_{p=1}^{j}w_j(\ell_p^{j}),
\end{split}
\end{equation}
\begin{equation}\label{S2PDef3}
\begin{split}
H^t(\ell^N) = \prod_{1 \leq p < q \leq N} &\frac{\Gamma(\ell^N_p - \ell^N_q + 1)}{\Gamma(\ell^N_p - \ell^N_q + 1 - \theta)}  \prod_{p = 1}^N w_N(\ell^N_p) , \hspace{2mm} H^b(\ell^k) =   \prod_{1 \leq p < q \leq k} \frac{\Gamma(\ell^k_p - \ell^k_q + \theta)}{\Gamma(\ell^k_p - \ell^k_q)}.
\end{split}
\end{equation}
In equations (\ref{S2PDef2}) and (\ref{S2PDef3}) we have that $w_j(\cdot)$ are non-vanishing complex-valued functions on the interval $[-j \cdot  \theta, M - \theta]$ for $j \in \llbracket k, N \rrbracket $ and we have assumed that the sum of $H^t(\ell^N) \cdot H^b(\ell^k) \cdot \prod_{j = k}^{N-1}  \I(\ell^{j+1}, \ell^j)$ over $\XX$ is a non-zero complex number $Z_N$ so that the division by this quantity in (\ref{S2PDef}) is well-defined. In this way $\P$ defines a complex measure on $\XX$ of total mass $1$. If $w_j(\cdot)$ are positive functions on $[-j \cdot  \theta, M - \theta]$ for $j \in \llbracket k, N \rrbracket $ then $Z_N > 0$ and $\P$ defines a probability measure on $\XX$. 

We mention that the measure in (\ref{S2PDef}) is a generalization of the one in (\ref{S1PDef}), the difference being that we now have a weight function $w_j(\cdot)$ attached to each level $j \in \llbracket k, N \rrbracket$, while in (\ref{S1PDef}) we only had the weight $w_N(\cdot)$ attached to the $N$-th level. 
We will formulate our multi-level Nekrasov equations for the more general measures in (\ref{S2PDef}). \\

We next proceed to derive formulas for ratios of the form $\P(\tilde{\ell})/\P({\ell}),$ where $\tilde{\ell}$ is obtained from $\ell$ by shifting a few of the particle locations by $1$. The formulas we will require are presented in two quadruples of equations, namely (\ref{TR1}), (\ref{TR2}), (\ref{TR3}) and (\ref{TR4}), and (\ref{UR1}), (\ref{UR2}), (\ref{UR3}) and (\ref{UR4}). In order to illustrate how they are derived we first explain how the functions $H^t, H^b$ and $I$ from (\ref{S2PDef2}) and (\ref{S2PDef3}) behave under shifts.

 In what follows we fix $\ell \in \XX$. If $s \in \mathbb{R}$, $j \in \llbracket k, N \rrbracket$, $i \in \llbracket 1, j \rrbracket $, $\ell_i^j = s$ we let $\tilde{\ell}^j(i)$ be 
$$\tilde{\ell}^j_p(i) = \ell^j_p \mbox{ for $p \in \llbracket 1, j \rrbracket \setminus \{i \}$ and } \tilde{\ell}_i^j(i)= \ell_i^j - 1 = s-1.$$
Assuming that $\tilde{\ell}^N(i) \in \mathbb{W}^{\theta,M}_{N} $ we have 
\begin{equation}\label{HTRat}
\begin{split}
&\frac{H^t(\tilde \ell^N(i))}{H^t(\ell^N)} =  \prod_{p = 1}^{i-1} \frac{ s -\ell^N_p -1}{s - \ell^N_p + \theta - 1} \cdot \prod_{p = i+1}^N\frac{s-\ell^N_p - \theta}{s-\ell^N_p} \cdot \dfrac{w_N(s-1)}{w_N(s)},
\end{split}
\end{equation}
where we used the functional equation for the gamma function $\Gamma(z+1) = z \Gamma(z)$. Analogously, assuming that $1\leq i \leq k$ and $\tilde{\ell}^k(i) \in \mathbb{W}^{\theta,M}_{k}$ we have
\begin{equation}\label{HBRat}
\begin{split}
&\frac{H^b(\tilde \ell^k(i))}{H^b(\ell^k)} = \prod_{p = 1}^{i-1} \frac{s-\ell^k_p -\theta}{s- \ell^k_p } \cdot \prod_{p = i+1}^{k}\frac{s-\ell^k_p- 1}{s-\ell^k_p+ \theta - 1}.
\end{split}
\end{equation}
Assuming that $j \in \{\max(k, i-1), \dots, N-1\}$ and $\tilde{\ell}^{j+1}(i) \in \mathbb{W}^{\theta,M}_{j+1}$ we have
\begin{equation}\label{ILeftRat}
\begin{split}
&\frac{\I(\tilde \ell^{j+1}(i), \ell^{j})}{\I(\ell^{j+1}, \ell^{j})} =  \prod_{p= 1}^{i-1} \frac{s-\ell^{j+1}_p +\theta - 1}{s - \ell_p^{j+1} }  \prod_{p = i+1}^{j+1} \frac{s-\ell_p^{j+1} - 1}{s-\ell_p^{j+1}- \theta}  \cdot \prod_{p = 1}^{j} \frac{s - \ell_p^j}{s - \ell_p^j + \theta - 1}.
\end{split}
\end{equation}
Assuming that $j \in \{\max(k, i), \dots, N-1\}$ and $\tilde{\ell}^{j}(i) \in \mathbb{W}^{\theta,M}_{j}$ we have
\begin{equation}\label{IRightRat}
\begin{split}
&\frac{\I(\ell^{j+1}, \tilde \ell^{j}(i))}{\I(\ell^{j+1}, \ell^{j})} =\prod_{p = 1}^{i-1} \frac{s -\ell_p^j - 1 }{s-\ell_p^j - \theta}  \prod_{p= i+1}^{j} \frac{s-\ell_p^j + \theta - 1}{s-\ell_p^j } \cdot \prod_{p= 1}^{j+1}\frac{s- \ell_p^{j+1}- \theta}{s  - \ell_p^{j+1} - 1}  \cdot \dfrac{w_{j}(s-1)}{w_{j}(s)}.
\end{split}
\end{equation}
Combining (\ref{ILeftRat}) and (\ref{IRightRat}) we get for $j \in \{\max(k, i), \dots, N-1\}$, provided that $\ell_i^j = \ell_i^{j+1} = s$, $\tilde{\ell}^{j+1}(i) \in \mathbb{W}^{\theta,M}_{j+1}$ and $\tilde{\ell}^{j}(i) \in \mathbb{W}^{\theta,M}_{j}$ that
\begin{equation}\label{IBothRat}
\begin{split}
&\frac{I(\tilde{\ell}^{j+1}(i), \tilde{\ell}^j(i))}{I({\ell}^{j+1}, {\ell}^j)} =  \frac{I(\tilde{\ell}^{j+1}(i), \tilde{\ell}^j(i))}{I({\ell}^{j+1}, \tilde{\ell}^j(i))} \cdot \frac{I({\ell}^{j+1}, \tilde{\ell}^j(i))}{I({\ell}^{j+1}, {\ell}^j)} \\
& = \prod_{p= 1}^{i-1} \frac{s-\tilde{\ell}^{j+1}_p(i) +\theta - 1}{s - \tilde{\ell}_p^{j+1}(i) }  \prod_{p = i+1}^{j+1} \frac{s-\tilde{\ell}_p^{j+1}(i) - 1}{s-\tilde{\ell}_p^{j+1}(i)- \theta}  \cdot \prod_{p = 1, p \neq i}^{j} \frac{s - \ell_p^j}{s - \ell_p^j + \theta - 1} \cdot \frac{1}{\theta} \\
&\times \prod_{p = 1}^{i-1} \frac{s -\ell_p^j - 1 }{s-\ell_p^j - \theta}  \prod_{p= i+1}^{j} \frac{s-\ell_p^j + \theta - 1}{s-\ell_p^j } \cdot \prod_{p= 1}^{j+1}\frac{s- \ell_p^{j+1}- \theta}{s  - \ell_p^{j+1} - 1}  \cdot \dfrac{w_{j}(s-1)}{w_{j}(s)} \\
& =  \prod_{p = 1}^{i-1} \frac{(s - \ell_p^j)(s -\ell_p^j - 1)}{(s - \ell_p^j + \theta - 1)(s-\ell_p^j - \theta)}    \cdot \prod_{p= 1}^{i-1} \frac{(s-{\ell}^{j+1}_p +\theta - 1)(s- \ell_p^{j+1}- \theta)}{(s - {\ell}_p^{j+1})(s  - \ell_p^{j+1} - 1) }   \cdot \dfrac{w_{j}(s-1)}{w_{j}(s)}.
\end{split}
\end{equation}
Combining (\ref{ILeftRat}) and (\ref{IRightRat}) we also get for $j \in \{\max(k, i), \dots, N-1\}$, provided that $\ell_i^j  = s$ and $\ell_{i+1}^{j+1} = s - \theta$, $\tilde{\ell}^{j+1}(i+1) \in \mathbb{W}^{\theta,M}_{j+1}$ and $\tilde{\ell}^{j}(i) \in \mathbb{W}^{\theta,M}_{j}$ that
\begin{equation}\label{IBothRatDiag}
\begin{split}
&\frac{I(\tilde{\ell}^{j+1}(i+1), \tilde{\ell}^j(i))}{I({\ell}^{j+1}, {\ell}^j)} =  \frac{I(\tilde{\ell}^{j+1}(i+1), \tilde{\ell}^j(i))}{I(\tilde{\ell}^{j+1}(i+1), {\ell}^j)} \cdot \frac{I(\tilde{\ell}^{j+1}(i+1), {\ell}^j)}{I({\ell}^{j+1}, {\ell}^j)} \\
& = \prod_{p = 1}^{i-1} \frac{s -\ell_p^j - 1 }{s-\ell_p^j - \theta}  \prod_{p= i+1}^{j} \frac{s-\ell_p^j + \theta - 1}{s-\ell_p^j } \cdot \prod_{p= 1, p \neq i+1}^{j+1}\frac{s- \tilde\ell_p^{j+1}(i+1)- \theta}{s  - \tilde\ell_p^{j+1}(i+1) - 1} \cdot \frac{1}{\theta} \cdot \dfrac{w_{j}(s-1)}{w_{j}(s)} \\
&\times \prod_{p= 1}^{i} \frac{s-\ell^{j+1}_p - 1}{s - \ell_p^{j+1} - \theta }  \prod_{p = i+2}^{j+1} \frac{s-\ell_p^{j+1} - 1 - \theta}{s-\ell_p^{j+1}- 2\theta}  \cdot \prod_{p = 1}^{j} \frac{s - \ell_p^j - \theta}{s - \ell_p^j  - 1} \\
& = \prod_{p= i+1}^{j} \frac{(s-\ell_p^j + \theta - 1)(s - \ell_p^j - \theta)}{(s-\ell_p^j)(s - \ell_p^j  - 1) }    \cdot \prod_{p=  i+2}^{j+1}\frac{(s- \ell_p^{j+1}- \theta)(s-\ell_p^{j+1} - 1 - \theta)}{(s  - \ell_p^{j+1}- 1)(s-\ell_p^{j+1}- 2\theta)}  \cdot \dfrac{w_{j}(s-1)}{w_{j}(s)}.
\end{split}
\end{equation}

We now derive our first quadruple of equations. Suppose that we have fixed $i \in \llbracket 1, N \rrbracket$, $N  \geq j_1 \geq j_2 \geq \max(k, i)$ and $\ell \in \XX$ such that $\ell_i^{j_2} = \cdots = \ell_i^{j_1} = s$. Let $\tilde{\ell}$ be defined by 
\begin{equation*}
\tilde{\ell}_q^j = \begin{cases}  s - 1 &\mbox{ for $q = i$ and $j_1 \geq j \geq j_2$, } \\  \ell_i^j & \mbox{ for all other $(q,j)$ with $N \geq j \geq k$ and $j \geq q  \geq 1$}, \end{cases}
\end{equation*}
and suppose $\tilde{\ell} \in \XX$. Then from (\ref{ILeftRat}), (\ref{IRightRat}) and (\ref{IBothRat}) we have the following formula when $N-1 \geq j_1 \geq j_2 \geq \max(k + 1 , i)$
\begin{equation}\label{TR1}
\begin{split}
&\frac{\P(\tilde{\ell})}{\P(\ell)} =  \frac{I(\ell^{j_1+1}, \tilde{\ell^{j_1}})}{I(\ell^{j_1+1}, {\ell^{j_1}})} \cdot \frac{I(\tilde{\ell}^{j_2}, {\ell^{j_2-1}})}{I(\ell^{j_2}, \ell^{j_2-1})} \cdot  \prod_{j = j_2 }^{j_1-1} \frac{I(\tilde{\ell}^{j+1}, \tilde{\ell^j})}{I(\ell^{j+1}, \ell^j)} =  \prod_{j = j_2}^{j_1} \frac{w_j(s-1)}{w_j(s)}  \\
&   \times  \prod_{p= 1, p \neq i}^{j_1} \frac{s-\ell_p^{j_1} + \theta - 1}{s-\ell_p^{j_1} } \cdot    \prod_{p = 1, p \neq i}^{j_2} \frac{s-\ell_p^{j_2} - 1}{s-\ell_p^{j_2}- \theta}  \cdot \prod_{p = 1}^{j_2 - 1} \frac{s - \ell_p^{j_2-1}}{s - \ell_p^{j_2 - 1} + \theta - 1} \cdot \prod_{p= 1}^{j_1+1}\frac{s- \ell_p^{j_1+1}- \theta}{s  - \ell_p^{j_1+1} - 1}.
\end{split}
\end{equation}
When $N = j_1 \geq j_2 \geq \max(k + 1 , i)$ we have from (\ref{HTRat}), (\ref{ILeftRat}) and (\ref{IBothRat}) that 
\begin{equation}\label{TR2}
\begin{split}
&\frac{\P(\tilde{\ell})}{\P(\ell)} = \frac{H^t(\tilde \ell^N)}{H^t(\ell^N)} \cdot \frac{I(\tilde{\ell}^{j_2}, {\ell^{j_2-1}})}{I(\ell^{j_2}, \ell^{j_2-1})} \cdot  \prod_{j = j_2 }^{N-1} \frac{I(\tilde{\ell}^{j+1}, \tilde{\ell^j})}{I(\ell^{j+1}, \ell^j)}  \\
& = \prod_{p = 1, p \neq i}^{N} \frac{ s -\ell^N_p -\theta}{s - \ell^N_p } \cdot    \prod_{p= 1, p \neq i}^{j_2} \frac{s-\ell_p^{j_2} - 1}{s-\ell_p^{j_2}- \theta}  \cdot \prod_{p = 1}^{j_2 - 1} \frac{s - \ell_p^{j_2-1}}{s - \ell_p^{j_2 - 1} + \theta - 1} \cdot  \prod_{j = j_2}^{N} \frac{w_j(s-1)}{w_j(s)} . 
\end{split}
\end{equation}
When $N -1 \geq j_1 \geq j_2 = k$ and $1 \leq i \leq k$ we have from (\ref{HBRat}), (\ref{IRightRat}) and (\ref{IBothRat}) that 
\begin{equation}\label{TR3}
\begin{split}
&\frac{\P(\tilde{\ell})}{\P(\ell)} =   \frac{I(\ell^{j_1+1}, \tilde{\ell^{j_1}})}{I(\ell^{j_1+1}, {\ell^{j_1}})} \cdot \frac{H^b(\tilde{\ell}^k)}{H^b({\ell}^k)} \cdot  \prod_{j = k}^{j_1-1} \frac{I(\tilde{\ell}^{j+1}, \tilde{\ell^j})}{I(\ell^{j+1}, \ell^j)}  \\
&  =  \prod_{p=1, p \neq i}^{j_1} \frac{s-\ell_p^{j_1} + \theta - 1}{s-\ell_p^{j_1} } \cdot \prod_{p= 1}^{j_1+1}\frac{s- \ell_p^{j_1+1}- \theta}{s  - \ell_p^{j_1+1} - 1} \cdot  \prod_{p = 1, p \neq i}^{k}\frac{s-\ell^k_p- 1}{s-\ell^k_p+ \theta - 1} \cdot \prod_{j = k}^{j_1} \frac{w_j(s-1)}{w_j(s)}.
\end{split}
\end{equation}
When $ j_1 = N$, $j_2 = k$ and $1 \leq i \leq k$ we have from (\ref{HTRat}), (\ref{HBRat}) and (\ref{IBothRat}) that 
\begin{equation}\label{TR4}
\begin{split}
&\frac{\P(\tilde{\ell})}{\P(\ell)} =   \frac{H^t(\tilde \ell^N)}{H^t(\ell^N)}  \cdot \frac{H^b(\tilde{\ell}^k)}{H^b({\ell}^k)} \cdot  \prod_{j = k}^{N-1} \frac{I(\tilde{\ell}^{j+1}, \tilde{\ell^j})}{I(\ell^{j+1}, \ell^j)}   \\
& =  \prod_{p = 1, p \neq i}^N\frac{s-\ell^N_p - \theta}{s-\ell^N_p}  \prod_{p =1, p \neq i}^{k}\frac{s-\ell^k_p- 1}{s-\ell^k_p+ \theta - 1}  \cdot  \prod_{j = k}^{N} \frac{w_j(s-1)}{w_j(s)}.
\end{split}
\end{equation}

We finally derive our second quadruple of equations. Suppose that we have fixed $i \in \llbracket 1, N \rrbracket $, $N  \geq j_1 \geq j_2 \geq \max(k, i)$ and $\ell \in \XX$ such that $\ell_{i + (m-j_2)}^m = s -(m - j_2) \theta$ for $m \in \llbracket  j_2, j_1 \rrbracket$. Let $\tilde{\ell}$ be defined by 
\begin{equation*}
\tilde{\ell}_q^j = \begin{cases}  s - (j-j_2) \theta - 1 &\mbox{ for  $j_1 \geq j \geq j_2$ and $q = i + (j - j_2)$, } \\  \ell_i^j & \mbox{ for all other $(q,j)$ with $N \geq j \geq k$ and $j \geq q  \geq 1$} \end{cases}
\end{equation*}
and suppose $\tilde{\ell} \in \XX$. Then from (\ref{ILeftRat}), (\ref{IRightRat}) and (\ref{IBothRatDiag}) we have the following formula when $N-1 \geq j_1 \geq j_2 \geq \max(k + 1 , i)$
\begin{equation}\label{UR1}
\begin{split}
&\frac{\P(\tilde{\ell})}{\P(\ell)} =  \frac{I(\ell^{j_1+1}, \tilde{\ell^{j_1}})}{I(\ell^{j_1+1}, {\ell^{j_1}})} \cdot \frac{I(\tilde{\ell}^{j_2}, {\ell^{j_2-1}})}{I(\ell^{j_2}, \ell^{j_2-1})} \cdot  \prod_{j = j_2 }^{j_1-1} \frac{I(\tilde{\ell}^{j+1}, \tilde{\ell^j})}{I(\ell^{j+1}, \ell^j)}  \\
& = \prod_{p = 1, p \neq i + j_1 -j_2}^{j_1} \frac{s -\ell_p^{j_1}  - \theta(j_1 - j_2)- 1 }{s-\ell_p^{j_1} - \theta (j_1 - j_2) - \theta }  \prod_{p= 1, p \neq i}^{j_2} \frac{s-\ell^{j_2}_p +\theta - 1}{s - \ell_p^{j_2} } \\
& \times \prod_{p = 1}^{j_2-1} \frac{s - \ell_p^{j_2-1}}{s - \ell_p^{j_2 - 1} + \theta - 1}  \prod_{p= 1}^{j_1+1}\frac{s- \ell_p^{j_1+1}- \theta( j_1 - j_2) -\theta}{s  - \ell_p^{j_1+1} - \theta(j_1 - j_2) -1}\ \prod_{j = j_2 }^{j_1} \dfrac{w_{j}(s-1- \theta(j-j_2))}{w_{j}(s - \theta(j-j_2))}.
\end{split}
\end{equation}
When $N = j_1 \geq j_2 \geq \max(k + 1 , i)$ we have from (\ref{HTRat}), (\ref{ILeftRat}) and (\ref{IBothRatDiag}) that 
\begin{equation}\label{UR2}
\begin{split}
&\frac{\P(\tilde{\ell})}{\P(\ell)} = \frac{H^t(\tilde \ell^N)}{H^t(\ell^N)} \cdot \frac{I(\tilde{\ell}^{j_2}, {\ell^{j_2-1}})}{I(\ell^{j_2}, \ell^{j_2-1})} \cdot  \prod_{j = j_2 }^{N-1} \frac{I(\tilde{\ell}^{j+1}, \tilde{\ell^j})}{I(\ell^{j+1}, \ell^j)} =  \prod_{p= 1, p \neq i}^{j_2} \frac{s-\ell^{j_2}_p +\theta - 1}{s - \ell_p^{j_2} }  \\
& \times \hspace{-3mm} \prod_{p = 1, p \neq i + N - j_2}^{N} \hspace{-2mm} \frac{ s -\ell^N_p -  \theta (N-j_2)  -1}{s - \ell^N_p - \theta (N-j_2-1) - 1} \prod_{p = 1}^{j_2-1} \frac{s - \ell_p^{j_2-1}}{s - \ell_p^{j_2 - 1} + \theta - 1} \prod_{j = j_2 }^{N} \dfrac{w_{j}(s-1- \theta(j-j_2))}{w_{j}(s - \theta(j-j_2))}.
\end{split}
\end{equation}
When $N -1 \geq j_1 \geq j_2 = k$ and $1 \leq i \leq k$ we have from (\ref{HBRat}), (\ref{IRightRat}) and (\ref{IBothRatDiag}) that 
\begin{equation}\label{UR3}
\begin{split}
&\frac{\P(\tilde{\ell})}{\P(\ell)} =   \frac{I(\ell^{j_1+1}, \tilde{\ell^{j_1}})}{I(\ell^{j_1+1}, {\ell^{j_1}})} \cdot \frac{H^b(\tilde{\ell}^k)}{H^b({\ell}^k)} \cdot  \prod_{j = k}^{j_1-1} \frac{I(\tilde{\ell}^{j+1}, \tilde{\ell^j})}{I(\ell^{j+1}, \ell^j)} = \prod_{p= 1}^{j_1+1}\frac{s- \ell_p^{j_1+1}- \theta( j_1 -k) -\theta}{s  - \ell_p^{j_1+1} - \theta(j_1 - k) -1}  \\
& \times \prod_{p = 1, p \neq i + j_1 -k}^{j_1} \frac{s -\ell_p^{j_1}  - \theta(j_1 -k)- 1 }{s-\ell_p^{j_1} - \theta (j_1 - k) - \theta } \prod_{p = 1, p \neq i}^{k} \frac{s-\ell^k_p -\theta}{s- \ell^k_p }  \prod_{j = k }^{j_1 } \dfrac{w_{j}(s-1- \theta(j-k))}{w_{j}(s - \theta(j-k))}.
\end{split}
\end{equation}
When $ j_1 = N$, $j_2 = k$ and $1 \leq i \leq k$ we have from (\ref{HTRat}), (\ref{HBRat}) and (\ref{IBothRatDiag}) that 
\begin{equation}\label{UR4}
\begin{split}
&\frac{\P(\tilde{\ell})}{\P(\ell)} =   \frac{H^t(\tilde \ell^N)}{H^t(\ell^N)}  \cdot \frac{H^b(\tilde{\ell}^k)}{H^b({\ell}^k)} \cdot  \prod_{j = k}^{N-1} \frac{I(\tilde{\ell}^{j+1}, \tilde{\ell^j})}{I(\ell^{j+1}, \ell^j)}   \\
& = \prod_{p = 1, p \neq i + N -k}^{N} \frac{ s -\ell^N_p -  \theta (N-k)  -1}{s - \ell^N_p - \theta (N-k-1) - 1} \prod_{p = 1, p \neq i}^{k} \frac{s-\ell^k_p -\theta}{s- \ell^k_p } \prod_{j = k }^{N} \dfrac{w_{j}(s-1- \theta(j-k))}{w_{j}(s - \theta(j-k))}.
\end{split}
\end{equation}

%-------------------------------------------------------------------------------------------------------------------------------------------------------------------------------------------------
%    Section 2.2
%
%-------------------------------------------------------------------------------------------------------------------------------------------------------------------------------------------------
\subsection{Multi-level Nekrasov equations: $\theta \neq 1$}\label{Section2.2} The goal of this section is to state and prove the multi-level Nekrasov equations for the case $\theta \neq 1$. 

\begin{theorem}\label{MainNek1} Let $\P$ be a complex measure as in (\ref{S2PDef}) for $\theta > 0$, $\theta \neq 1$, $ N \in \mathbb{N}$, $k \in \llbracket 1, N \rrbracket$, $M \in \mathbb{Z}_{\geq 0}$. Let $\mathcal{M} \subseteq \mathbb{C}$ be an open set and $[- N \cdot \theta, M + 1 - \theta] \subseteq \mathcal{M}$. Suppose that there exist functions $\phi_r^{ j}$ for $j=k,\dots, N + 1$, $r=1,2$ that are analytic in $\mathcal{M}$ and such that for any $j  \in \llbracket k, N \rrbracket$
\begin{equation}\label{eq:phi}
\begin{split}
\frac{\phi^{j+1}_1(z)}{ \phi^{j}_1(z)} &=  \frac{w_j(z-1)}{w_j(z)} \mbox{ for $z \in [1 -j \cdot \theta, M - \theta]$};\\
 \frac{\phi_2^{j}(z)}{ \phi_2^{j+1}(z)} &=\dfrac{w_{j}(z + (N -j) \cdot \theta-1)}{w_{j}(z + (N- j) \cdot \theta)} \mbox{ for $z \in [1 - N \cdot \theta, M - (N-j+1) \cdot \theta]$}.
\end{split}
\end{equation}
Then the following functions $R_1(z)$, $R_2(z)$ are analytic in $\mathcal{M}$:
\begin{equation}\label{eq:mN1_gen}
\begin{split}
R_1 (z):= \hspace{2mm}&\phi_1^{N+1}(z) \cdot \mathbb{E} \left[ \prod_{p = 1}^N\frac{z- \ell^N_p -\theta}{z - \ell^N_p} \right] + \phi_1^k(z) \cdot \mathbb{E} \left[ \prod_{p = 1}^{k}\frac{z- \ell^{k}_p + \theta - 1}{z - \ell^{k}_p - 1} \right]   \\
&+ \frac{\theta}{1-\theta}  \sum\limits_{j=k+1}^{N}\phi^{j}_1(z) \cdot \mathbb{E} \left[  \prod_{p = 1}^{j} \frac{z- \ell^{j}_p  -\theta}{z - \ell_p^{j} - 1}  \prod_{p = 1}^{j-1} \frac{z-  \ell_p^{j-1} + \theta - 1}{z -  \ell_p^{j-1}} \right]-\R_1(z);
\end{split}
\end{equation}

\begin{equation}\label{eq:mN2_gen}
\begin{split}
&R_2 (z):= \phi_2^{N+1}(z)  \mathbb{E}\hspace{-1mm} \left[ \prod_{p = 1}^N\frac{z- \ell^N_p +\theta-1}{z - \ell^N_p-1} \right] \hspace{-1mm} + \phi_2^k(z)  \mathbb{E}\hspace{-1mm} \left[ \prod_{p = 1}^{k}\frac{z- \ell^{k}_p+(N-k-1)\theta }{z - \ell^{k}_p +(N-k)\theta} \right] \hspace{-1mm} + \hspace{-1mm} \frac{\theta}{1-\theta}  \\
&  \times \hspace{-2mm}  \sum\limits_{j=k+1}^{N} \hspace{-2mm} \phi^{ j}_2(z)\mathbb{E}\left[  \prod_{p = 1}^{j} \frac{z- \ell^{j}_p+(N-j+1)\theta -1}{z - \ell_p^{j} +(N-j)\theta}    \prod_{p = 1}^{j-1}  \frac{z-\ell_p^{j-1} +(N-j)\theta}{z -  \ell_p^{j-1}+(N-j+1) \theta-1} \right]-\R_2(z),
\end{split}
\end{equation}
where $s_M=M+1 - \theta$ and the functions $\R_1(z)$, $\R_2(z)$ are given by
\begin{equation}\label{NERem1}
\begin{split}
\R_1(z)= \hspace{2mm}& \dfrac{(-  \theta)  \phi^{N+1}_1(-N \cdot \theta) }{z + N \cdot \theta }  \cdot \mathbb{E}\left[\prod\limits_{p=1}^{N-1}\dfrac{\ell^N_p+(N+1) \cdot \theta}{\ell^N_p +N \cdot \theta} \cdot {\bf 1}\{  \ell^N_N= -N \cdot \theta  \}\right] \\
&+\dfrac{\theta \phi_1^k(s_M)  }{z-s_M} \cdot \mathbb{E}\left[\prod\limits_{p=2}^{k}\dfrac{s_M-\ell^k_p+\theta-1}{s_M-\ell^k_p-1}\cdot {\bf 1}\{ \ell^k_1=s_M-1 \} \right]  \\
   & + \hspace{-1mm} \sum\limits_{j=k+1}^{N} \hspace{-1mm}\frac{\theta  \phi^{j}_1(s_M)  }{z - s_M} \cdot \mathbb{E}\left[\prod\limits_{p=2}^{j} \dfrac{s_M-\ell^{j}_p-\theta}{s_M-\ell^{j}_p-1}
   \prod\limits_{p=1}^{j-1}   \dfrac{s_M-\ell^{j-1}_p+\theta-1}{s_M-\ell^{j-1}_p} \cdot {\bf 1 } \{ \ell^j_1=s_M-1 \} \right] \hspace{-1mm},
\end{split}
\end{equation}
\begin{equation}\label{NERem2}
\begin{split}
& \R_2(z)=  \frac{\theta \phi_2^{N+1}(s_M)}{z - s_M} \cdot  \mathbb{E}\left[\prod\limits_{p=2}^{N}\dfrac{s_M-\ell^N_p+\theta-1}{s_M-\ell^N_p-1}\cdot {\bf 1 }\{ \ell^N_1=s_M-1 \}\right]  \\
& + \frac{(-\theta)\phi_2^k(-N \cdot \theta ) }{z + N \cdot \theta} \cdot \mathbb{E}\left[\prod\limits_{p=1}^{k-1}\dfrac{\ell^k_p + (k-1)\cdot \theta}{\ell^k_p + k \cdot \theta} \cdot 
      {\bf 1 }\{\ell^k_{k}= -k \cdot \theta \} \right] \hspace{-1mm}+ \hspace{-1mm} \sum \limits_{j=k+1}^{N} \frac{(-\theta)\phi^{j}_2(-N \cdot \theta)}{z + N \cdot \theta}  \\
&\times \mathbb{E}\left[\prod\limits_{p=1}^{j-1} \dfrac{\ell^{j}_p+(j-1)\cdot \theta+1}{\ell^{j}_p+ j \cdot \theta}
     \prod\limits_{p=1}^{j-1} \dfrac{\ell^{j-1}_p+ j \cdot \theta}{\ell^{j-1}_p+ (j-1) \cdot \theta+1} \cdot {\bf 1}\{ \ell^{j}_{j}= - j \cdot \theta \} \right].
    \end{split}
\end{equation}
  \end{theorem}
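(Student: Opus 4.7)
My plan is to prove the theorem by showing that the residues of $R_1(z)$ and $R_2(z)$ vanish at every candidate pole in $\mathcal{M}$, once the explicit boundary corrections $\R_1(z)$ and $\R_2(z)$ have been subtracted. Since $\XX$ is finite, both functions are rational in $z$ with only a discrete finite set of candidate poles coming from particle positions, so analyticity in $\mathcal{M}$ reduces to residue vanishing. I will focus on $R_1$; the argument for $R_2$ is entirely parallel, replacing the vertical-column shift identities (\ref{TR1})--(\ref{TR4}) by the diagonal-staircase identities (\ref{UR1})--(\ref{UR4}).

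Fix a candidate pole $z = s$. Inspection of (\ref{eq:mN1_gen}) reveals four types of configurations producing a residue at $z = s$: (i) $\ell_p^N = s$ from the first term; (ii) $\ell_p^k = s-1$ from the second term; (iii) $\ell_p^j = s-1$ for some $j \in \llbracket k+1, N\rrbracket$ from the upper product of the $j$-th inner-sum term; and (iv) $\ell_p^m = s$ for some $m \in \llbracket k, N-1\rrbracket$ from the lower product of the $(m+1)$-th inner-sum term. Computing residues directly, the prefactor $\theta/(1-\theta)$ in the inner sum of (\ref{eq:mN1_gen}) is the unique constant that normalizes the contributions of types (iii) and (iv) to $\pm\theta\phi_1^{j}(s)$ times products of rational factors in $\ell$, thereby matching the $-\theta\phi_1^{N+1}(s)$ and $\theta\phi_1^k(s)$ coming from types (i) and (ii). This uniformity is the structural reason for the shape of the inner sum. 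A further simplification is the observation that any potential source at level $j$ whose ``diagonal neighbour'' $\ell_{p+1}^{j+1}$ equals $s-\theta$ forces one of the numerator factors of the sum term at level $j+1$ to vanish, so such configurations contribute zero residue; for $R_1$ only genuine vertical obstructions $\ell_p^{j-1}=s$ need to be resolved.

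The heart of the proof is a pairing $\ell \leftrightarrow \tilde\ell$ between contributing configurations. Given $\ell \in \XX$ with $\ell_i^j = s$, define $\tilde\ell$ by shifting each particle in the maximal vertical column $\ell_i^m = s$, $m \in \llbracket j_2, j_1\rrbracket$, down by $1$, where $\llbracket j_2, j_1\rrbracket$ is the longest range of consecutive levels on which $\ell$ has a common-index particle at position $s$. The four cases $(j_1 = N$ or $< N) \times (j_2 = k$ or $> k)$ are precisely those handled by the four identities (\ref{TR1})--(\ref{TR4}), each giving an explicit formula for $\P(\tilde\ell)/\P(\ell)$. Using condition (\ref{eq:phi}) to telescope $\prod_{j = j_2}^{j_1} w_j(s-1)/w_j(s)$ into $\phi_1^{j_1+1}(s)/\phi_1^{j_2}(s)$, a direct algebraic comparison verifies that for each such pair the $\P$-weighted sum over all $j_1 - j_2 + 1$ sources attached to $\ell$ and all $j_1 - j_2 + 1$ sources attached to $\tilde\ell$ is zero. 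Consequently the residue of $R_1(z)$ at $z = s$ vanishes whenever each column admits a valid shift. The shift fails (and the pairing is defective) precisely when $\tilde\ell$ leaves $\XX$: either $j_1 = N$ with $\ell_N^N = -N\theta$ so that $s = -N\theta$, or $\ell_1^{j_1} = s_M - 1$ with $s = s_M$ so the would-be $\tilde\ell_1^{j_1}$ exceeds $M-\theta$. These unpaired boundary contributions are exactly the three sums gathered in $\R_1(z)$; the analogous analysis for $R_2$, using (\ref{UR1})--(\ref{UR4}) and the diagonal-staircase shifts $\ell_{i+(m-j_2)}^m = s - (m-j_2)\theta \mapsto s - (m-j_2)\theta - 1$, produces the three sums in $\R_2(z)$.

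The main obstacle is the algebraic verification in the previous paragraph: one must check, case by case over the four variants of (\ref{TR1})--(\ref{TR4}) and over all column lengths $j_1 - j_2 + 1 \geq 1$, that the explicit product-of-ratios formula for $\P(\tilde\ell)/\P(\ell)$ matches the ratio between the summed residues at sources in $\tilde\ell$ and those at sources in $\ell$. Concretely, this reduces to identifying the factors involving the particles $\ell_p^{j_2 - 1}$ and $\ell_p^{j_1 + 1}$ at the two level boundaries of the column with the corresponding factors appearing in the residues of the adjacent inner-sum terms in (\ref{eq:mN1_gen}), using that interior-to-the-column factors $(s-\ell_i^m-\theta)/(s-\ell_i^m-1)=\theta$ and $(s-\ell_i^m+\theta-1)/(s-\ell_i^m)=\theta-1$ appear in patterns that cancel against the explicit products on the right-hand sides of (\ref{TR1})--(\ref{TR4}). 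The particular coefficient $\theta/(1-\theta)$ in (\ref{eq:mN1_gen}), together with the precise form of the two rational factors in each inner-sum term, is what makes this matching work for every column length and every one of the four boundary cases.
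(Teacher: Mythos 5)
Your overall strategy is the same as the paper's (reduce analyticity to residue vanishing, cancel residues via particle shifts computed from (\ref{TR1})--(\ref{TR4}) and (\ref{UR1})--(\ref{UR4}) together with (\ref{eq:phi}), and identify the uncancelled boundary contributions at $z=-N\theta$ and $z=s_M$ with $\R_1,\R_2$), but the key combinatorial step is set up incorrectly. You pair \emph{whole configurations} $\ell\leftrightarrow\tilde\ell$, where $\tilde\ell$ shifts the entire maximal column of $i$-indexed particles at $s$ down by one, and claim that the total residue contribution of $\ell$ plus that of $\tilde\ell$ vanishes. This is false, and your rule is not even injective on configurations. Concretely, take $N=2$, $k=1$, a configuration with $\ell_1^2=\ell_1^1=s$ and $\ell_2^2=c<s-\max(1,\theta)$, and let $\tilde\ell$ be the full shift ($\tilde\ell_1^2=\tilde\ell_1^1=s-1$). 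The residue contributions at $z=s$ are: from $\ell$, the term $-\theta\,\phi_1^{3}(s)\P(\ell)\tfrac{s-c-\theta}{s-c}$ (first expectation) and $-\theta^{2}\phi_1^{2}(s)\P(\ell)\tfrac{s-c-\theta}{s-c-1}$ (pole of the lower product in the inner sum); from $\tilde\ell$, the term $\theta\,\phi_1^{1}(s)\P(\tilde\ell)$ (second expectation) and $\theta^{2}\phi_1^{2}(s)\P(\tilde\ell)\tfrac{s-c-\theta}{s-c-1}$ (pole of the upper product in the inner sum). By (\ref{TR4}) and (\ref{eq:phi}) the first and third of these cancel, but the remaining two sum to $\theta^{2}\phi_1^{2}(s)\tfrac{s-c-\theta}{s-c-1}\bigl(\P(\tilde\ell)-\P(\ell)\bigr)$, which is nonzero in general since $\P(\tilde\ell)/\P(\ell)=\tfrac{s-c-\theta}{s-c}\cdot\phi_1^{3}(s)/\phi_1^{1}(s)\neq 1$. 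The missing partners live in the \emph{partially} shifted configuration $\hat\ell$ with $\hat\ell_1^1=s-1$, $\hat\ell_1^2=s$: it is $\hat\ell$ (via (\ref{TR3})) that cancels $\ell$'s level-$1$ contribution, and it is $\hat\ell$'s own level-$2$ source contribution (via (\ref{TR2})) that cancels $\tilde\ell$'s inner-sum contribution. Note moreover that your rule sends both $\ell$ and $\hat\ell$ to the same $\tilde\ell$, so the contributing configurations cannot be grouped into disjoint cancelling pairs at all.

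The correct bookkeeping is finer: one must pair individual (configuration, source-level) contributions. For a source $(\ell,n)\in\Sp^{i,n}$ the partner is the configuration obtained by shifting only the sub-column from the bottom of the column up to level $n$ (so (\ref{TR1})--(\ref{TR4}) are applied with $j_1=n$ and $j_2$ the column bottom, not with the full maximal column), and one then has to verify that this map is a bijection from $\sqcup_j\Sp^{i,j}$ onto $\sqcup_j\Sm^{i,j}$ and that the shifted configuration stays in $\XX$; the same correction applies to your $R_2$ argument with diagonal staircases. A smaller omission: you take for granted that all poles of $R_1,R_2$ are simple, but when $\theta$ is the reciprocal of an integer the two products inside a single inner-sum expectation can have coinciding poles ($s=\ell_q^{j}+1=\ell_r^{j-1}$), and one must check that interlacing then forces a compensating zero in the numerator (the factor $z-\ell_{q-1}^{j-1}+\theta-1$) so the pole stays simple; without this, the residue computation you rely on is not justified.
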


\begin{remark}\label{RemBoundary}
If we have that $\phi^{N+1}_1(-N \cdot \theta)=0$ and $ \phi^{j}_1(s_M) = 0$ for $j = 1, \dots, N$ then $\R_1(z) = 0$. Analogously, if $\phi_2^{N+1}(s_M ) = 0$ and $\phi^j_2(-N \cdot \theta) = 0$ for $j = 1, \dots, N$ then $\R_2(z) = 0$.
\end{remark}

\begin{proof} Before we go into the details let us briefly explain the structure of the argument. We will establish the analyticity of $R_1$ and $R_2$ separately from each other by completing six steps (for each function). In the first step of the argument we prove that $R_1, R_2$ can only have possible poles at $s =  a- b \cdot \theta$, where $b \in \llbracket 1, N \rrbracket$ and $a \in \llbracket 0, M+1 \rrbracket$ and that all of these poles are simple. In the second step we fix one of the possible poles and compute the residue at this pole, for which we obtain an expression of the form
\begin{equation}\label{S2S2X1}
\mathsf{Res} = \sum_{i = 1}^N \sum_{j = k}^N \sum_{(\ell,j) \in {X}^{i,j}} F^{i}(s; \ell, j ) +  \sum_{i = 1}^N \sum_{j = k}^N \sum_{(\ell,j) \in {Y}^{i,j}} G^i(s; \ell,j),
\end{equation}
where ${X}^{i,j}$ and ${Y}^{i,j}$ are certain sets of pairs of $\ell \in \XX$ and $j \in \llbracket k, N \rrbracket$, and the functions $F^i,G^i$ are certain rational functions of $s$. In the proof of analyticity of $R_1$ the above residue formula is (\ref{eq:expand2}) and in the one for $R_2$ it is (\ref{eq:expand2V2}). To prove the analyticity of $R_1$ and $R_2$ we seek to show that the residue in (\ref{S2S2X1}) is in fact zero, i.e. the above two sums perfectly cancel with each other. 

In order to show that the sums in (\ref{S2S2X1}) cancel we construct for each $i \in \llbracket 1, N \rrbracket$ bijections
$$\bn: \sqcup_{j = k}^N {X}^{i,j} \rightarrow  \sqcup_{j = k}^N {Y}^{i,j} , \mbox{ such that } F^{i}(s; \ell, j ) = - G^i(s; \bn(\ell,j)).$$
The existence of such maps shows that for each $i$ the sums in (\ref{S2S2X1}) cancel with each other, which of course implies that the residue is zero. The definition of the map $\bn$ is done in Step 4 of the two proofs and in Step 5 it is shown that the map is a bijection. We mention that the map $\bn$ is {\em different } in the proof of analyticity for $R_1$ and $R_2$, denoted by $\b$ and $\bb$, respectively.

In Step 6 we verify that $F^{i}(s; \ell, j ) = - G^i(s; \bn(\ell,j))$, for which we use the quadruple of equations (\ref{TR1}), (\ref{TR2}), (\ref{TR3}) and (\ref{TR4}) from Section \ref{Section2.1} when working with $R_1$, and the quadruple (\ref{UR1}), (\ref{UR2}), (\ref{UR3}) and (\ref{UR4}) from Section \ref{Section2.1} when working with $R_2$.

We now turn to providing the details behind the above outline. Throughout the proof we will switch between $\ell$ and $\lambda$ using the formulas $\ell_i^j = \lambda_i^j - j \cdot \theta$ for $j \in \llbracket k , N \rrbracket$ and $i \in \llbracket  1, j \rrbracket$ -- we will not mention this further. We also adopt the convention $\ell_{j+1}^j = \lambda_{j+1}^j = -\infty$ and $\lambda_0^j = \ell_0^j = \infty$.\\

{\bf \raggedleft Analyticity of $R_1.$} For clarity we split the proof into six steps.\\

{\bf \raggedleft Step 1.} The function $R_1$ has possible poles at $s =  a- b \cdot \theta$, where $b \in \llbracket 1,  N\rrbracket $ and $a \in \llbracket 0, M+1 \rrbracket$. In this step we show that all of these poles are simple. We also show that the residues at $z = - N \cdot \theta$ and $z= s_M$ are equal to $0$, i.e. $R_1$ is analytic near these points. 

Firstly, note that the function $\R_1(z)$ only has simple poles at $z = -N \cdot \theta$ and $z = s_M$ by definition. Next, we have for each $j \in  \llbracket k, N \rrbracket$ that $\ell_1^j > \ell_2^j > \cdots > \ell_j^j$ and so the expectations in the first line of (\ref{eq:mN1_gen}) can produce only simple poles. Similarly, each of the two products inside the expectation in the second line of (\ref{eq:mN1_gen}) can only have simple poles. Let us check that it can not happen that these two products share the same pole. Assuming the contrary, there must exist $j \in \llbracket k+1,N \rrbracket $ and $\ell \in \XX$ such that $s = \ell_q^j + 1$ for some $q \in \llbracket 1, j \rrbracket$ and $s = \ell_{r}^{j-1}$ for some $r \in \llbracket 1, j - 1\rrbracket$. The last statement implies
$$ \ell^j_{q}-\ell^{j-1}_{r}=\lambda^j_q-\lambda^{j-1}_r +(r - q)\theta=-1.$$
Since $\lambda^{j-1}_r\geq \lambda^{j}_q$ for $r < q$ and $\lambda^{j}_q \geq \lambda^{j-1}_r$ when $q \leq  r$ the above equality can only hold if $r < q$, $\lambda^j_q=\lambda^{j-1}_r$  and $\theta = (q - r)^{-1}$ (here we used that $\ell \in \XX$ and so $\ell^j \succeq \ell^{j-1}$ as in (\ref{S1GenState})). From the interlacing $\ell^j \succeq \ell^{j-1}$ we conclude that necessarily $\lambda^{j-1}_{q-1}=\lambda^j_q$, which implies $\ell^{j-1}_{q-1}=\ell^j_q+\theta = s- 1 + \theta.$ This produces an extra factor $z - s$ from the $z - \ell_{q-1}^{j-1} + \theta -1$ in the second product in the second line of (\ref{eq:mN1_gen}), which means that the order of the pole at $s$ is reduced from $2$ to $1$. Thus indeed, all the poles are simple.\\

We next check that the residues at $z = -N \cdot \theta$ and $z= s_M$ are equal to $0$. Note that the residue at $z = -N \cdot \theta$ has two contributions, coming from the first expectation in the first line of (\ref{eq:mN1_gen}), when $\ell_N^N = -N \cdot \theta$, and from the first line of (\ref{NERem1}), and they exactly cancel. Similarly, the residue at $z = s_M$ gets contributions from the second expectation in the first line of (\ref{eq:mN1_gen}), when $\ell^k_1 = s_M - 1$, from the expectations on the second line of (\ref{eq:mN1_gen}), when $\ell_1^j = s_M - 1$, and from the second and third lines of (\ref{NERem1}), and they exactly cancel. This proves the analyticity of $R_1$ near $-N \cdot \theta$ and $s_M$.\\

{\bf \raggedleft Step 2.} In the next five steps we will compute the residue at any $s =  a- b \cdot \theta$ such that $s\not \in \{- N \cdot \theta, s_M\}$ and verify that it is zero. In this step we obtain a useful formula for this residue, which will be used later to show that it is zero.

We first expand all the expectations to get
\begin{equation}\label{eq:expand}
\begin{split}
&R_1(z)=\phi_1^{N+1}(z) \hspace{-1mm} \sum_{\ell \in \XX}  \P(\ell)\cdot  \prod_{p = 1}^N\frac{z- \ell^N_p -\theta}{z - \ell^N_p}  + \phi^k_1(z) \hspace{-1mm} \sum_{\ell \in \XX} 
\P(\ell) \prod_{p = 1}^{k}\frac{z- \ell^{k}_p + \theta - 1}{z - \ell^{k}_p - 1}  \\
& + \frac{\theta}{1-\theta}  \sum\limits_{j=k+1}^{N}\phi_1^{j}(z)
\sum_{\ell \in \XX } \P(\ell)  \prod_{p = 1}^{j} \frac{z- \ell^{j}_p -\theta}{z - \ell_p^{j} - 1}\cdot \prod_{p = 1}^{j-1}  \frac{z-  \ell_p^{j-1} + \theta - 1}{z -  \ell_p^{j-1}} -\R_1(z).
\end{split}
\end{equation}

For each $j \in \llbracket k+1, N \rrbracket$ and $i \in \llbracket 1, j \rrbracket$ we let $\Sm^{i,j}$ denote the set of pairs $(\ell,j)$, such that $\ell \in \XX$, $\ell^j_i = s - 1$ and
\begin{equation}\label{PR1}
\prod_{p = 1}^{j} \frac{1}{z - \ell_p^{j} - 1} \cdot \prod_{p = 1}^{j-1} (z-  \ell_p^{j-1} + \theta - 1)
\end{equation}
has a simple pole at $z= s$. If $i \geq j+1$ we let $\Sm^{i,j}$ denote the empty set.

For $i \in \llbracket 1, k \rrbracket$ we also let $\Sm^{i,k}$ denote the sets of pairs $(\ell, k)$ such that $\ell \in \XX$, $\ell^k_i = s - 1$ and 
\begin{equation}\label{PR2}
  \prod_{p = 1}^{k}\frac{z- \ell^{k}_p + \theta - 1}{z - \ell^{k}_p - 1} 
\end{equation}
has a simple pole at $z = s$. If $i \geq k+1$ we let $\Sm^{i,k}$ denote the empty set.

For each $j \in \llbracket  k, N-1 \rrbracket$ and $i \in \llbracket 1,  j \rrbracket$ we let $\Sp^{i,j}$ denote the set of pairs $(\ell,j)$, such that $\ell \in \XX$, $\ell^{j}_i = s$ and
\begin{equation}\label{PR3}
 \prod_{p = 1}^{j}  \frac{1}{z -  \ell_p^{j}} \cdot \prod_{p = 1}^{j+1} (z- \ell^{j+1}_p -\theta)
\end{equation}
has a simple pole at $z = s$. If $i \geq j+1$ we let $\Sp^{i,j}$ denote the empty set. 

For $i \in \llbracket 1, N \rrbracket$ we let $\Sp^{i,N}$ denote the  sets of pairs $(\ell, N)$ such that $\ell \in \XX$, $\ell_i^N = s$ and 
\begin{equation}\label{PR4}
\prod_{p = 1}^N\frac{z- \ell^N_p -\theta}{z - \ell^N_p}
\end{equation}
has a simple pole at $z = s$.\\

In view of our work in Step 1, and (\ref{eq:expand}) we have the following formula for the residue at $z = s$
\begin{equation}\label{eq:expand2}
\begin{split}
&\sum_{i = 1}^N \phi_1^{N+1}(s)  \sum_{(\ell,N) \in \Sp^{i,N}}  \P(\ell)\cdot (- \theta) \cdot   \prod_{p = 1, p \neq i}^N\frac{s- \ell^N_p -\theta}{s - \ell^N_p} \hspace{2mm} \\
& + \sum_{i = 1}^N  \phi^k_1(s)  \sum_{(\ell, k) \in \Sm^{i,k}}  \P(\ell) \cdot \theta \cdot \prod_{p = 1, p \neq i}^{k}\frac{s- \ell^{k}_p + \theta - 1}{s - \ell^{k}_p - 1}  \hspace{2mm}  \\
& + \frac{\theta}{1-\theta} \sum_{ i = 1}^N  \sum_{j=k+1}^{N}\phi_1^{j}(s) \sum_{(\ell,j) \in \Sm^{i,j}} \P(\ell)  (1 - \theta)  \prod_{p = 1, p \neq i}^{j} \frac{s- \ell^{j}_p -\theta}{s - \ell_p^{j} - 1}\cdot \prod_{p = 1}^{j-1}  \frac{s-  \ell_p^{j-1} + \theta - 1}{s -  \ell_p^{j-1}}  \\
& + \frac{\theta}{1-\theta} \sum_{ i = 1}^N  \sum_{j=k}^{N-1}\phi_1^{j + 1}(s) \sum_{(\ell,j ) \in \Sp^{i,j} } \hspace{-3mm} \P(\ell)  (\theta - 1)  \prod_{p = 1}^{j + 1} \frac{s- \ell^{j+1}_p -\theta}{s - \ell_p^{j+1} - 1}\cdot \prod_{p = 1, p \neq i}^{j}  \frac{s-  \ell_p^{j} + \theta - 1}{s -  \ell_p^{j}},
\end{split}
\end{equation}  
where we used that $\R_1(z)$ is analytic away from the points $-N \cdot \theta$ and $s_M$ and that $s\not \in \{-N \cdot \theta, s_M\}$ by assumption (hence $\R_1(z)$ does not contribute to the residue at $z = s$). We also mention that our convention is that the sum over an empty set is zero.

Equation (\ref{eq:expand2}) is the main output of this step and below we show that the total sum is zero.\\

{\bf \raggedleft Step 3.} The goal of this step is to show that the sum over all terms in (\ref{eq:expand2}) for a fixed $i \in \llbracket 1, N\rrbracket$ vanishes, which would of course imply that the full sum vanishes. To accomplish this we construct a bijection $\b$ 
\begin{equation}\label{ER0}
\b: \sqcup_{j = k}^N \Sp^{i,j}\rightarrow  \sqcup_{j = k}^N \Sm^{i,j} , \mbox{ such that }\b \left(\Sp^{i,j_1} \right)\subseteq \sqcup_{j =k}^{j_1} \Sm^{i,j} \mbox{ for $j_1  \in \llbracket k, N \rrbracket$ and }
\end{equation}
 if $(\tilde{\ell}, \tilde{n}) = \b(\ell, n)$ the following four equations all hold. We first have that
\begin{equation}\label{ER1}
0 = \phi_1^{N+1}(s) \P(\ell)\cdot (- 1) \cdot   \prod_{p = 1, p \neq i}^N\frac{s- \ell^N_p -\theta}{s - \ell^N_p} +  \phi_1^k(s)  \P(\tilde{\ell}) \cdot \prod_{p = 1, p \neq i}^{k}\frac{s- \tilde{\ell}^{k}_p + \theta - 1}{s - \tilde{\ell}^{k}_p - 1},
\end{equation}
if $(\ell, n) \in \Sp^{i, N}$ and $(\tilde{\ell}, \tilde{n}) \in \Sm^{i,k}$. Secondly, we have
\begin{equation}\label{ER2}
\begin{split}
0 = \hspace{2mm}&\phi_1^{N+1}(s) \P(\ell)\cdot (- 1) \cdot   \prod_{p = 1, p \neq i}^N\frac{s- \ell^N_p -\theta}{s - \ell^N_p}  \\
&+ \phi_1^{\tilde{n}}(s) \P(\tilde{\ell}) \cdot  \prod_{p = 1, p \neq i}^{\tilde{n}} \frac{s- \tilde{\ell}^{\tilde{n}}_p -\theta}{s - \tilde{\ell}_p^{\tilde{n}} - 1}\cdot \prod_{p = 1}^{\tilde{n}-1}  \frac{s-  \tilde{\ell}_p^{\tilde{n}-1} + \theta - 1}{s -  \tilde{\ell}_p^{\tilde{n}-1}},
\end{split}
\end{equation}
if $(\ell, n) \in \Sp^{i, N}$ and $(\tilde{\ell}, \tilde{n}) \in \Sm^{i, \tilde{n}}$ for $N \geq \tilde{n} \geq k+1$. Thirdly, we have
\begin{equation}\label{ER3}
\begin{split}
0 = \hspace{2mm}&\phi_1^{n+1}(s) \P(\ell) \cdot(-1) \cdot   \prod_{p = 1}^{n+1} \frac{s- \ell^{n+1}_p -\theta}{s - \ell_p^{n+1} - 1}\cdot \prod_{p = 1, p \neq i}^{n}  \frac{s-  \ell_p^{n} + \theta - 1}{s -  \ell_p^{n}}  \\
&+ \phi_1^{\tilde{n}}(s) \P(\tilde{\ell})  \cdot  \prod_{p = 1, p \neq i}^{\tilde{n}} \frac{s- \tilde{\ell}^{\tilde{n}}_p -\theta}{s - \tilde{\ell}_p^{\tilde{n}} - 1}\cdot \prod_{p = 1}^{\tilde{n}-1}  \frac{s-  \tilde{\ell}_p^{\tilde{n}-1} + \theta - 1}{s -  \tilde{\ell}_p^{\tilde{n}-1}},
\end{split}
\end{equation}
if $(\ell, n) \in \Sp^{i,n}$ and $(\tilde{\ell}, \tilde{n}) \in \Sm^{i, \tilde{n}}$ with $N - 1 \geq n \geq \tilde{n} \geq k + 1$. Finally, we have
\begin{equation}\label{ER4}
\begin{split}
0 = \hspace{2mm}&\phi_1^{n+1}(s) \P(\ell) \cdot (-1) \cdot  \prod_{p = 1}^{n+1} \frac{s- \ell^{n+1}_p -\theta}{s - \ell_p^{n+1} - 1}\cdot \prod_{p = 1, p \neq i}^{n}  \frac{s-  \ell_p^{n} + \theta - 1}{s -  \ell_p^{n}}  \\
& + \phi_1^k(s)  \P(\tilde{\ell}) \cdot \prod_{p = 1, p \neq i}^{k}\frac{s- \tilde{\ell}^{k}_p + \theta - 1}{s - \tilde{\ell}^{k}_p - 1} ,
\end{split}
\end{equation}
if $(\ell, n) \in \Sp^{i,n}$ for $N-1 \geq n \geq k$ and $(\tilde{\ell}, \tilde{n}) \in \Sm^{i,k}$.

If we can find a bijection $\b$ as in (\ref{ER0}) that satisfies (\ref{ER1}), (\ref{ER2}), (\ref{ER3}) and (\ref{ER4}) we would conclude that the sum in (\ref{eq:expand2}) is zero since the contributions from the first and fourth line in (\ref{eq:expand2}) exactly cancel with those on the second and third line. \\

We next focus on constructing the map $\b$, showing that it is a bijection of the sets in (\ref{ER0}) and that it satisfies equations (\ref{ER1}), (\ref{ER2}), (\ref{ER3}) and (\ref{ER4}).\\

{\bf \raggedleft Step 4.} In this step we construct the map $\b$ on $\sqcup_{j = k}^N \Sp^{i,j}$, and show that 
$$\b\left(\Sp^{i,j_1} \right)\subseteq \sqcup_{j =k}^{j_1} \Sm^{i,j} \mbox{ for $j_1 \in \llbracket k, N \rrbracket $}.$$

Suppose that $(\ell, n) \in \Sp^{i, n}$ for some $n \in \llbracket k, N\rrbracket$. We let $\tilde{n}$ be the largest index in $\llbracket \max(i,k+1), n\rrbracket$ such that $\lambda_i^n > \lambda_i^{\tilde{n} - 1}$ (recall that we use the convention $\lambda_{q+1}^q = - \infty$). By the interlacing condition in the definition of $\XX$ we have that $\lambda_i^n \geq \lambda_i^{m-1}$ for all $m \in \llbracket \max(i, k+1), n \rrbracket$ and if we have equality for all $m \in \llbracket \max(i, k+1), n \rrbracket$ we set $\tilde{n} = k$.

With the above choice of $\tilde{n}$ we define the configuration $\tilde{\ell} =(\tilde{\ell}^N, \dots, \tilde{\ell}^k)$ by 
$$\tilde{\ell}^m = \ell^m \mbox{ for $m \not \in \llbracket \tilde{n}, n \rrbracket$, and } \tilde{\ell}^m= (\ell^m_1, \dots, \ell^m_{i-1}, \ell^m_{i} - 1, \ell^m_{i+1} , \dots, \ell^m_{m}) \mbox{ for $m \in \llbracket \tilde{n}, n \rrbracket $}.$$

The above two  paragraphs define $(\tilde{\ell}, \tilde{n})$ and we let $\b(\ell, n) = (\tilde{\ell}, \tilde{n})$ for $(\ell, n) \in  \Sp^{i, n}$, see Figure \ref{S2_1}. We check that $(\tilde{\ell}, \tilde{n}) \in \Sm^{i, \tilde{n}}$, which would complete our work in this step as $\tilde{n} \leq n$ by construction. 

\begin{figure}[h]
\centering
  \scalebox{0.7}{\includegraphics[width=0.9\linewidth]{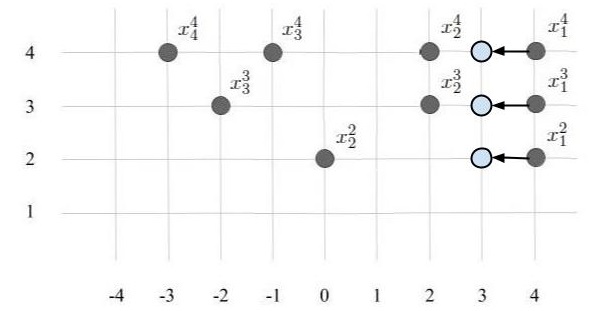}}
\captionsetup{width=.95\linewidth}
  \caption{The figure depicts the action of the map $\b$. To make the action comprehensible we explain how it works in the changed coordinates $x_i^j = \ell_i^j + i \cdot (\theta -1) $ -- this way the $x$'s all lie on the integer lattice and $x_1^j, \dots, x_j^j$ are distinct. The function $\b$ takes as input $(\ell,n)$ and in the changed coordinates looks at the longest string of particles $x^j_i$ for $k \leq j \leq n$ that have the same horizontal coordinate as $x_i^n$. This string is denoted by $x^{\tilde{n}}_i, \dots,x^{n}_i$ and $\b$ acts by shifting all these particles to the left by one. In the picture $N = n = 4$, $k =  \tilde{n} = 2$ and $i = 1$. }
  \label{S2_1}
\end{figure}

We need to show that $\tilde{\ell} \in \XX$ and that (\ref{PR2}) if $\tilde{n} = k$ or (\ref{PR1})  if $n \geq \tilde{n} \geq k+1$ have simple poles at $z = s$. 

To prove that $\tilde{\ell} \in \XX$ we need to show that $\tilde{\ell}$ satisfies the interlacing condition in the definition of $\XX$ and that $M \geq \tilde{\ell}_q^j + q \cdot \theta \geq 0$ for $k \leq j \leq N$ and $1 \leq q \leq j$. From the definition of $\tilde{\ell}$ we see that the only way that the interlacing condition can be violated is if one of the following holds:
\begin{enumerate}
\item for some $m \in \llbracket \tilde{n}, n \rrbracket$ we have $\ell_{i+1}^m \geq s - \theta$;
\item $n \leq N-1$ and $\ell_{i+1}^{n+1} \geq s-\theta$;
\item $\tilde{n} \geq k+1$ and $\ell_{i}^{\tilde{n}-1} \geq s$.
\end{enumerate}
Note that if either condition (1) or (2) held we would have by the interlacing condition in the definition of $\XX$ that either $n = N$ and $\ell_{i+1}^N = s- \theta$, in which case (\ref{PR4})  has no pole at $z =s $ due to the factor $z - \ell_{i+1}^N - \theta$ in the numerator, or $N-1 \geq n$ and $\ell_{i+1}^{n+1} = s-\theta$, in which case (\ref{PR3})  has no pole at $z = s$ due to the factor $z - \ell_{i+1}^{n+1} - \theta$ in the numerator. We conclude that if either condition (1) or (2) held we would obtain a contradiction with the assumption that $(\ell, n) \in \Sp^{i, n}$. We also see that condition (3) cannot hold by the minimality assumption in the definition of $\tilde{n}$. Overall, we conclude that $\tilde{\ell}$ satisfies the interlacing conditions.

We next show that $M \geq \tilde{\ell}_q^j + q \cdot \theta \geq 0$ for $k \leq j \leq N$ and $1 \leq q \leq j$. Since the latter inequalities hold for $\ell$, we see that it suffices to show that $s + i \cdot \theta \geq 1$. If $N -1 \geq n \geq k$ the latter is clear since from our earlier discussion we have $  s + i \cdot \theta - 1 \geq \ell_{i+1}^{n+1}  + (i+1)\cdot \theta \geq 0$. If $N = n$ and $n \geq i+1$ we have $  s + i \cdot \theta - 1 \geq \ell_{i+1}^{N}  + (i+1) \cdot \theta \geq 0$. Finally, if $N = n = i$ we have $s + i \cdot \theta \geq 1$ since in this case $s + N \cdot \theta \in \mathbb{Z}_{\geq 0}$ and we assumed $s \neq -N \cdot \theta$.\\

Our work in the last two paragraphs shows that $\tilde{\ell} \in \XX$. We next show that (\ref{PR2}) if $\tilde{n} = k$ or (\ref{PR1})  if $n \geq \tilde{n} \geq k+1$ have simple poles at $z = s$. 

Suppose first that $\tilde{n} = k$. In this case, we have that $1\leq i \leq k$ and $\ell_i^m = s$ for $m \in \llbracket k, n \rrbracket$ (this is by the definition of $\b(\ell, n) $). In particular, we see that in (\ref{PR2}) the denominator vanishes at $z = s$ due to the factor $z -\tilde{\ell}_i^k - 1$. In addition, the numerator in (\ref{PR2})  does not vanish at $z = s$. To see the latter note that if $1 \leq p \leq i-1$ we have
$$s - \tilde{\ell}_p^k + \theta - 1 =s - {\ell}_p^k + \theta - 1 \leq s - \ell_{i-1}^k + \theta - 1 \leq s - \ell_{i}^k -1 = -1.$$
Also for $k \geq p \geq i$ we have $s - \tilde{\ell}_p^k + \theta - 1 \geq s - \tilde{\ell}_i^k + \theta - 1 = \theta$.
So none of the factors in the numerator in (\ref{PR2}) vanish. In particular, we see that (\ref{PR2}) has a simple pole at $z = s$.

Suppose next that $n  \geq \tilde{n} \geq k+1$. In this case we have that $\ell^m_i = s$ for $m \in \llbracket \tilde{n}, n \rrbracket$ and $\ell^{\tilde{n}-1}_i \leq s - 1$. The latter implies that the denominator in (\ref{PR1}) has a simple zero at $z = s$ due to the factor $z - \tilde{\ell}_i^{\tilde{n}} - 1$. One also observes that the numerator in (\ref{PR1}) does not vanish. Indeed, if $1\leq p \leq i-1$ we have by the definition of $\b$ that
$$s - \tilde{\ell}_p^{\tilde{n} -1} + \theta - 1  =  s - {\ell}_p^{\tilde{n}-1} + \theta - 1 \leq s - {\ell}_{i-1}^{\tilde{n}-1} + \theta - 1 \leq s - {\ell}_{i}^{\tilde{n}}- 1 = -1.$$
If $\tilde{n}-1 \geq p \geq i$ we have by the definition of $\b$ that
$$s - \tilde{\ell}_p^{\tilde{n}-1} + \theta - 1 = s - {\ell}_p^{\tilde{n}-1} + \theta - 1 \geq s - \tilde{\ell}_{i}^{\tilde{n}-1} + \theta - 1 \geq \theta,$$
where we used $\tilde{\ell}_{i}^{\tilde{n}-1} \leq s - 1$. The latter observations show that (\ref{PR1}) has a simple pole at $z = s$. \\

{\bf \raggedleft Step 5.} In this step we show that the map $\b$ from Step 4 is a bijection from $\sqcup_{j = k}^N \Sp^{i,j}$ to $\sqcup_{j = k}^N \Sm^{i,j}$.

Suppose first that $\b(\ell, n) = \b(\rho, r) = (\tilde{\ell}, \tilde{n})$ for some $(\ell, n) , (\rho, r)  \in \sqcup_{j = k}^N \Sp^{i,j}$  and assume without loss of generality that $n \geq r$. We wish to show that $\ell = \rho$ and $n = r$. 

Suppose for the sake of contradiction that $n > r$. We have that $\ell_i^j = s$ for $j = \tilde{n}, \dots, n$ and so $\tilde{\ell}_i^{r+1} = s-1$ (here we used $n > r \geq \tilde{n}$). On the other hand, $\rho_{i}^{r+1} = \tilde{\ell}^{r+1}_i = s - 1$ (from $\b(\rho, r) = (\tilde{\ell}, \tilde{n})$ ) while $\rho_i^r = s$ (from the assumption $(\rho, r) \in \Sp^{i,r}$), which contradicts the fact that $\rho_{i}^{r+1} \geq \rho_i^r$. The contradiction arose from our assumption $n > r$ and so we conclude that $n = r$.

By definition, we have that $n \geq \tilde{n} \geq i $, and $\ell_p^j = \tilde{\ell}_p^j = \rho_p^j$ provided that $k \leq j \leq N$, $1 \leq p \leq j$ and $p \neq i$. The latter equality still holds if $p = i$ and $j \not \in \llbracket \tilde{n}, n\rrbracket$ and for $j \in \llbracket \tilde{n}, n\rrbracket$ we have $\ell_i^j = \tilde{\ell}_i^j +1 = \rho_i^j$. We conclude that $\rho = \ell$. This proves $(\ell, n) = (\rho, r) $ and so $\b$ is injective.\\

Suppose now that $ (\tilde{\ell}, \tilde{n}) \in \sqcup_{j = k}^N \Sm^{i,j}$. We wish to find $(\ell, n) \in  \sqcup_{j = k}^N\Sp^{i,j}$ with $\b(\ell, n) =  (\tilde{\ell}, \tilde{n})$.

Since $ (\tilde{\ell}, \tilde{n}) \in \sqcup_{j = k}^N \Sm^{i,j}$ we must have $(\tilde{\ell}, \tilde{n})  \in \Sm^{i,\tilde{n}}$, $1 \leq i \leq \tilde{n}$ and $\tilde{\ell}^{\tilde{n}}_i = s - 1$. Let $n$ be the largest index in $\llbracket \tilde{n}, N\rrbracket$ such that $s -  1 = \tilde{\ell}^{n}_i = \tilde{\ell}^n_i$. We define the configuration ${\ell} =({\ell}^N, \dots, {\ell}^k)$ through
$${\ell}^m = \tilde{\ell}^m \mbox{ for $m \not \in \llbracket \tilde{n}, n \rrbracket$, and } {\ell}^m= (\tilde\ell^m_1, \dots, \tilde\ell^m_{i-1}, \tilde\ell^m_{i} +1, \tilde\ell^m_{i+1} , \dots, \tilde\ell^m_{m}) \mbox{ for $m \in\llbracket \tilde{n}, n \rrbracket $}.$$
We next proceed to check that $(\ell,n) \in \Sp^{i, n}$ and $\b(\ell, n) =  (\tilde{\ell}, \tilde{n})$.

We need to show that ${\ell} \in \XX$ and that (\ref{PR4}) if $n = N$ or (\ref{PR3})  if $N-1 \geq n $ have simple poles at $z = s$. In order to prove that ${\ell} \in \XX$ we need to show that ${\ell}$ satisfies the interlacing condition in the definition of $\XX$ and that $M \geq {\ell}_q^j + q \cdot \theta \geq 0$ for $k \leq j \leq N$ and $1 \leq q \leq j$. From the definition of ${\ell}$ we see that the only way that the interlacing condition can be violated is if one of the following holds:
\begin{enumerate}
\item for some $m \in \llbracket \tilde{n}, n \rrbracket$ we have $\tilde{\ell}_{i-1}^m \leq s - 1 + \theta$;
\item $n \leq N-1$ and $\tilde{\ell}_{i}^{n+1} \leq s-1$;
\item $\tilde{n} \geq k+1$ and $\tilde{\ell}_{i-1}^{\tilde{n}-1} \leq s -1 + \theta$.
\end{enumerate}

Note that if either condition (1) or (3) held we would have by the interlacing condition in the definition of $\XX$ that either $\tilde{n} = k$ and $\ell_{i-1}^k = s-  1 + \theta$, in which case (\ref{PR2})  has no pole at $z =s $ due to the factor $z - \ell_{i-1}^k + \theta - 1$ in the numerator, or $\tilde{n} \geq k+1$ and $\tilde{\ell}_{i-1}^{\tilde{n}-1} = s - 1 + \theta$, in which case (\ref{PR1})  has no pole at $z = s$ due to the factor $z - \tilde{\ell}_{i-1}^{\tilde{n}-1} + \theta - 1$ in the numerator. We conclude that if either condition (1) or (3) held we would obtain a contradiction with the assumption that $(\tilde{\ell}, \tilde{n}) \in \Sm^{i, n}$. We also see that condition (2) cannot hold by the maximality assumption in the definition of $n$. Overall, we conclude that ${\ell}$ satisfies the interlacing conditions.

We next show that $M \geq {\ell}_q^j + q \cdot  \theta \geq 0$ for $k \leq j \leq N$ and $1 \leq q \leq j$. Since the latter inequalities hold for $\tilde{\ell}$, we see that it suffices to show that $s + i \cdot \theta \leq M$. If $i \geq 2$ the latter is clear since from our earlier discussion we have $  s + i \cdot \theta  \leq \tilde{\ell}_{i-1}^{\tilde{n}}  + (i-1) \cdot \theta  \leq M$. If $i = 1$ and $n \leq N-1$ we have $  s + \theta  \leq \tilde{\ell}_{1}^{n+1}  + \theta \leq M$. Finally, if $i = 1$ and $N = n$ we have $s + \theta \leq M$ since in this case $s + \theta \in \llbracket 1, M + 1\rrbracket$ and we assumed $s \neq s_M = M+1 - \theta$.\\

Our work in the last two paragraphs shows that ${\ell} \in \XX$. We next show that that (\ref{PR4}) if $n = N$ or (\ref{PR3})  if $N-1 \geq n $ have simple poles at $z = s$. Suppose that $N = n$. Then we have for $1 \leq p \leq i$ that 
$$s - \ell_p^N - \theta \leq s - \ell_i^N - \theta = -\theta,$$
while for $N \geq p \geq i + 1$ we have
$$s - \ell_p^N - \theta \geq s - \ell_{i+1}^N - \theta = s - \tilde\ell_{i+1}^N - \theta \geq s - \tilde{\ell}_{i}^N = 1.$$
The latter implies that none of the factors in the numerator in (\ref{PR4}) vanish and so the expression in (\ref{PR4}) has a simple pole at $z = s$.

Suppose next that $N -1 \geq n \geq \tilde{n}$. Then we have for $1 \leq p \leq i$ that 
$$s - \ell_{p}^{n+1} - \theta = s - \tilde{\ell}^{n+1}_p - \theta \leq s - \tilde{\ell}^{n+1}_i - \theta \leq - \theta,$$
where we used that $\tilde{\ell}^{n+1}_i \geq s$ by the definition of $n$. For $n + 1 \geq p \geq i + 1$ we have
$$s - \ell_{p}^{n+1} - \theta = s - \tilde{\ell}^{n+1}_p - \theta \geq s - \tilde{\ell}^{n+1}_{i+1} - \theta \geq  s - \tilde{\ell}^{n}_{i}  = 1.$$
The latter implies that none of the factors in the numerator in (\ref{PR3}) vanish and so the expression in (\ref{PR3}) a simple pole at $z = s$.\\

Our work so far shows that $(\ell, n) \in  \sqcup_{j = k}^N\Sp^{i,j}$. What remains is to show that $\b(\ell, n) =  (\tilde{\ell}, \tilde{n})$. The last statement is clear by the definitions of $(\ell, n)$ and $\b$ once we use $\tilde{\ell}^{\tilde{n}-1}_i \leq \tilde{\ell}^{\tilde{n}}_i = s - 1$. 

Our work in this step shows that $\b$ is both injective and surjective, hence bijective.\\

{\bf \raggedleft Step 6.} In this final step we show that (\ref{ER1}), (\ref{ER2}), (\ref{ER3}) and (\ref{ER4}) all hold.

We have that the right side of (\ref{ER1}) is equal to 
\begin{equation*}
\begin{split}
& \left[ \frac{ \phi_1^k(s)}{\phi_1^{N+1}(s)} \cdot  \frac{\P(\tilde{\ell})}{\P({\ell})} \cdot \prod_{p = 1, p \neq i}^{k}\frac{s- {\ell}^{k}_p + \theta - 1}{s - {\ell}^{k}_p - 1} \cdot \prod_{p = 1, p \neq i}^N\frac{s - \ell^N_p}{s- \ell^N_p -\theta}  - 1 \right] \cdot \phi_1^{N+1}(s) \P(\ell)    \\
& \times  \prod_{p = 1, p \neq i}^N\frac{s- \ell^N_p -\theta}{s - \ell^N_p}  = \phi_1^{N+1}(s) \P(\ell) \cdot   \prod_{p = 1, p \neq i}^N\frac{s- \ell^N_p -\theta}{s - \ell^N_p}  \left[ \frac{ \phi_1^k(s)}{\phi_1^{N+1}(s)} \cdot  \prod_{j = k}^{N} \frac{w_j(s-1)}{w_j(s)}  - 1 \right] = 0,
\end{split}
\end{equation*}
where in the first equality we used (\ref{TR4}) and in the second we used (\ref{eq:phi}). This proves (\ref{ER1}).

We have that the right side of (\ref{ER2}) is equal to 
\begin{equation*}
\begin{split}
&  \left[\frac{\phi_1^{\tilde{n}}(s)}{\phi_1^{N+1}(s)} \cdot \frac{\P(\tilde{\ell})}{\P({\ell})}  \cdot \prod_{p = 1, p \neq i}^{\tilde{n}} \frac{s- \tilde{\ell}^{\tilde{n}}_p -\theta}{s - \tilde{\ell}_p^{\tilde{n}} - 1}\prod_{p = 1}^{\tilde{n}-1}  \frac{s-  \tilde{\ell}_p^{\tilde{n}-1} + \theta - 1}{s -  \tilde{\ell}_p^{\tilde{n}-1}}\prod_{p = 1, p \neq i}^N\frac{s - \ell^N_p}{s- \ell^N_p -\theta}   - 1 \right] \cdot \phi_1^{N+1}(s)  \\
& \times \P(\ell) \hspace{-3mm} \prod_{p = 1, p \neq i}^N \hspace{-2mm} \frac{s- \ell^N_p -\theta}{s - \ell^N_p} = \left[\frac{\phi_1^{\tilde{n}}(s)}{\phi_1^{N+1}(s)} \cdot \prod_{j = \tilde{n}}^{N} \frac{w_j(s-1)}{w_j(s)} - 1   \right]\phi_1^{N+1}(s) \P(\ell) \hspace{-2mm}  \prod_{p = 1, p \neq i}^N\hspace{-2mm} \frac{s- \ell^N_p -\theta}{s - \ell^N_p} = 0 , 
\end{split}
\end{equation*}
where in the first equality we used (\ref{TR2}) and in the second we used (\ref{eq:phi}). This proves (\ref{ER2}).

We have that the right side of (\ref{ER3}) is equal to 
\begin{equation*}
\begin{split}
& \phi_1^{n+1}(s) \P(\ell)    \prod_{p = 1}^{n+1} \frac{s- \ell^{n+1}_p -\theta}{s - \ell_p^{n+1} - 1}\cdot \prod_{p = 1, p \neq i}^{n}  \frac{s-  \ell_p^{n} + \theta - 1}{s -  \ell_p^{n}} \times   \\
&  \left[ \frac{\phi_1^{\tilde{n}}(s)}{\phi_1^{n + 1}(s) }  \frac{\P(\tilde{\ell})}{ \P(\ell)  } \hspace{-1mm}  \prod_{p = 1, p \neq i}^{\tilde{n}} \hspace{-2mm} \frac{s- {\ell}^{\tilde{n}}_p -\theta}{s - {\ell}_p^{\tilde{n}} - 1} \prod_{p = 1}^{\tilde{n}-1}  \frac{s-  {\ell}_p^{\tilde{n}-1} + \theta - 1}{s -  {\ell}_p^{\tilde{n}-1}} \prod_{p = 1}^{n+1} \frac{s - \ell_p^{n+1} - 1}{s- \ell^{n+1}_p -\theta} \hspace{-1mm} \prod_{p = 1, p \neq i}^{n}\hspace{-2mm}  \frac{s -  \ell_p^{n}}{s-  \ell_p^{n} + \theta - 1} - 1\right]   \\
& =  \phi_1^{n+1}(s) \P(\ell)    \prod_{p = 1}^{n+1} \frac{s- \ell^{n+1}_p -\theta}{s - \ell_p^{n+1} - 1}\cdot \prod_{p = 1, p \neq i}^{n}  \frac{s-  \ell_p^{n} + \theta - 1}{s -  \ell_p^{n}} \left[\frac{\phi_1^{\tilde{n}}(s)}{\phi_1^{n+1}(s) }\prod_{j = \tilde{n}}^{n} \frac{w_j(s-1)}{w_j(s)} - 1  \right] = 0,\\
\end{split}
\end{equation*}
where in the first equality we used (\ref{TR1}) and in the second we used (\ref{eq:phi}). This proves (\ref{ER3}).

We have that the right side of (\ref{ER4}) is equal to 
\begin{equation*}
\begin{split}
& \phi_1^{n+1}(s) \P(\ell)    \prod_{p = 1}^{n+1} \frac{s- \ell^{n+1}_p -\theta}{s - \ell_p^{n+1} - 1}\cdot \prod_{p = 1, p \neq i}^{n}  \frac{s-  \ell_p^{n} + \theta - 1}{s -  \ell_p^{n}}  \\
& \times \left[ \frac{\phi_1^k(s)}{\phi_1^{n+1}(s)} \cdot  \frac{\P(\tilde{\ell})}{\P(\ell)  } \cdot \prod_{p = 1, p \neq i}^{k}\frac{s- \tilde{\ell}^{k}_p + \theta - 1}{s - \tilde{\ell}^{k}_p - 1}  \prod_{p = 1}^{n+1} \frac{s - \ell_p^{n+1} - 1}{s- \ell^{n+1}_p -\theta} \cdot \prod_{p = 1, p \neq i}^{n}  \frac{s -  \ell_p^{n}}{s-  \ell_p^{n} + \theta - 1} - 1\right]   \\
& = \phi_1^{n+1}(s) \P(\ell)    \prod_{p = 1}^{n+1} \frac{s- \ell^{n+1}_p -\theta}{s - \ell_p^{n+1} - 1} \prod_{p = 1, p \neq i}^{n}  \frac{s-  \ell_p^{n} + \theta - 1}{s -  \ell_p^{n}}  \left[  \frac{\phi_1^k(s)}{\phi_1^{n+1}(s)}  \cdot \prod_{j = k}^{n} \frac{w_j(s-1)}{w_j(s)} - 1 \right] = 0, \\
\end{split}
\end{equation*}
where in the first equality we used (\ref{TR3}) and in the second we used (\ref{eq:phi}). This proves (\ref{ER4}).\\

{\bf \raggedleft Analyticity of $R_2.$} The proof of analyticity of $R_2$ is similar to that for $R_1$. For clarity we split the proof into six steps.\\

{\bf \raggedleft Step 1.} The function $R_2$ has possible poles at $s =  a- b \cdot \theta$, where $b \in \llbracket 1, N \rrbracket$ and $a \in \llbracket 0, M + 1\rrbracket$. In this step we show that all of these poles are simple. We also show that the residues at $z = - N \cdot \theta$ and $z= s_M$ are equal to $0$, i.e. $R_2$ is analytic near these points. 

Firstly, note that the function $\R_2(z)$ only has simple poles at $z = - N \cdot \theta$ and $z = s_M$ by definition. Next, we have for each $j \in \llbracket k, N \rrbracket$ that $\ell_1^j > \ell_2^j > \cdots > \ell_j^j$ and so the expectations in the first line of (\ref{eq:mN2_gen}) can produce only simple poles. Similarly, each of the two products inside the expectation in the second line of (\ref{eq:mN2_gen}) can only have simple poles. Let us check that it can not happen that these two products share the same pole. Assuming the contrary, there must exist $j \in \llbracket k+1, N\rrbracket$ and $\ell \in \XX$ such that $s  + (N-j) \cdot \theta = \ell_q^j$ for some $q \in \llbracket 1, j\rrbracket$ and $s + (N-j+1) \cdot \theta = \ell_{r}^{j-1} + 1$ for some $r \in \llbracket 1, j - 1\rrbracket$. The last statement implies
$$ \ell^j_{q}-\ell^{j-1}_{r}=\lambda^j_q-\lambda^{j-1}_r +(r - q)\theta=1 - \theta.$$
Since $\lambda^{j-1}_r\geq \lambda^{j}_q$ for $r < q$ and $\lambda^{j}_q \geq \lambda^{j-1}_r$ when $q \leq  r$ the above equality can only hold if $r \geq q$, $\lambda^j_q=\lambda^{j-1}_r$  and $\theta = (r - q + 1)^{-1}$ (here we used that $\ell \in \XX$ and so $\ell^j \succeq \ell^{j-1}$ as in (\ref{S1GenState})). From the interlacing $\ell^j \succeq \ell^{j-1}$ we conclude that necessarily $\lambda^{j}_{r}=\lambda^{j-1}_r$, which implies $\ell^j_r = \ell^{j-1}_{r}.$ This produces an extra factor $z - s$ from the $z - \ell_{r}^{j} + (N-j+1) \theta -1$ in the first product in the second line of (\ref{eq:mN2_gen}), which means that the order of the pole at $s$ is reduced from $2$ to $1$. Thus indeed, all the poles are simple.\\

We next check that the residues at $z = - N \cdot \theta$ and $z= s_M$ are equal to $0$. Note that the residue at $z = s_M$ has two contributions, coming from the first expectation in the first line of (\ref{eq:mN2_gen}), when $\ell_1^N = s_M-1$, and from the first line of (\ref{NERem2}), and they cancel. Similarly, the residue at $z = -N \cdot \theta$ gets contributions from the second expectation in the first line of (\ref{eq:mN2_gen}), when $\ell^k_k = -k \cdot \theta$, from the expectations on the second line of (\ref{eq:mN2_gen}), when $\ell_j^j = - j \cdot \theta$, and from the second and third lines of (\ref{NERem2}), and they exactly cancel. This proves the analyticity of $R_2$ near $-N \cdot \theta$ and $s_M$.\\

{\bf \raggedleft Step 2.} In the next five steps we will compute the residue at any $s =  a - b \cdot \theta$ such that $s\not \in \{-N \cdot \theta, s_M\}$ and verify that it is zero. In this step we obtain a useful formula for this residue, which will be used later to show that it is zero.

We first expand all the expectations to get
\begin{equation}\label{eq:expandV2}
\begin{split}
&R_2(z)=\phi_2^{N+1}(z)  \sum_{\ell \in \XX}  \P(\ell)\cdot  \prod_{p = 1}^N\frac{z- \ell^N_p + \theta-1}{z - \ell^N_p - 1}   \\
& + \phi^k_2(z)  \sum_{\ell \in \XX} \P(\ell) \prod_{p = 1}^{k}\frac{z- \ell^{k}_p + (N-k-1)\theta}{z - \ell^{k}_p + (N-k) \theta}  +  \frac{\theta}{1-\theta}  \sum\limits_{j=k+1}^{N}\phi_2^{j}(z)  \\
& \times \sum_{\ell \in \XX } \P(\ell)  \prod_{p = 1}^{j} \frac{z- \ell^{j}_p+(N-j+1)\theta -1}{z - \ell_p^{j} +(N-j)\theta}    \prod_{p = 1}^{j-1}  \frac{z-\ell_p^{j-1} +(N-j)\theta}{z -  \ell_p^{j-1}+(N-j+1) \theta-1} -\R_2(z).
\end{split}
\end{equation}

For each $j \in \llbracket k, N-1\rrbracket$ and $i \in \llbracket 1, j \rrbracket$ we let $\Tm^{i,j}$ denote the set of pairs $(\ell,j)$, such that $\ell \in \XX$, $\ell^j_{j -i + 1} = s + (N-j) \cdot \theta- 1$ and
\begin{equation}\label{QR1}
\prod_{p = 1}^{j+1}(z- \ell^{j+1}_p+(N-j) \cdot \theta -1)   \prod_{p = 1}^{j}  \frac{1}{z -  \ell_p^{j}+(N-j) \cdot \theta-1}
\end{equation}
has a simple pole at $z= s$. If $i \geq j+1$ we let $\Tm^{i,j}$ denote the empty set.

For $i \in  \llbracket 1, N \rrbracket$ we let $\Tm^{i,N}$ be the sets of pairs $(\ell, N)$ such that $\ell \in \XX$, $\ell^N_{N - i + 1} = s - 1$ and 
\begin{equation}\label{QR2}
 \prod_{p = 1}^N\frac{z- \ell^N_p +\theta-1}{z - \ell^N_p-1} 
\end{equation}
has a simple pole at $z = s$. 

For each $j \in \llbracket k + 1, N \rrbracket$ and $i \in \llbracket 1, j \rrbracket$ we let $\Tp^{i,j}$ denote the set of pairs $(\ell,j)$, such that $\ell \in \XX$, $\ell^{j}_{j - i + 1} = s + (N-j) \cdot \theta$ and
\begin{equation}\label{QR3}
\prod_{p = 1}^{j} \frac{1}{z - \ell_p^{j} +(N-j) \cdot \theta}    \prod_{p = 1}^{j-1} (z-\ell_p^{j-1} +(N-j) \cdot\theta)
\end{equation}
has a simple pole at $z = s$. If $i \geq j + 1$ we let $\Tp^{i,j}$ denote the empty set. 

For $i  \in \llbracket 1, k \rrbracket $ we let $\Tp^{i,k}$ be the  sets of pairs $(\ell, k)$ with $\ell \in \XX$, $\ell_{k - i + 1}^k = s + (N-k) \cdot \theta$ and 
\begin{equation}\label{QR4}
\prod_{p = 1}^{k}\frac{z- \ell^{k}_p+(N-k-1)\cdot \theta }{z - \ell^{k}_p +(N-k) \cdot \theta}
\end{equation}
has a simple pole at $z = s$. If $i \geq k+1$ we let $\Tp^{i,k}$ denote the empty set. \\

In view of our work in Step 1, and (\ref{eq:expandV2}) we have the following formula for the residue at $z = s$
\begin{equation}\label{eq:expand2V2}
\begin{split}
&\sum_{i = 1}^N \phi_2^{N+1}(s)  \sum_{(\ell,N) \in \Tm^{i,N}}  \P(\ell)\cdot  \theta \cdot   \prod_{p = 1, p \neq N - i + 1}^N\frac{s- \ell^N_p +\theta-1}{s - \ell^N_p-1} \hspace{2mm} \\
& + \sum_{i = 1}^N  \phi^k_2(s)  \sum_{(\ell, k) \in \Tp^{i,k}}  \P(\ell) \cdot (-\theta) \cdot \prod_{p = 1, p \neq k - i + 1}^{k}\frac{s- \ell^{k}_p+(N-k-1) \cdot \theta }{s - \ell^{k}_p +(N-k) \cdot \theta}     \\
&+ \frac{\theta}{1-\theta} \sum_{ i = 1}^N  \sum_{j=k+1}^{N}\phi_2^{j}(s) \sum_{(\ell,j) \in \Tp^{i,j} } \hspace{-2mm} \P(\ell)  (\theta - 1)  \\
&  \times \prod_{p = 1, p \neq j -  i + 1}^{j}\hspace{-1mm} \frac{s- \ell^{j}_p+(N-j+1) \cdot \theta -1}{s - \ell_p^{j} +(N-j) \cdot \theta}\cdot \prod_{p = 1}^{j-1}   \frac{s-\ell_p^{j-1} +(N-j) \cdot \theta}{s -  \ell_p^{j-1}+(N-j+1)\cdot  \theta-1}  \\
&+ \frac{\theta}{1-\theta} \sum_{ i = 1}^N  \sum_{j=k}^{N-1}\phi_2^{j + 1}(s) \sum_{(\ell,j ) \in \Tm^{i,j} } \P(\ell)  (1 - \theta)   \\
& \times \prod_{p = 1}^{j + 1}  \frac{s- \ell^{j+1}_p+(N-j) \cdot \theta -1}{s - \ell_p^{j+1} +(N-j-1) \cdot \theta}\cdot \prod_{p = 1, p \neq j - i + 1}^{j}   \frac{s-\ell_p^{j} +(N-j-1) \cdot \theta}{s -  \ell_p^{j}+(N-j) \cdot \theta-1},
\end{split}
\end{equation}  
where we used that $\R_2(z)$ is analytic away from the points $-N \cdot \theta$ and $s_M$ and that $s\not \in \{- N \cdot \theta, s_M\}$ by assumption (hence $\R_2(z)$ does not contribute to the residue at $z = s$). We also mention that our convention is that the sum over an empty set is zero.

Equation (\ref{eq:expand2V2}) is the main output of this step and below we show that the total sum is zero.\\

{\bf \raggedleft Step 3.} The goal of this step is to show that the sum over all terms in (\ref{eq:expand2V2}) for a fixed $i \in \llbracket 1, N \rrbracket$ vanishes, which would of course imply that the full sum vanishes. To accomplish this we construct a bijection $\bb$ 
\begin{equation}\label{FR0}
\bb: \sqcup_{j = k}^N \Tp^{i,j} \rightarrow  \sqcup_{j = k}^N \Tm^{i,j}, \mbox{ such that }\bb \left(\Tp^{i,j_1} \right)\subseteq \sqcup_{j =j_1}^N\Tm^{i,j}\mbox{ for $j_1 \in \llbracket k, N \rrbracket$ and }
\end{equation}
 if $(\tilde{\ell}, \tilde{n}) = \bb(\ell, n)$ the following four equations all hold. We first have that

\begin{equation}\label{FR1}
0 = \phi_2^{N+1}(s) \P(\tilde\ell)     \hspace{-6mm} \prod_{p = 1, p \neq N - i + 1}^N \hspace{-2mm} \frac{s- \tilde{\ell}^N_p +\theta-1}{s - \tilde{\ell}^N_p-1} -  \phi_2^k(s)  \P({\ell})   \hspace{-6mm} \prod_{p = 1, p \neq k - i + 1}^{k}\frac{s- \ell^{k}_p+(N-k-1)\theta }{s - \ell^{k}_p +(N-k)\theta} ,
\end{equation}
if $(\ell, n) \in \Tp^{i, k}$ and $(\tilde{\ell}, \tilde{n}) \in \Tm^{i,N}$. Secondly, we have
\begin{equation}\label{FR2}
\begin{split}
0 = \hspace{2mm}&\phi_2^{N+1}(s) \P(\tilde{\ell})   \prod_{p = 1, p \neq N - i + 1}^N\frac{s- \tilde{\ell}^N_p +\theta-1}{s - \tilde{\ell}^N_p-1} - \phi_2^{n}(s) \P({\ell})    \\
& \times  \prod_{p = 1, p \neq n - i+ 1}^{n} \frac{s- \ell^{n}_p+(N-n+1)\theta -1}{s - \ell_p^{n} +(N-n)\theta}    \prod_{p = 1}^{n-1}  \frac{s-\ell_p^{n-1} +(N-n)\theta}{s -  \ell_p^{n-1}+(N-n+1) \theta-1},
\end{split}
\end{equation}
if $(\ell, n) \in \Tp^{i, n}$ and $(\tilde{\ell}, \tilde{n}) \in \Tm^{i, N}$ for $N \geq n \geq k+1$. Thirdly, we have
\begin{equation}\label{FR3}
\begin{split}
&0 =\phi_2^{\tilde{n} + 1}(s) \P(\tilde\ell)  \cdot  \prod_{p = 1}^{\tilde{n}+1} \frac{s- \ell^{\tilde{n}+1}_p+(N-\tilde{n})\theta -1}{s - \ell_p^{\tilde{n}+1} +(N-\tilde{n} - 1)\theta}    \prod_{p = 1, p \neq \tilde{n} -  i + 1}^{\tilde{n}}  \frac{s-\ell_p^{\tilde{n}} +(N-\tilde{n} - 1)\theta}{s -  \ell_p^{\tilde{n}}+(N-\tilde{n}) \theta-1}   \\
& - \phi_2^{n}(s) \P(\ell)  \cdot  \prod_{p = 1, p \neq n - i+ 1}^{n} \frac{s- \ell^{n}_p+(N-n+1)\theta -1}{s - \ell_p^{n} +(N-n)\theta}    \prod_{p = 1}^{n-1}  \frac{s-\ell_p^{n-1} +(N-n)\theta}{s -  \ell_p^{n-1}+(N-n+1) \theta-1},
\end{split}
\end{equation}
if $(\ell, n) \in \Tp^{i,n}$ and $(\tilde{\ell}, \tilde{n}) \in \Tm^{i, \tilde{n}}$ with $N - 1 \geq \tilde{n} \geq n \geq k + 1$. Finally, we have
\begin{equation}\label{FR4}
\begin{split}
0 = \hspace{2mm}&\phi_2^{\tilde{n} + 1}(s) \P(\tilde\ell)  \cdot  \prod_{p = 1}^{\tilde{n}+1} \frac{s- \ell^{\tilde{n}+1}_p+(N-\tilde{n})\theta -1}{s - \ell_p^{\tilde{n}+1} +(N-\tilde{n} - 1)\theta}    \prod_{p = 1, p \neq \tilde{n} - i + 1}^{\tilde{n}}  \frac{s-\ell_p^{\tilde{n}} +(N-\tilde{n} - 1)\theta}{s -  \ell_p^{\tilde{n}}+(N-\tilde{n}) \theta-1}  \\
& - \phi_2^k(s)  \P({\ell})  \cdot \prod_{p = 1, p \neq k -  i + 1}^{k}\frac{s- \ell^{k}_p+(N-k-1)\theta }{s - \ell^{k}_p +(N-k)\theta} ,
\end{split}
\end{equation}
if $(\ell, n) \in \Tp^{i,k}$ and $(\tilde{\ell}, \tilde{n}) \in \Tm^{i,\tilde{n}}$ for $N-1 \geq \tilde{n} \geq k$.

If we can find a bijection $\bb$ as in (\ref{FR0}) that satisfies (\ref{FR1}), (\ref{FR2}), (\ref{FR3}) and (\ref{FR4}) we would conclude that the sum in (\ref{eq:expand2V2}) is zero since the contributions from the first, fifth and sixth line in (\ref{eq:expand2V2}) exactly cancel with those from the second, third and fourth line. \\

We next focus on constructing the map $\bb$, showing that it is a bijection of the sets in (\ref{FR0}) and that it satisfies equations (\ref{FR1}), (\ref{FR2}), (\ref{FR3}) and (\ref{FR4}).\\

{\bf \raggedleft Step 4.} In this step we construct the map $\bb$ on $\sqcup_{j = k}^N \Tp^{i,j}$, and show that 
$$\bb\left(\Tp^{i,j_1} \right)\subseteq \sqcup_{j =j_1}^{N} \Tm^{i,j}\mbox{ for $j_1 \in \llbracket k, N \rrbracket$}.$$

Suppose that $(\ell, n) \in \Tp^{i, n}$ for some $n \in \llbracket k, N\rrbracket$. We let $\tilde{n}$ be the largest index in $\llbracket n, N\rrbracket$ such that $\lambda_{n-i+1}^n = \lambda_{\tilde{n}-i+1}^{\tilde{n}}$. Note that by the interlacing condition in the definition of $\XX$ we have that $\lambda_{n-i+1}^n \geq\lambda_{m-i+1}^{m}$ for all $m \in \llbracket n, N \rrbracket$.

With the above choice of $\tilde{n}$ we define the configuration $\tilde{\ell} =(\tilde{\ell}^N, \dots, \tilde{\ell}^k)$ by 
$$\tilde{\ell}^m = \ell^m \mbox{ for $m \not \in \llbracket {n}, \tilde{n} \rrbracket$, and } $$
$$ \tilde{\ell}^m= (\ell^m_1, \dots, \ell^m_{m-i}, \ell^m_{m- i+1} - 1, \ell^m_{m - i + 2} , \dots, \ell^m_{m}) \mbox{ for $m \in \llbracket {n}, \tilde{n} \rrbracket $}.$$

The above two  paragraphs define $(\tilde{\ell}, \tilde{n})$ and we let $\bb(\ell, n) = (\tilde{\ell}, \tilde{n})$ for $(\ell, n) \in  \Tp^{i, n}$, see Figure \ref{S2_2}. We check that $(\tilde{\ell}, \tilde{n}) \in \Tm^{i, \tilde{n}}$, which would complete our work in this step as $\tilde{n} \geq n$ by construction.\\
\begin{figure}[h]
\centering
  \scalebox{0.7}{\includegraphics[width=0.9\linewidth]{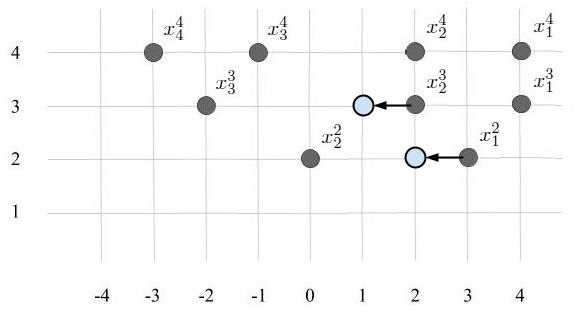}}
\captionsetup{width=.95\linewidth}
  \caption{The figure depicts the action of the map $\b$. To make the action comprehensible we explain how it works in the changed coordinates $x_i^j = \ell_i^j + i \cdot (\theta -1) $ -- this way the $x$'s all lie on the integer lattice and $x_1^j, \dots, x_j^j$ are distinct. The function $\bb$ takes as input $(\ell,n)$ and in the changed coordinates looks at the longest string of particles $x^j_{j -i + 1}$ for $N \geq j \geq n$ that lie on the line of slope $-1$ and above $x^n_{n-i+1}$. This string is denoted by $x^{n}_{n- i + 1}, \dots,x^{\tilde{n}}_{\tilde{n} - i+1}$ and $\bb$ acts by shifting all these particles to the left by one. In the picture we have $N  = 4$, $k = n = 2$, $\tilde{n} = 3$ and $i = 2$. }
  \label{S2_2}
\end{figure}

We need to show that $\tilde{\ell} \in \XX$ and that (\ref{QR2}) if $\tilde{n} = N$ or (\ref{QR1}) if $N-1 \geq \tilde{n} \geq k$ have simple poles at $z = s$. 

To prove that $\tilde{\ell} \in \XX$ we need to show that $\tilde{\ell}$ satisfies the interlacing condition in the definition of $\XX$ and that $M \geq \tilde{\ell}_q^j + q \cdot \theta \geq 0$ for $k \leq j \leq N$ and $1 \leq q \leq j$. From the definition of $\tilde{\ell}$ we see that the only way that the interlacing condition can be violated is if one of the following holds:
\begin{enumerate}
\item for some $m \in \llbracket {n}, \tilde{n} \rrbracket$ we have $\lambda_{m-i + 2}^m \geq s + (N - i + 1)\cdot \theta$;
\item $\tilde{n} \leq N-1$ and $\lambda_{\tilde{n} - i + 2}^{\tilde{n} + 1} \geq s + (N - i + 1) \cdot \theta$;
\item $n \geq k+1$ and $\lambda_{n - i +1}^{n-1} \geq s + (N - i + 1) \cdot \theta$.
\end{enumerate}
Note that if either condition (1) or (3) held we would have by the interlacing condition in the definition of $\XX$ that either $n = k$ and $\lambda_{n - i+2}^n = s + (N - i + 1) \cdot \theta$, in which case (\ref{QR4})  has no pole at $z =s $ due to the factor $z - \ell_{k - i+2}^k + (N-k-1) \cdot \theta$ in the numerator, or $n \geq k+1$ and $\lambda_{n- i + 1}^{n-1} = s + (N - i + 1)\cdot \theta$, in which case (\ref{QR3})  has no pole at $z = s$ due to the factor $z - \ell_{n- i + 1}^{n-1} + (N-n) \cdot \theta$ in the numerator. We conclude that if either condition (1) or (3) held we would obtain a contradiction with the assumption that $(\ell, n) \in \Tp^{i, n}$. We also see that condition (2) cannot hold by the maximality assumption in the definition of $\tilde{n}$. Overall, we conclude that $\tilde{\ell}$ satisfies the interlacing conditions.

We next show that $M \geq \tilde{\ell}_q^j + q \cdot \theta \geq 0$ for $k \leq j \leq N$ and $1 \leq q \leq j$. Since the latter inequalities hold for $\ell$, we see that it suffices to show that $s+ (N - i + 1) \cdot \theta \geq 1$. If $N -1 \geq \tilde{n} \geq k$ the latter is clear since from our earlier discussion we have $ s+ (N - i + 1) \cdot \theta -1 \geq \lambda_{\tilde{n} - i + 2}^{\tilde{n}+1} \geq 0$. If $N = \tilde{n}$ and $i \geq 2$ we have $   s+ (N-i+1) \cdot \theta -1 \geq \lambda_{N - i + 2}^N \geq \lambda_N^N\geq 0$. Finally, if $N = \tilde{n} $ and $i = 1$ we have $s+ (N -i + 1) \cdot \theta \geq 1$ since in this case $s \in \mathbb{Z}_{\geq 0}$ and we assumed $s \neq 0$.\\

Our work in the last two paragraphs shows that $\tilde{\ell} \in \XX$. We next show that (\ref{QR2}) if $\tilde{n} = N$ or (\ref{QR1})  if $N-1 \geq \tilde{n} \geq k$ have simple poles at $z = s$. 

Suppose first that $\tilde{n} = N$. In this case, we have that $\lambda_{m - i +1}^m = s + (N - i + 1) \cdot \theta$ for $m \in \llbracket n, N \rrbracket$ (this is by the definition of $\bb(\ell, n) $). In particular, we see that in (\ref{QR2}) the denominator vanishes at $z = s$ due to the factor $z -\tilde{\ell}_{N - i + 1}^N - 1$. In addition, the numerator in (\ref{QR2})  does not vanish at $z = s$. To see the latter note that if $1 \leq p \leq N - i$ we have
$$s - \tilde{\ell}_p^N + \theta - 1 =s - {\ell}_p^N + \theta - 1 \leq s - \ell_{N-i}^N + \theta - 1 \leq s - \ell_{N - i + 1}^N -1 = -1.$$
Also for $N \geq p \geq  N-i + 1$ we have $s - \tilde{\ell}_p^N + \theta - 1 \geq s - \tilde{\ell}_{N - i + 1}^N + \theta - 1 = \theta$.
So none of the factors in the numerator in (\ref{QR2}) vanish. In particular, we see that (\ref{QR2}) has a simple pole at $z = s$.

Suppose next that $N-1  \geq \tilde{n} \geq k$. In this case we have that $\lambda^m_{m - i + 1} = s + (N-i+1) \cdot \theta$ for $m = n, \dots, \tilde{n}$ and $\lambda^{\tilde{n}+1}_{\tilde{n} -i + 2} \leq s + (N - i + 1) \cdot \theta - 1$. The latter implies that the denominator in (\ref{QR1}) has a simple zero at $z = s$ due to the factor $z - \tilde{\ell}_{\tilde{n} - i + 1}^{\tilde{n}} + (N - \tilde{n}) \cdot \theta - 1 $. One also observes that the numerator in (\ref{QR1}) does not vanish. Indeed, if $1\leq p \leq \tilde{n} - i +1$ we have by the definition of $\bb$
$$s - \tilde{\ell}_p^{\tilde{n} + 1} + (N-\tilde{n}) \cdot \theta - 1  =  s - {\ell}_p^{\tilde{n}+ 1} +(N-\tilde{n}) \cdot \theta - 1 \leq s - {\ell}^{\tilde{n}+1}_{\tilde{n} - i + 1} +(N-\tilde{n}) \cdot \theta - 1 = - 1.$$
If $\tilde{n}+1 \geq p \geq \tilde{n} - i + 2 $ we have by the definition of $\bb $ that
$$s - \tilde{\ell}_p^{\tilde{n} + 1} + (N-\tilde{n}) \cdot \theta - 1  = s - {\ell}_p^{\tilde{n} + 1} + (N-\tilde{n}) \cdot \theta - 1   \geq s - \tilde{\ell}^{\tilde{n}+1}_{ \tilde{n} - i +2} + (N-\tilde{n}) \cdot \theta - 1   \geq \theta,$$
where we used $\lambda^{\tilde{n}+1}_{\tilde{n} - i +2} \leq s + (N - i + 1) \cdot \theta - 1$. The latter observations show that (\ref{QR1}) has a simple pole at $z = s$. \\

{\bf \raggedleft Step 5.} In this step we show that the map $\bb$ from Step 4 is a bijection from $\sqcup_{j = k}^N \Tp^{i,j}$ to $\sqcup_{j = k}^N \Tm^{i,j}$.

Suppose first that $\bb(\ell, n) = \bb(\rho, r) = (\tilde{\ell}, \tilde{n})$ for some $(\ell, n) , (\rho, r)  \in \sqcup_{j = k}^N \Tp^{i,j}$ and assume without loss of generality that $n \geq r$. We wish to show that $\ell = \rho$ and $n = r$. 

Suppose for the sake of contradiction that $n > r$. We have that $\rho_{j-i + 1}^j = s + (N -  j ) \cdot \theta$ for $j \in \llbracket r , \tilde{n} \rrbracket$ and so $\tilde{\ell}_{n - i  }^{n-1} = s + (N - n +1)\cdot \theta-1$ (here we used $\tilde{n} \geq n > r$). On the other hand, $\ell_{n- i}^{n-1} = \tilde{\ell}^{n-1}_{n-i} = s + (N - n +1) \cdot \theta-1$ (from $\bb(\ell, n) = (\tilde{\ell}, \tilde{n})$ ) while $\ell_{n-i+1}^{n} = s + (N-n) \cdot \theta$ (from the assumption $(\ell, n) \in \Tp^{i,n}$), which contradicts the fact that $\ell_{n-i}^{n-1} - \theta \geq \ell_{n-i+1}^n$. The contradiction arose from our assumption $n > r$ and so we conclude that $n = r$.

By definition, we have that $\tilde{n} \geq n \geq i $, and $\ell_p^j = \tilde{\ell}_p^j = \rho_p^j$ provided that $k \leq j \leq N$, $1 \leq p \leq j$ and $p \neq  j - i + 1$. The latter equality still holds if $p =j - i + 1$ and $j \not \in \llbracket {n},  \tilde{n}\rrbracket$ and for $j  \in \llbracket {n},  \tilde{n}\rrbracket$ we have $\ell_{j - i + 1}^j = \tilde{\ell}_{j - i + 1}^j +1 = \rho_{j - i + 1}^j$. We conclude that $\rho = \ell$. This proves $(\ell, n) = (\rho, r) $ and so $\bb$ is injective.\\

Suppose now that $ (\tilde{\ell}, \tilde{n}) \in \sqcup_{j = k}^N \Tm^{i,j}$. We wish to find $(\ell, n) \in  \sqcup_{j = k}^N \Tp^{i,j}$ with $\bb(\ell, n) =  (\tilde{\ell}, \tilde{n})$.

Let $n$ be the smallest index in $\llbracket \max( i, k+1), \tilde{n}\rrbracket$ such that $ \tilde{\ell}^{n-1}_{ n - i } + (n - i ) \cdot \theta > \tilde{\ell}^{\tilde{n}}_{\tilde{n} - i+ 1} + (\tilde{n} - i+ 1) \cdot \theta$. By the interlacing condition in the definition of $\XX$ we have that $ \tilde{\ell}^{m-1}_{ m - i } + (m - i ) \cdot \theta   \geq \tilde{\ell}^{\tilde{n}}_{\tilde{n} - i+ 1} + (\tilde{n} - i+ 1) \cdot \theta$ for all $m \in \llbracket \max( i, k+1), \tilde{n}\rrbracket$ and if we have equality for all $m \in \llbracket \max( i, k+1), \tilde{n}\rrbracket$ we set $n = k$. 

With the above choice of $n$ we define the configuration ${\ell} =({\ell}^N, \dots, {\ell}^k)$ through
$${\ell}^m = \tilde{\ell}^m \mbox{ for $m \not \in \llbracket {n},  \tilde{n}\rrbracket$, and } $$
$$ {\ell}^m= (\tilde{\ell}^m_1, \dots, \tilde{\ell}^m_{m - i}, \tilde{\ell}^m_{m -i  + 1} +1 , \tilde{\ell}^m_{m - i + 2} , \dots, \tilde{\ell}^m_{m}) \mbox{ for $m \in \llbracket {n},  \tilde{n}\rrbracket $}.$$
We next proceed to check that $(\ell,n) \in \Tp^{i, n}$ and $\bb(\ell, n) =  (\tilde{\ell}, \tilde{n})$.

We need to show that ${\ell} \in \XX$ and that (\ref{QR4}) if $n = k$ or (\ref{QR3})  if $n \geq k+1 $ have simple poles at $z = s$. In order to prove that ${\ell} \in \XX$ we need to show that ${\ell}$ satisfies the interlacing condition in the definition of $\XX$ and that $M \geq {\ell}_q^j +  q \cdot \theta \geq 0$ for $k \leq j \leq N$ and $1 \leq q \leq j$. From the definition of ${\ell}$ we see that the only way that the interlacing condition can be violated is if one of the following holds:
\begin{enumerate}
\item for some $m \in \llbracket {n},  \tilde{n}\rrbracket$ we have $\tilde{\ell}_{m - i }^m \leq s  + (N- m + 1) \cdot \theta - 1$;
\item $n \geq k+1$ and $\tilde{\ell}_{n - i  }^{n-1} \leq s + (N- n +1) \cdot \theta - 1$;
\item $\tilde{n} \leq  N - 1$ and $\tilde{\ell}_{\tilde{n} - i +1}^{\tilde{n}+1} \leq s+ (N - \tilde{n}) \cdot \theta  -1$.
\end{enumerate}
Note that if either condition (1) or (3) held we would have by the interlacing condition in the definition of $\XX$ that either $\tilde{n} = N$ and $\tilde{\ell}_{N-i}^N= s + \theta - 1$, in which case (\ref{QR2})  has no pole at $z =s $ due to the factor $z - \tilde{\ell}_{N-i}^N + \theta - 1$ in the numerator, or $N-1\geq \tilde{n} $ and $\tilde{\ell}_{\tilde{n} - i + 1}^{\tilde{n}+1} = s+ (N - \tilde{n}) \cdot \theta  -1$, in which case (\ref{QR1})  has no pole at $z = s$ due to the factor $z - \tilde{\ell}_{\tilde{n} - i + 1}^{\tilde{n}+1} + (N - \tilde{n}) \cdot \theta  -1$ in the numerator. We conclude that if either condition (1) or (3) held we would obtain a contradiction with the assumption that $(\tilde{\ell}, \tilde{n}) \in \Tm^{i, \tilde{n}}$. We also see that condition (2) cannot hold by the minimality assumption in the definition of $n$. Overall, we conclude that ${\ell}$ satisfies the interlacing conditions.

We next show that $M \geq {\ell}_p^j + p \cdot \theta \geq 0$ for $k \leq j \leq N$ and $1 \leq p \leq j$. Since the latter inequalities hold for $\tilde{\ell}$, we see that it suffices to show that $s + (N - i + 1) \cdot \theta \leq M$. If $\tilde{n}- 1 \geq i$ the latter is clear since from our earlier discussion we have $ s + (N - i + 1) \cdot \theta  \leq \tilde{\ell}_{\tilde{n} - i}^{\tilde{n}} - (\tilde{n} - i) \cdot \theta  \leq M$. If $i = \tilde{n}$ and $\tilde{n} \leq N-1$ we have $ s + (N - i + 1) \cdot \theta  \leq \tilde{\ell}_{1}^{\tilde{n}+1}  - \theta \leq M$. Finally, if $i = \tilde{n} = N$ we have $s + (N - i + 1) \cdot \theta  \leq M$ since in this case $s + (N - i + 1) \cdot \theta \in \llbracket 1, M + 1\rrbracket$ and we assumed $s \neq s_M$.\\

Our work in the last two paragraphs shows that ${\ell} \in \XX$. We next show that that (\ref{QR4}) if $n = k$ or (\ref{QR3})  if $n \geq k + 1$ have simple poles at $z = s$. Suppose that $k = n$. Then we have for $1 \leq p \leq k - i + 1$ 
$$s - \ell_p^k + (N - k -1) \cdot \theta \leq s - \ell_{k - i + 1}^k  + (N - k -1) \cdot \theta = -\theta,$$
while for $k \geq p \geq k - i + 2$ we have
$$s - \ell_p^k + (N - k -1) \cdot \theta  \geq s - \ell_{k - i + 2}^k + (N - k -1) \cdot \theta = s - \tilde{\ell}_{k - i + 2}^k + (N - k -1) \cdot \theta \geq $$
$$  s - \tilde{\ell}_{k - i + 1 }^k + (N - k) \cdot \theta = 1.$$
The latter implies that none of the factors in the numerator in (\ref{QR4}) vanish and so the expression in (\ref{QR4}) has a simple pole at $z = s$, coming from the factor $z - \ell_{k - i + 1}^k + (N-k) \cdot \theta$.

Suppose next that $\tilde{n} \geq n \geq k + 1$. Then we have for $1 \leq p \leq n - i $ that 
$$s - \ell_{p}^{n-1} + (N-n) \cdot \theta  = s - \tilde{\ell}_{p}^{n-1} + (N-n) \cdot \theta \leq s - \tilde{\ell}_{n - i}^{n-1} + (N-n) \cdot \theta \leq -\theta,$$
where we used that $\tilde{\ell}_{n - i}^{n-1} -  (N- n + 1) \cdot \theta \geq s$ by the definition of $n$. For $n -1 \geq p \geq  n - i + 1 $
$$s - \ell_{p}^{n-1} + (N-n) \cdot \theta =s - \tilde{\ell}_{p}^{n-1} + (N-n) \cdot \theta  \geq s - \tilde{\ell}_{n - i + 1}^{n-1} + (N-n) \cdot \theta \geq  s - \tilde{\ell}_{n -i + 1}^{n} + (N-n)\cdot \theta = 1.$$
The latter implies that none of the factors in the numerator in (\ref{QR3}) vanish and so the expression in (\ref{QR3}) a simple pole at $z = s$, coming from the factor $z - \ell_{n - i + 1 }^n + (N -n) \cdot \theta$.\\

Our work so far shows that $(\ell, n) \in  \sqcup_{j = k}^N \Tp^{i,j}$. What remains is to show that $\bb(\ell, n) =  (\tilde{\ell}, \tilde{n})$. The last statement is clear by the definitions of $(\ell, n)$ and $\bb$ once we use $\tilde{\ell}^{\tilde{n}+1}_{\tilde{n} - i+2} \leq \tilde{\ell}^{\tilde{n}}_{\tilde{n} - i + 1} = s - 1$. 

Our work in this step shows that $\bb$ is both injective and surjective, hence bijective.\\

{\bf \raggedleft Step 6.} In this final step we show that (\ref{FR1}), (\ref{FR2}), (\ref{FR3}) and (\ref{FR4}) all hold.

We have that the right side of (\ref{FR1}) is equal to 
\begin{equation*}
\begin{split}
& \left[ \frac{\phi_2^{N+1}(s) \P(\tilde\ell) }{ \phi_2^k(s)  \P({\ell})  }   \cdot \prod_{p = 1, p \neq N - i + 1}^N\frac{s- {\ell}^N_p +\theta-1}{s - {\ell}^N_p-1} \cdot \prod_{p = 1, p \neq k - i + 1}^{k}\frac{s - \ell^{k}_p +(N-k) \cdot \theta}{s- \ell^{k}_p+(N-k-1) \cdot \theta }  - 1 \right]   \\
&  \times \phi_2^k(s)  \P({\ell})   \cdot  \prod_{p = 1, p \neq k - i + 1}^{k} \hspace{-2mm}\frac{s- \ell^{k}_p+(N-k-1) \cdot \theta }{s - \ell^{k}_p +(N-k) \cdot \theta}  = \left[ \frac{\phi_2^{N+1}(s) }{ \phi_2^k(s)   } \cdot \prod_{j = k }^{N} \dfrac{w_{j}(s + (N-j) \cdot \theta-1)}{w_{j}(s + (N-j) \cdot \theta)}  - 1 \right]   \\
& \times \phi_2^k(s)  \P({\ell}) \prod_{p = 1, p \neq i}^{k}\frac{s- \ell^{k}_p+(N-k-1) \cdot \theta }{s - \ell^{k}_p +(N-k) \cdot \theta}  = 0,
\end{split}
\end{equation*}
where in the first equality we used (\ref{UR4}) and in the second we used (\ref{eq:phi}). This proves (\ref{FR1}).

We have that the right side of (\ref{FR2}) is equal to 
\begin{equation*}
\begin{split}
& \phi_2^{n}(s) \P({\ell}) \prod_{p = 1, p \neq n - i+ 1}^{n} \frac{s- \ell^{n}_p+(N-n+1) \cdot \theta -1}{s - \ell_p^{n} +(N-n) \cdot \theta}    \prod_{p = 1}^{n-1}  \frac{s-\ell_p^{n-1} +(N-n) \cdot \theta}{s -  \ell_p^{n-1}+(N-n+1) \cdot \theta-1}  \\
& \times \Bigg[ \frac{\phi_2^{N+1}(s) \P(\tilde{\ell})}{ \phi_2^{n}(s) \P({\ell}) } \hspace{-2mm}  \prod_{p = 1, p \neq n - i+ 1}^{n} \frac{s - \ell_p^{n} +(N-n) \cdot \theta}{s- \ell^{n}_p+(N-n+1) \cdot \theta -1}    \prod_{p = 1}^{n-1}  \frac{s -  \ell_p^{n-1}+(N-n+1) \cdot \theta-1}{s-\ell_p^{n-1} +(N-n) \cdot \theta}     \\
& \times \hspace{-2mm} \prod_{p = 1, p \neq N - i + 1}^N\frac{s- {\ell}^N_p +\theta-1}{s - {\ell}^N_p-1}- 1 \Bigg] =  \phi_2^{n}(s) \P({\ell}) \prod_{p = 1, p \neq n - i+ 1}^{n} \frac{s- \ell^{n}_p+(N-n+1) \cdot \theta -1}{s - \ell_p^{n} +(N-n) \cdot \theta}   \\
& \times \prod_{p = 1}^{n-1}  \frac{s-\ell_p^{n-1} +(N-n) \cdot \theta}{s -  \ell_p^{n-1}+(N-n+1) \cdot \theta-1} \cdot \left[\frac{\phi_2^{N+1}(s)}{ \phi_2^{n}(s)} \prod_{j = n }^{N} \dfrac{w_{j}(s + (N-j) \cdot \theta-1)}{w_{j}(s + (N-j)\cdot \theta)}  - 1\right] = 0.
\end{split}
\end{equation*}
where in the first equality we used (\ref{UR2}) and in the second we used (\ref{eq:phi}). This proves (\ref{FR2}).

We have that the right side of (\ref{FR3}) is equal to 
\begin{equation*}
\begin{split}
&\phi_2^{n}(s) \P(\ell)  \cdot  \prod_{p = 1, p \neq n - i+ 1}^{n} \frac{s- \ell^{n}_p+(N-n+1) \cdot\theta -1}{s - \ell_p^{n} +(N-n) \cdot \theta}    \prod_{p = 1}^{n-1}  \frac{s-\ell_p^{n-1} +(N-n) \cdot \theta}{s -  \ell_p^{n-1}+(N-n+1) \cdot \theta-1}  \\
&\times \Bigg[ \frac{\phi_2^{\tilde{n} + 1}(s) \P(\tilde\ell) }{\phi_2^{n}(s) \P(\ell)  } \cdot  \prod_{p = 1}^{\tilde{n}+1} \frac{s- \ell^{\tilde{n}+1}_p+(N-\tilde{n}) \cdot \theta -1}{s - \ell_p^{\tilde{n}+1} +(N-\tilde{n} - 1) \cdot \theta}    \prod_{p = 1, p \neq \tilde{n} -  i + 1}^{\tilde{n}}  \frac{s-\ell_p^{\tilde{n}} +(N-\tilde{n} - 1) \cdot \theta}{s -  \ell_p^{\tilde{n}}+(N-\tilde{n}) \cdot \theta-1}  \\
& \times  \prod_{p = 1, p \neq n - i+ 1}^{n} \frac{s - \ell_p^{n} +(N-n) \cdot \theta}{s- \ell^{n}_p+(N-n+1) \cdot \theta -1}    \prod_{p = 1}^{n-1}  \frac{s -  \ell_p^{n-1}+(N-n+1) \cdot \theta-1}{s-\ell_p^{n-1} +(N-n) \cdot \theta}  - 1 \Bigg]  \\
&= \phi_2^{n}(s) \P(\ell)  \cdot  \prod_{p = 1, p \neq n - i+ 1}^{n} \frac{s- \ell^{n}_p+(N-n+1) \cdot \theta -1}{s - \ell_p^{n} +(N-n) \cdot \theta}    \prod_{p = 1}^{n-1}  \frac{s-\ell_p^{n-1} +(N-n) \cdot \theta}{s -  \ell_p^{n-1}+(N-n+1) \cdot \theta-1}  \\
& \times \left[ \frac{\phi_2^{\tilde{n}+1}(s)}{ \phi_2^{n}(s)} \prod_{j = n }^{\tilde{n}} \dfrac{w_{j}(s + (N-j) \cdot \theta-1)}{w_{j}(s + (N-j) \cdot \theta)}  - 1\right] = 0,
\end{split}
\end{equation*}
where in the first equality we used (\ref{UR1}) and in the second we used (\ref{eq:phi}). This proves (\ref{FR3}).

We have that the right side of (\ref{FR4}) is equal to 
\begin{equation*}
\begin{split}
&   \Bigg[ \frac{\phi_2^{\tilde{n} + 1}(s) \P(\tilde\ell)}{\phi_2^k(s)  \P({\ell}) }  \cdot  \prod_{p = 1}^{\tilde{n}+1} \frac{s- \ell^{\tilde{n}+1}_p+(N-\tilde{n})\cdot \theta -1}{s - \ell_p^{\tilde{n}+1} +(N-\tilde{n} - 1) \cdot \theta}    \prod_{p = 1, p \neq \tilde{n} - i + 1}^{\tilde{n}}  \frac{s-\ell_p^{\tilde{n}} +(N-\tilde{n} - 1) \cdot \theta}{s -  \ell_p^{\tilde{n}}+(N-\tilde{n}) \cdot \theta-1}  \\
& \times \prod_{p = 1, p \neq k -  i + 1}^{k}\frac{s - \ell^{k}_p +(N-k) \cdot \theta}{s- \ell^{k}_p+(N-k-1) \cdot \theta } - 1 \Bigg] =  \phi_2^k(s)  \P({\ell})  \cdot \prod_{p = 1, p \neq k -  i + 1}^{k}\frac{s- \ell^{k}_p+(N-k-1) \cdot \theta }{s - \ell^{k}_p +(N-k) \cdot \theta} \\
& = \left[ \frac{\phi_2^{\tilde{n}+1}(s)}{ \phi_2^{k}(s)} \prod_{j = k }^{\tilde{n}} \dfrac{w_{j}(s + (N-j) \cdot \theta-1)}{w_{j}(s + (N-j) \cdot \theta)}  - 1\right] \cdot \prod_{p = 1, p \neq k -  i + 1}^{k}\frac{s- \ell^{k}_p+(N-k-1) \cdot \theta }{s - \ell^{k}_p +(N-k) \cdot \theta}  = 0,
\end{split}
\end{equation*}
where in the first equality we used (\ref{UR3}) and in the second we used (\ref{eq:phi}). This proves (\ref{FR4}).
\end{proof}

%-------------------------------------------------------------------------------------------------------------------------------------------------------------------------------------------------
%    Section 2.3
%
%-------------------------------------------------------------------------------------------------------------------------------------------------------------------------------------------------
\subsection{Multi-level Nekrasov equations: $\theta = 1$}\label{Section2.3} In this section we present the multi-level Nekrasov equations for the case $\theta = 1$. 

\begin{theorem}\label{MainNek2} Let $\mathbb{P}^{1,M}_{N,k}$ be a complex measure as in (\ref{S2PDef}) for $\theta =1$, $ N \in \mathbb{N}$, $k \in \llbracket 1, N \rrbracket$, $M \in \mathbb{Z}_{\geq 0}$. Let $\mathcal{M} \subseteq \mathbb{C}$ be an open set and $[- N, M] \subseteq \mathcal{M}$. Suppose that there exist functions $\phi_r^{ j}$ for $j=k,\dots, N + 1$, $r=1,2$ that are analytic in $\mathcal{M}$ and such that for any $j \in \llbracket k, N \rrbracket$
\begin{equation}\label{eq:phi_s}
\begin{split}
\frac{\phi^{j+1}_1(z)}{ \phi^{j}_1(z)} &=  \frac{w_j(z-1)}{w_j(z)} \mbox{ for $z \in [1 -j, M - 1]$};\\
 \frac{\phi_2^{j}(z)}{ \phi_2^{j+1}(z)} &=\dfrac{w_{j}(z + N - j - 1)}{w_{j}(z + N - j)} \mbox{ for $z \in [1 - N, M - N +j - 1]$}.
\end{split}
\end{equation}
Then the following functions $R_1(z)$, $R_2(z)$ are analytic in $\mathcal{M}$:
\begin{equation}\label{eq:mN1_simple}
\begin{split}
R_1 (z):= \hspace{2mm}&\phi_1^{N+1}(z) \cdot \mathbb{E} \left[ \prod_{p = 1}^N\frac{z- \ell^N_p -1}{z - \ell^N_p} \right] + \phi_1^k(z) \cdot \mathbb{E} \left[ \prod_{p = 1}^{k}\frac{z- \ell^{k}_p }{z - \ell^{k}_p - 1} \right]   \\
& + \sum\limits_{j=k+1}^{N}\phi^{j}_1(z) \cdot \mathbb{E} \left[  \sum_{p = 1}^{j} \frac{1}{z - \ell_p^{j} - 1} - \sum_{p = 1}^{j-1} \frac{1}{z -  \ell_p^{j-1}} \right]-\R_1(z);
\end{split}
\end{equation}

\begin{equation}\label{eq:mN2_simple}
\begin{split}
&R_2 (z):= \phi_2^{N+1}(z)  \cdot \mathbb{E}\hspace{-1mm} \left[ \prod_{p = 1}^N\frac{z- \ell^N_p}{z - \ell^N_p-1} \right] \hspace{-1mm} + \phi_2^k(z) \cdot \mathbb{E}\hspace{-1mm} \left[ \prod_{p = 1}^{k}\frac{z- \ell^{k}_p+(N-k-1) }{z - \ell^{k}_p +(N-k)} \right] \\
&  + \sum\limits_{j=k+1}^{N} \phi^{ j}_2(z) \cdot \mathbb{E}\left[ - \sum_{p = 1}^{j} \frac{1}{z - \ell_p^{j} +(N-j)}    +\sum_{p = 1}^{j-1}  \frac{1}{z -  \ell_p^{j-1}+(N-j)} \right]-\R_2(z),
\end{split}
\end{equation}
where the functions $\R_1(z)$, $\R_2(z)$ are given by
\begin{equation}\label{NERem1_s}
\begin{split}
&\R_1(z)= \dfrac{(-1) \phi^{N+1}_1(-N)}{z + N }  \cdot \mathbb{E}\left[\prod\limits_{p=1}^{N-1}\dfrac{\ell^N_p+N+1}{\ell^N_p +N } \cdot {\bf 1}\{  \ell^N_N= -N  \}\right]\\
&+ \dfrac{\phi_1^k(M)  }{z-M} \cdot \mathbb{E}\left[\prod\limits_{p=2}^{k}\dfrac{M-\ell^k_p}{M-\ell^k_p-1}\cdot {\bf 1}\{ \ell^k_1=M-1 \} \right] + \sum\limits_{j=k+1}^{N} \hspace{-1mm}\frac{  \phi^{j}_1(M)  }{z - M} \cdot \mathbb{E}\left[ {\bf 1 } \{ \ell^j_1=M-1 \} \right],
\end{split}
\end{equation}
\begin{equation}\label{NERem2_s}
\begin{split}
& \R_2(z)=  \frac{ \phi_2^{N+1}(M)}{z - M} \cdot  \mathbb{E}\left[\prod\limits_{p=2}^{N}\dfrac{M-\ell^N_p}{M-\ell^N_p-1}\cdot {\bf 1 }\{ \ell^N_1=M-1 \}\right]  \\
& + \frac{(-1)\phi_2^k(-N  ) }{z + N} \cdot \mathbb{E}\left[\prod\limits_{p=1}^{k-1}\dfrac{\ell^k_p + k-1}{\ell^k_p + k} \cdot 
      {\bf 1 }\{\ell^k_{k}= -k\} \right] \hspace{-1mm}+ \hspace{-1mm} \sum \limits_{j=k+1}^{N} \frac{(-1)\phi^{j}_2(-N)}{z + N} \cdot \mathbb{E}\left[ {\bf 1}\{ \ell^{j}_{j}= - j \} \right].
    \end{split}
\end{equation}
\end{theorem}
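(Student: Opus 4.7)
The plan is to derive Theorem \ref{MainNek2} from Theorem \ref{MainNek1} by a careful $\theta \to 1$ limit. Setting $\epsilon := \theta - 1$, the key algebraic observation is
$$\prod_{p=1}^{j} \frac{z - \ell_p^j - \theta}{z - \ell_p^j - 1} \prod_{p=1}^{j-1} \frac{z - \ell_p^{j-1} + \theta - 1}{z - \ell_p^{j-1}} = 1 - \epsilon H_j(z) + O(\epsilon^2),$$
with $H_j(z) := \sum_{p=1}^{j} \frac{1}{z - \ell_p^j - 1} - \sum_{p=1}^{j-1} \frac{1}{z - \ell_p^{j-1}}$ being precisely the new observable appearing in the middle sum of (\ref{eq:mN1_simple}). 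Combined with $\frac{\theta}{1-\theta} = -\epsilon^{-1} - 1$ and the notation $S(z) := \sum_{j=k+1}^N \phi_1^j(z)$, this yields for the middle sum of (\ref{eq:mN1_gen}) the expansion
$$\frac{\theta}{1-\theta}\sum_{j=k+1}^N \phi_1^j(z)\, \mathbb{E}^{(\theta)}[\text{product}] = -\frac{S(z)}{\epsilon} - S(z) + \sum_{j=k+1}^N \phi_1^j(z)\, \mathbb{E}^{(\theta)}[H_j(z)] + O(\epsilon),$$
where $\mathbb{E}^{(\theta)}$ denotes expectation with respect to $\P$.

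First, I would argue that the $\phi_1^j, \phi_2^j$ satisfying (\ref{eq:phi_s}) at $\theta = 1$ also satisfy hypothesis (\ref{eq:phi}) of Theorem \ref{MainNek1} for every $\theta$ in a punctured real neighborhood of $1$. Under the mild additional assumption (standing in the concrete applications of interest in this paper) that $w_j$ extends meromorphically to a complex neighborhood of its domain, both sides of the equalities in (\ref{eq:phi}) are meromorphic on $\mathcal{M}$ and agree on an open real subinterval, hence agree throughout by analytic continuation. Applying Theorem \ref{MainNek1} then yields, for each such $\theta \neq 1$, analytic functions $R_1^{(\theta)}(z)$, $R_2^{(\theta)}(z)$ on $\mathcal{M}$ defined by (\ref{eq:mN1_gen}) and (\ref{eq:mN2_gen}) relative to $\P$.

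Next, I would take the $\theta \to 1$ limit of $R_1^{(\theta)}(z)$ termwise. The top and bottom expectations converge by continuity to their counterparts in (\ref{eq:mN1_simple}), since the density (\ref{S2PDef}) is continuous in $\theta$ on the fixed finite state space $\XX$. The remainder $\R_1(z)$ in (\ref{NERem1}) converges term by term to (\ref{NERem1_s}), using $s_M = M + 1 - \theta \to M$ and that the internal products in the third summand collapse to $1$ at $\theta = 1$, $s_M = M$. Combining these observations with the expansion above yields
$$R_1^{(\theta)}(z) + \frac{S(z)}{\theta - 1} + S(z) \xrightarrow[\theta \to 1]{} R_1(z),$$
locally uniformly on $\mathcal{M}$: the left-hand side is a finite sum of explicit rational functions of $(z, \theta)$ weighted by $\P$, jointly continuous on $\mathcal{M} \times U$ for some real neighborhood $U$ of $1$. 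Since for each $\theta \neq 1$ close to $1$ the left-hand side is analytic in $z$ on $\mathcal{M}$ (it is the sum of the analytic function $R_1^{(\theta)}$ and the analytic function $\frac{S(z)}{\theta-1} + S(z)$), Weierstrass' theorem on uniform limits of holomorphic functions gives analyticity of $R_1(z)$ on $\mathcal{M}$.

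The analyticity of $R_2$ follows by the same argument applied to (\ref{eq:mN2_gen}) and (\ref{NERem2}), with the analogous first-order expansion of the product observable appearing there and the second equality of (\ref{eq:phi_s}). The main potential obstacle to this approach is the analytic-continuation step in the first stage: without some regularity of $w_j$ in the complex direction, one cannot automatically promote (\ref{eq:phi_s}) to (\ref{eq:phi}) on the shifted intervals $[1 - j\theta, M - \theta]$, whose endpoints escape $[1-j, M-1]$ for $\theta \neq 1$. If such regularity is unavailable, an alternative is to prove Theorem \ref{MainNek2} directly by mirroring the six-step scheme of Theorem \ref{MainNek1}: identify simple poles, compute residues of $R_1$ and $R_2$ as sums over shifted configurations, and construct analogues of the bijections $\mathfrak{b}$ and $\mathfrak{b}_2$ matching positive and negative contributions. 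At $\theta = 1$ many of the ratio formulas (\ref{TR1})--(\ref{TR4}) and (\ref{UR1})--(\ref{UR4}) simplify substantially, making the corresponding residue cancellations a direct consequence of (\ref{eq:phi_s}).
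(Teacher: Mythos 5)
Your route is the paper's route: Theorem \ref{MainNek2} is indeed deduced from Theorem \ref{MainNek1} by subtracting the divergent term $\frac{\theta}{1-\theta}\sum_{j=k+1}^{N}\phi_1^j(z)$ (your $-S(z)/(\theta-1)-S(z)$) and letting $\theta\to1$, and your first-order expansion producing the observables $\sum_{p=1}^{j}(z-\ell_p^j-1)^{-1}-\sum_{p=1}^{j-1}(z-\ell_p^{j-1})^{-1}$, as well as the collapse of the third summand of (\ref{NERem1}) to that of (\ref{NERem1_s}), are correct. The genuine gap is in the convergence step. Your claim of locally uniform convergence on all of $\mathcal{M}$, justified by saying the left-hand side is ``a finite sum of explicit rational functions of $(z,\theta)$ weighted by $\P$, jointly continuous on $\mathcal{M}\times U$,'' is not valid: each summand has poles at the (moving) real points $\ell_p^j\in[-N\theta,M-\theta]$, so the summands are not continuous on $\mathcal{M}\times U$, and continuity of the full sum up to $\theta=1$ at points of $[-N,M]$ is precisely the absence of poles of $R_1$ there, i.e.\ the conclusion being proved -- the argument is circular near the segment. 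What one gets for free is uniform convergence of $R_1^{(\theta)}-G_1^{\theta}$ on compact subsets of $U\setminus[-N,M]$; the paper then upgrades this by representing $R_1^{(\theta)}-G_1^{\theta}$ via Cauchy's formula over a fixed contour $\gamma$ enclosing a neighborhood $V_\epsilon$ of $[-N,M]$, passing to the limit on $\gamma$, and noting that the resulting analytic function on $V_{\epsilon/2}$ agrees with the a priori meromorphic $R_1$ off the segment, whence $R_1$ is analytic. Your appeal to Weierstrass must be replaced or supplemented by this (or an equivalent Morera-type) step.

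A second gap concerns $R_2$: ``the same argument with the second equality of (\ref{eq:phi_s})'' does not verify the second condition in (\ref{eq:phi}) for $\theta\neq1$, because there the weight is evaluated at $z+(N-j)\theta-1$ and $z+(N-j)\theta$, a shift that moves with $\theta$; analytic continuation of the second line of (\ref{eq:phi_s}) only yields the identity with the fixed shift $N-j$, which is a different statement unless $w_j(x-1)/w_j(x)$ happens to be invariant under translation by $(1-\theta)(N-j)$. The paper resolves this by running the $\theta\to1$ limit for $R_2$ with re-centered weights $w_j^{\theta}$ defined by $w_j^{\theta}(z+\theta(N-j)):=w_j(z+N-j)$, which satisfy the second line of (\ref{eq:phi}) with the original $\phi_2^j$ and reduce to $w_j$ at $\theta=1$, so the deformed measures still converge to $\mathbb{P}^{1,M}_{N,k}$. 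Finally, on the obstacle you flag about promoting (\ref{eq:phi_s}) to (\ref{eq:phi}): the paper does not add your meromorphy hypothesis but simply asserts that (\ref{eq:phi_s}) already yields analytic continuations of the $w_j$ to a complex neighborhood of $[-N,M]$ and continues as you do; your caution is fair, and your fallback of redoing the six-step residue argument directly at $\theta=1$ would also work, but relative to the paper's proof the two substantive issues are the ones above.
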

\begin{remark}\label{RemBoundary2}
If we have that $\phi^{N+1}_1(-N)=0$ and $ \phi^{j}_1(M) = 0$ for $j = 1, \dots, N$ then $\R_1(z) = 0$. Analogously, if $\phi_2^{N+1}(M ) = 0$ and $\phi^j_2(-N ) = 0$ for $j = 1, \dots, N$ then $\R_2(z) = 0$.
\end{remark}
\begin{proof} We will deduce the theorem from Theorem \ref{MainNek1} by performing an appropriate $\theta \rightarrow 1$ limit transition. We begin with the analyticity of $R_1$.

From the first line in (\ref{eq:phi_s}) we know that $w_j$ for $j \in \llbracket k, N \rrbracket$ have (unique) analytic continuations to a complex neighborhood $U \subseteq \mathcal{M}$ of $[-N, M]$, which we continue to call $w_j$. Let $\theta$ be sufficiently close to $1$ so that $[-N \cdot \theta, M+ 1 - \theta] \subseteq U$ and write $\P$ for the measure as in (\ref{S2PDef}) with the weights $w_j$ we just introduced. It follows from (\ref{eq:mN1_gen}) that 
\vspace{-1mm}
\begin{equation}\label{S23E1}
\begin{split}
R^{\theta}_1 (z):= \hspace{2mm}&\phi_1^{N+1}(z) \cdot \mathbb{E}^{\theta} \left[ \prod_{p = 1}^N\frac{z- \ell^N_p -\theta}{z - \ell^N_p} \right] + \phi_1^k(z) \cdot \mathbb{E}^{\theta}  \left[ \prod_{p = 1}^{k}\frac{z- \ell^{k}_p + \theta - 1}{z - \ell^{k}_p - 1} \right]   \\
&+ \frac{\theta}{1-\theta}  \sum\limits_{j=k+1}^{N}\phi^{j}_1(z) \cdot \mathbb{E}^{\theta} \left[  \prod_{p = 1}^{j} \frac{z- \ell^{j}_p  -\theta}{z - \ell_p^{j} - 1}  \prod_{p = 1}^{j-1} \frac{z-  \ell_p^{j-1} + \theta - 1}{z -  \ell_p^{j-1}} \right]-\R^{\theta}_1(z),
\end{split}
\end{equation}
is analytic in $U$. In (\ref{S23E1}) we have inserted $\theta$ into the notation to indicate the dependence of the expressions on it. In particular, the expectations above are with respect to $\P$, rather than $\mathbb{P}^{1,M}_{N,k}$. Subtracting $G^{\theta}_1(z) := \frac{\theta}{1-\theta}\sum_{j=k+1}^{N}\phi^{j}_1(z)  $
from both sides of (\ref{S23E1}) and letting $\theta \rightarrow 1$, we see that the right side converges uniformly over compact subsets of $U \setminus [-N, M]$ to the right side of (\ref{eq:mN1_simple}). In particular, we see that for $z \in U \setminus [-N, M]$ we have 
\begin{equation}\label{S23E2}
\lim_{\theta \rightarrow 1} [R_1^{\theta}(z) - G^{\theta}_1(z)]  = R_1(z).
\end{equation}

Let $\epsilon > 0$ be sufficiently small so that $V_{\epsilon} = \{z \in \mathbb{C}: d(z,[-N, M]) < \epsilon \} \subseteq U$ and let $\gamma$ denote a positively oriented contour that encloses $V_{\epsilon}$, and is contained in $U$. From Cauchy's theorem and (\ref{S23E2}) we have for $z \in V_{\epsilon/2} $
$$\lim_{\theta \rightarrow 1} [R_1^{\theta}(z) - G^{\theta}_1(z)] = \lim_{\theta \rightarrow 1} \frac{1}{2\pi \i}\int_{\gamma} \frac{R_1^{\theta}(\zeta) - G^{\theta}_1(\zeta)}{\zeta -z} = \frac{1}{2\pi \i}\int_{\gamma} \frac{R_1(\zeta) }{\zeta -z},$$
and the latter convergence is uniform on $V_{\epsilon/2} $. From Theorem \ref{MainNek1} for $\theta \neq 1$ we know that $R_1^{\theta}$ are analytic and $G^{\theta}_1$ are analytic by our assumption on $\phi_1^j$. This means that 
$$\frac{1}{2\pi \i}\int_{\gamma} \frac{R_1(\zeta) }{\zeta -z}$$
defines an analytic function on $V_{\epsilon/2}$ as the uniform limit of analytic functions, see \cite[Chapter 2, Theorem 5.2]{SS}. In addition, by (\ref{S23E2}) we know that this function agrees with $ R_1(z)$ on $V_{\epsilon/2} \setminus [-N, M].$ The latter shows that $R_1(z)$, which from (\ref{eq:mN1_simple}) is clearly analytic in $\mathcal{M} \setminus [-M, N]$,  has an analytic continuation to $\mathcal{M}$. Since from (\ref{eq:mN1_simple}) we know that $R_1(z)$ is meromorhpic on $\mathcal{M}$, we conclude that it is in fact analytic there as desired.

\vspace{2mm}

For the analyticity of $R_2$ we argue as follows. From the second line in (\ref{eq:phi_s}) we know that $w_j$ for $j \in \llbracket k, N \rrbracket$ have (unique) analytic continuations to a complex neighborhood $U \subseteq \mathcal{M}$ of $[-N, M]$, which we continue to call $w_j$. Let us define $w_j^{\theta}$ through 
$$w_j^{\theta}(z + \theta (N-j)) = w_j(z+ N - j),$$ 
and write $\P$ for the measure as in (\ref{S2PDef}) with the weights $w^{\theta}_j$ we just introduced. We see that $w_j^{\theta}$ satisfy the second set of equalities in (\ref{eq:phi}) in $U$ provided $\theta$ is close enough to $1$ and so from (\ref{eq:mN2_gen}) 
\vspace{-1mm}
\begin{equation}\label{S23E3}
\begin{split}
&R^{\theta}_2 (z):= \phi_2^{N+1}(z)  \mathbb{E}^{\theta}\hspace{-1mm} \left[ \prod_{p = 1}^N\frac{z- \ell^N_p +\theta-1}{z - \ell^N_p-1} \right] \hspace{-1mm} + \phi_2^k(z)  \mathbb{E}^{\theta}\hspace{-1mm} \left[ \prod_{p = 1}^{k}\frac{z- \ell^{k}_p+(N-k-1)\theta }{z - \ell^{k}_p +(N-k)\theta} \right] \hspace{-1mm} + \hspace{-1mm} \frac{\theta}{1-\theta} \\
& \times \hspace{-2mm} \sum\limits_{j=k+1}^{N}\hspace{-2mm} \phi^{ j}_2(z)\mathbb{E}^{\theta}\left[  \prod_{p = 1}^{j} \frac{z- \ell^{j}_p+(N-j+1)\theta -1}{z - \ell_p^{j} +(N-j)\theta}    \prod_{p = 1}^{j-1}  \frac{z-\ell_p^{j-1} +(N-j)\theta}{z -  \ell_p^{j-1}+(N-j+1) \theta-1} \right]-\R^{\theta}_2(z),
\end{split}
\end{equation}
is analytic in $U$. As before we have reflected the dependence on $\theta$ of the expressions above in the notation. We now subtract $G^{\theta}_2(z) := \frac{\theta}{1-\theta}\sum_{j=k+1}^{N}\phi^{j}_2(z)  $ from both sides of (\ref{S23E3}), let $\theta \rightarrow 1$ and see that the right side converges uniformly over compact subsets of $U \setminus [-N, M]$ to (\ref{eq:mN2_simple}). From here we can repeat the argument for analyticity of $R_1$ verbatim to get the analyticity of $R_2$. 
 \end{proof}

%-------------------------------------------------------------------------------------------------------------------------------------------------------------------------------------------------
% Section 3
%
%-------------------------------------------------------------------------------------------------------------------------------------------------------------------------------------------------
\section{Application of Nekrasov equations}\label{Section3} In this section we continue with the notation from Sections \ref{Section1} and \ref{Section2}. In Section \ref{Section3.1} we summarize various basic properties of joint cumulants. In Section \ref{Section3.2} we consider measures on Gelfand-Tsetlin patterns as in (\ref{S1PDef}) and explain how to use the multi-level Nekrasov equations from Sections \ref{Section2.2} and \ref{Section2.3} to obtain equations relating joint observables on several levels, see Lemma \ref{CrudeCumExp}. 

%-------------------------------------------------------------------------------------------------------------------------------------------------------------------------------------------------
% Section 3.1
%
%-------------------------------------------------------------------------------------------------------------------------------------------------------------------------------------------------
\subsection{Joint cumulants}\label{Section3.1} In this section we summarize some notation and results about joint cumulants of random variables. For more background we refer the reader to \cite[Chapter 3]{Taqqu}.

For $n$ bounded complex-valued random variables $X_1, \dots, X_n$ we let $M(X_1, \dots, X_n)$ denote their joint cumulant. Explicitly, we have 
\begin{equation}\label{CumDef}
M(X_1, \dots, X_n) = \frac{\partial^n}{\partial z_1 \cdots \partial z_k} \log \mathbb{E} \left[ \exp \left( \sum_{i = 1}^n z_i \cdot X_i \right) \right] \Bigg{\vert}_{z_1 = \cdots = z_k = 0}.
\end{equation}
If $n = 1$, then $M(X_1) = \mathbb{E}[X_1]$. For every subset $J = \{j_1, \dots, j_k \} \subseteq \llbracket 1, n \rrbracket$ we write 
$$\bX_J = \{ X_{j_1}, \dots, X_{j_k} \} \mbox{ and } \bX^J = X_{j_1} \times\cdots \times X_{j_k},$$
where $\times$ denotes the usual product. If $Y$ is a bounded random variable we also write 
$$M(\bX_J) = M (X_{j_1}, \dots, X_{j_k}) \mbox{, } \mathbb{E}\left[ \bX^J \right] = \mbox{$\mathbb{E} \left[ \prod_{j \in J} X_j \right]$}, \mbox{ and } M(Y;\bX_J) = M(Y, X_{j_1}, \dots, X_{j_k}) $$
where we remark that the definitions make sense as the joint cumulant of a set of random variables $\bX_J$ is invariant with respect to permutations of $J$.

We record the following basic properties of $M(X_1, \dots, X_n)$, see \cite[Section 3.1]{Taqqu}. If $Y$ is a bounded complex-valued random variable and $c_1, \dots, c_n \in \mathbb{C}$ we have 
\begin{equation}\label{S3Linearity}
\begin{split}
&M(X_1 + Y,  X_2, \dots,  X_n )  = M(X_1, \dots, X_n)  +   M(Y, X_2, \dots, X_n) , \\
&M(c_1 X_1, \dots, c_n X_n) = \prod_{i = 1}^n c_i \cdot M(X_1, \dots, X_n) \\
&M( X_1 + c_1, \dots,  X_n + c_n) = M(X_1, \dots, X_n) \mbox{ if $n \geq 2$, while }M(X_1 + c_1) = M(X_1) + c_1.\\
\end{split}
\end{equation}

From \cite[Proposition 3.2.1]{Taqqu} we have the following identities that relate joint moments and joint cumulants of the bounded complex-valued random variables $X_1, \dots, X_n$. 
\begin{lemma}\label{CumToMom} For any $J \subseteq \llbracket 1, n \rrbracket$ we have
\begin{equation}\label{Mal1}
\begin{split}
&\mathbb{E} \left[ \bX^J \right] = \sum_{ \pi = \{ b_1, \dots, b_k \} \in \mathcal{P}(J)} M(\bX_{b_1}) \cdots M(\bX_{b_k})\\
\end{split}
\end{equation}
\begin{equation}\label{Mal2}
\begin{split}
&M(\bX_J) = \sum_{\sigma = \{a_1, \dots, a_r\} \in \mathcal{P}(J)} (-1)^{r-1} (r-1)! \cdot \mathbb{E}\left[ \bX^{a_1} \right] \cdots \mathbb{E} \left[ \bX^{a_r} \right],
\end{split}
\end{equation}
where $\mathcal{P}(J)$ is the set of partitions of $b$, see \cite[Section 2.2]{Taqqu}. If $X,Y$ are bounded complex-valued random variables we also have
\begin{equation}\label{Mal3}
\begin{split}
&M(XY,X_1, \dots, X_n) = M(X,Y,X_1, \dots, X_n) + \sum_{I \subseteq \llbracket 1, n \rrbracket} M(X;\bX_I ) \cdot M(Y;\bX_{I^c} ),
\end{split}
\end{equation}
where $I^c = \llbracket 1, n \rrbracket \setminus I$.
\end{lemma}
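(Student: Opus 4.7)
The plan is to derive all three identities from formal power series manipulations of the moment generating function $F(z) := \mathbb{E}\left[\exp\left(\sum_{i=1}^n z_i X_i\right)\right]$, which is entire since the $X_i$ are bounded. By the definition (\ref{CumDef}), $M(\bX_{\llbracket 1, n \rrbracket})$ equals the coefficient of $z_1 \cdots z_n$ in $\log F(z)$ around $z = 0$, while $\mathbb{E}[\bX^{\llbracket 1, n \rrbracket}]$ is the corresponding coefficient in $F(z)$ itself. The statements for an arbitrary $J \subseteq \llbracket 1, n \rrbracket$ then follow by specializing $z_i = 0$ for $i \notin J$.

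For (\ref{Mal1}), I would apply the exponential formula to $F = \exp(\log F) = \sum_{k \geq 0}(\log F)^k/k!$ and extract the coefficient of $z_1 \cdots z_n$: the contribution from $(\log F)^k/k!$ is indexed by ordered $k$-tuples of disjoint non-empty subsets partitioning $\llbracket 1, n \rrbracket$, and the factor $1/k!$ collapses this to the sum over unordered set partitions. For (\ref{Mal2}), I would invert using the series $\log F = \sum_{r \geq 1}(-1)^{r-1}(F-1)^r/r$; each unordered partition with $r$ blocks contributes with multiplicity $r!$ from the orderings of the blocks, and after dividing by $r$ this produces the combinatorial factor $(-1)^{r-1}(r-1)!$ of (\ref{Mal2}).

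For Malyshev's formula (\ref{Mal3}), my plan is to introduce auxiliary formal variables $u, v, s$ and consider both $G(u, v, z) := \mathbb{E}[\exp(uX + vY + \sum_i z_i X_i)]$ and $H(s, z) := \mathbb{E}[\exp(s \cdot XY + \sum_i z_i X_i)]$. The key observation is that $\partial_s H(s, z)|_{s=0} = \mathbb{E}[XY \cdot e^{\sum z_i X_i}] = \partial_u \partial_v G(u, v, z)|_{u=v=0}$, so that at $u = v = s = 0$ the identity $\partial_u \partial_v \log G = \partial_u \partial_v G/G - (\partial_u G)(\partial_v G)/G^2$ yields $\partial_s \log H|_{s = 0} = \partial_u \partial_v \log G|_{u = v = 0} + (\partial_u \log G \cdot \partial_v \log G)|_{u = v = 0}$ as formal power series in $z$. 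Applying $\partial_{z_1} \cdots \partial_{z_n}|_{z = 0}$ to this identity, the left-hand side becomes $M(XY, X_1, \dots, X_n)$, the first term on the right becomes $M(X, Y, X_1, \dots, X_n)$, and by the Leibniz rule for iterated derivatives of a product the final term becomes $\sum_I M(X; \bX_I) \cdot M(Y; \bX_{I^c})$, which is precisely (\ref{Mal3}).

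The main obstacle is verifying the key differential identity above and confirming that all formal manipulations are legitimate, which follows from the boundedness assumption on $X_i$, $X$, $Y$ that makes $F$, $G$, $H$ entire and the quotients $1/F$ etc.\ analytic in a neighborhood of $z = 0$. Everything else is routine differentiation combined with the combinatorics of set partitions, matching the treatment in \cite[Section 3.1]{Taqqu}.
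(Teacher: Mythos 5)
Your proposal is correct. Note that the paper itself gives no proof of this lemma: equations (\ref{Mal1}), (\ref{Mal2}) and Malyshev's formula (\ref{Mal3}) are simply quoted from \cite[Proposition 3.2.1]{Taqqu}, so there is no in-text argument to compare against; what you have written is essentially the standard generating-function proof that such references supply. Your treatment of (\ref{Mal1}) and (\ref{Mal2}) is sound: since $\log F$ and $F-1$ both vanish at $z=0$, the coefficient of the squarefree monomial $z_1\cdots z_n$ in $(\log F)^k$ (resp.\ $(F-1)^r$) is indexed exactly by ordered tuples of disjoint nonempty blocks covering $\llbracket 1,n\rrbracket$, and the $1/k!$ (resp.\ the division by $r$ after the $r!$ overcount) produces the sum over unordered partitions with the factor $(-1)^{r-1}(r-1)!$; the case of general $J$ follows by setting the unused $z_i$ to zero. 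For (\ref{Mal3}), the key identity
$\partial_s \log H\big|_{s=0} = \partial_u\partial_v \log G\big|_{u=v=0} + \bigl(\partial_u \log G\cdot \partial_v \log G\bigr)\big|_{u=v=0}$
is verified directly: both sides reduce to
$\mathbb{E}[XY e^{\sum_i z_iX_i}]/F(z)$ versus $\mathbb{E}[XY e^{\sum_i z_iX_i}]/F(z) - \mathbb{E}[X e^{\sum_i z_iX_i}]\,\mathbb{E}[Y e^{\sum_i z_iX_i}]/F(z)^2 + \mathbb{E}[X e^{\sum_i z_iX_i}]\,\mathbb{E}[Y e^{\sum_i z_iX_i}]/F(z)^2$,
and applying $\partial_{z_1}\cdots\partial_{z_n}|_{z=0}$ together with the Leibniz rule for the product term gives exactly $\sum_{I} M(X;\bX_I)M(Y;\bX_{I^c})$; the identification of $\partial_{z_I}\partial_u\log G|_{u=v=0,z=0}$ with $M(X;\bX_I)$ is legitimate because setting the unused variables ($v$ and $z_i$, $i\notin I$) to zero commutes with differentiating in the used ones. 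The boundedness of the random variables makes $F$, $G$, $H$ entire with value $1$ at the origin, so the logarithms are analytic near $0$ and all manipulations are justified. The only gain of your route over the paper's citation is self-containedness; the combinatorial content is the same as in the cited reference.
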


%-------------------------------------------------------------------------------------------------------------------------------------------------------------------------------------------------
% Section 3.2
%
%-------------------------------------------------------------------------------------------------------------------------------------------------------------------------------------------------
\subsection{Deformed measures}\label{Section3.2}
Let $\P$ be as in (\ref{S1PDef}), and assume that $(\ell^N, \dots, \ell^k) \in \XX$ is distributed according to $\P$. For $n \in \llbracket k, N \rrbracket$ and $L \in (0,\infty)$ we define
\begin{equation}\label{S3DefG}
G^n_L(z) = \sum_{i = 1}^{n} \frac{1}{z - \ell_i^n/L},
\end{equation}
to be the Stieltjes transform of the random measure $\mu^n_L = \sum_{i = 1}^n \delta( \ell_i^n/L)$. The first goal of this section is to find expressions for the joint cumulants of $G_L^{k}, \dots, G_L^{N}$. We do this in Lemma \ref{LemCum1}. Afterwards we use our multi-level Nekrasov equations, Theorems \ref{MainNek1} and \ref{MainNek2}, to derive integral equations that relate the joint cumulants of $G_L^{k}, \dots, G_L^{N}$ in Lemma \ref{CrudeCumExp}.

Let us fix $m_j \in \mathbb{Z}_{\geq 0}$ for $j \in \llbracket  k, N \rrbracket$. For each $j \in \llbracket  k, N \rrbracket$ we fix parameters $\tb^j = (t^j_1, \dots, t^j_{m_{j}})$ and $\vm^j = (v^j_1, \dots, v^j_{m_j})$ and denote the whole sets of parameters by $\tb = (\tb^{k}, \dots, \tb^{N})$ and $\vm = (\vm^{k}, \dots, \vm^{N})$. We assume that the parameters $\tb ,\vm$ are such that $v^j_i + t^j_i - y \neq 0$ and $v_i^j - y \neq 0$ for all $j \in \llbracket k, N \rrbracket$, $i = 1, \dots, m_j$, and all $y \in [- N \theta/ L, (M  - \theta)/ L]$.

With the above data we define the deformed distribution $\mathbb{P}^{\tb, \vm}$ on $\XX$ through
\begin{equation} \label{eq:distrgen_deformed}
\begin{split}
\mathbb{P}^{\tb, \vm}(\ell)=Z(\tb, \vm)^{-1} \P(\ell) \cdot  \prod_{j = k}^N \prod_{r = 1}^{j} \prod_{i = 1}^{m_j} \left( 1 + \frac{t_i^j}{v_i^j - \ell_r^j/L} \right).
\end{split}
\end{equation}
 If $\sum_{j = k}^N m_j = 0$ we have that $\mathbb{P}^{\tb, \vm} = \P$ is the undeformed measure. In general, $\mathbb{P}^{\tb, \vm}$ may be a complex-valued measure but we always choose the normalization constant $Z(\tb, \vm)$ so that $\sum_{\ell \in \XX} \mathbb{P}^{\tb, \vm}(\ell) = 1$. In addition, we require that the numbers $t^j_i$ are sufficiently close to zero so that $Z(\tb, \vm) \neq 0$. 

Recall from Section \ref{Section3.1} that if $X_1, \dots, X_n$ are bounded complex-valued random variables, we denote their joint cumulant by $M(X_1, \dots, X_n)$, and if $A$ is a set of bounded random variables $A = \{X_1, \dots, X_n\}$ and $X$ is another bounded random variable we write $M(X; A)$ for the joint cumulant $M(X, X_1, \dots, X_n)$.

The definition of the deformed measure $\mathbb{P}^{\tb, \vm}$ is motivated by the following observation. 
\begin{lemma}\label{LemCum1} Let $\xi$ be a bounded random variable. If $\mathbb{P}^{\tb, \vm}$ is as in (\ref{eq:distrgen_deformed}) we have
\begin{equation}\label{eq:derivative_k}
\left(\prod_{j = k}^N \prod_{i = 1}^{m_j} \frac{\partial}{\partial t_i^j} \right) \mathbb E_{\mathbb{P}^{\tb, \vm}}\left[\xi\right]\bigg\rvert_{t^j_i = 0} = M( \xi; \{ G_L^j(v_i^j): j \in \{ k, \dots, N \}, i = 1, \dots, m_j\}),
\end{equation}
where the right side is the joint cumulant of the given random variables with respect to $\P$.
\end{lemma}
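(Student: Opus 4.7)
The plan is to exploit the product structure of the deformation in (\ref{eq:distrgen_deformed}) and reduce the computation to a derivative of a joint cumulant generating function. Let me index the observables by $\mathfrak{M} := \{(j,i) : j \in \llbracket k, N \rrbracket,\ i \in \llbracket 1, m_j \rrbracket\}$ and, for $\alpha = (j,i) \in \mathfrak{M}$, set $t_\alpha := t^j_i$, $v_\alpha := v^j_i$, and $G_\alpha(\ell) := G_L^j(v_\alpha) = \sum_{r=1}^j (v_\alpha - \ell_r^j/L)^{-1}$. Then the deformation factor in (\ref{eq:distrgen_deformed}) factorizes as $F(\ell; \tb) = \prod_{\alpha \in \mathfrak{M}} f_\alpha(\ell; t_\alpha)$, where
\[
 f_\alpha(\ell; t_\alpha) := \prod_{r=1}^{j}\left(1 + \frac{t_\alpha}{v_\alpha - \ell_r^j/L}\right)
\]
depends on the single variable $t_\alpha$ and satisfies $f_\alpha|_{t_\alpha = 0} = 1$, $\partial_{t_\alpha} f_\alpha|_{t_\alpha = 0} = G_\alpha(\ell)$. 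By the definition of $\mathbb{P}^{\tb, \vm}$ we also have $\mathbb{E}_{\mathbb{P}^{\tb, \vm}}[\xi] = \mathbb{E}_{\P}[\xi F]/\mathbb{E}_{\P}[F]$.

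The key step will be to replace $F$ by $\tilde F := \exp\bigl(\sum_{\alpha} t_\alpha G_\alpha\bigr) = \prod_\alpha \exp(t_\alpha G_\alpha)$. Since each factor $\exp(t_\alpha G_\alpha) = 1 + t_\alpha G_\alpha + O(t_\alpha^2)$ agrees with $f_\alpha$ modulo $t_\alpha^2$, the difference $F - \tilde F$, viewed as a formal power series in $\tb$ with bounded coefficients depending on $\ell$, lies in the ideal $I := \sum_\alpha (t_\alpha^2) \subset \mathbb{C}[[\tb]]$. Taking $\mathbb{E}_{\P}$ preserves this, and because both denominators $\mathbb{E}_{\P}[F]$ and $\mathbb{E}_{\P}[\tilde F]$ equal $1$ at $\tb = 0$ (hence are invertible as formal power series), a short calculation yields
\[
\frac{\mathbb{E}_{\P}[\xi F]}{\mathbb{E}_{\P}[F]} - \frac{\mathbb{E}_{\P}[\xi \tilde F]}{\mathbb{E}_{\P}[\tilde F]} \in I.
\]
The main obstacle is making this substitution step innocuous, which boils down to showing that the multilinear operator $\prod_\alpha \partial_{t_\alpha}|_{\tb = 0}$ annihilates $I$. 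This I would check directly via Leibniz: applied to any $t_\alpha^2 g(\tb)$, the operator always leaves a surviving factor of $t_\alpha$ that vanishes at $\tb = 0$. Consequently,
\[
\left(\prod_{\alpha \in \mathfrak{M}} \frac{\partial}{\partial t_\alpha}\right)\mathbb{E}_{\mathbb{P}^{\tb,\vm}}[\xi]\bigg|_{\tb=0} \;=\; \left(\prod_{\alpha \in \mathfrak{M}} \frac{\partial}{\partial t_\alpha}\right)\frac{\mathbb{E}_{\P}[\xi \tilde F]}{\mathbb{E}_{\P}[\tilde F]}\bigg|_{\tb=0}.
\]

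To finish, I would introduce an auxiliary variable $z$ and set $\Lambda(z, \tb) := \mathbb{E}_{\P}\bigl[\exp(z \xi + \sum_{\alpha} t_\alpha G_\alpha)\bigr]$, which is analytic in a neighborhood of $(z, \tb) = 0$ since $\xi$ and the $G_\alpha$ are bounded random variables under $\P$ (boundedness of $G_\alpha$ follows from the assumption $v_\alpha - y \neq 0$ for $y \in [-N\theta/L, (M-\theta)/L]$). A direct differentiation in $z$ at $z = 0$ gives $\mathbb{E}_{\P}[\xi \tilde F]/\mathbb{E}_{\P}[\tilde F] = \partial_z \log \Lambda(z, \tb)|_{z = 0}$. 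Commuting $\partial_z$ past $\prod_\alpha \partial_{t_\alpha}$ (valid by analyticity) and evaluating at $z = 0$, $\tb = 0$ then produces exactly $M(\xi; \{G_L^j(v_i^j) : (j,i) \in \mathfrak{M}\})$ by the definition (\ref{CumDef}) of joint cumulants. Every part of the argument other than the substitution $F \to \tilde F$ is routine bookkeeping.
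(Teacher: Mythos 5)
Your proposal is correct and follows essentially the same route as the paper: the paper likewise identifies the joint cumulant (via differentiating (\ref{CumDef}) in the auxiliary variable) with the mixed $t$-derivative of $\mathbb{E}_{\P}[\xi \exp(\sum t^j_i G^j_L(v^j_i))]/\mathbb{E}_{\P}[\exp(\sum t^j_i G^j_L(v^j_i))]$, and then uses $\exp(tG^j_L(z)) = \prod_i(1 + t/(z-\ell^j_i/L)) + O(t^2)$ to swap in the deformation factor, which is exactly your $F \leftrightarrow \tilde F$ substitution modulo the ideal generated by the $t_\alpha^2$. Your version just makes the $O(t^2)$ bookkeeping and the ratio manipulation more explicit, which is fine.
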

\begin{remark}
The above result is analogous to \cite[Lemma 2.4]{BGG}, which in turn is based on earlier related work in random matrix theory. We present a proof below for the sake of completeness.
\end{remark}
\begin{proof}
Recall from (\ref{CumDef}) that the joint cumulant of $\xi ,\xi^{k}_1, \dots, \xi^{k}_{m_k}$, $\xi^{k+1}_1, \dots, \xi^{k+1}_{m_{k+1}}$, $\dots,\xi^{N}_1, \dots, \xi^{N}_{m_N} $ is given by
$$ \left(\prod_{j = k}^N \prod_{i = 1}^{m_j} \frac{\partial}{\partial t_i^j} \right) \frac{\partial}{\partial t_0} \log \left(\mathbb{E} \exp \left(t_0 \xi +  \sum_{j = k}^N \sum_{i = 1}^{m_j} t^j_i \xi^j_i \right) \right) \Bigg\rvert_{t_0= 0, t^j_i=0}. $$
Performing the differentiation with respect to $t_0$ we can rewrite the above as
$$\left(\prod_{j \in k}^N \prod_{i = 1}^{m_j} \frac{\partial}{\partial t_i^j} \right)  \frac{\mathbb{E}  \left[\xi \exp \left(  \sum_{ j = k}^N \sum_{i = 1}^{m_j} t^j_i \xi^j_i \right) \right] }{\mathbb{E}  \left[ \exp \left(  \sum_{ j = k}^N \sum_{i = 1}^{m_j} t^j_i \xi^j_i\right)  \right]} \Bigg \rvert_{t^j_i=0}. $$
Set $\xi^j_i = G^j_L(v^j_i)$ for $j \in \{k, \dots, N \}$ and $i = 1, \dots, m_j$ and observe that 
$$\exp \left( t G^j_L(z)\right) = \prod_{i = 1}^j \left(1+ \frac{t}{z-\ell^j_i/L} \right) + O(t^2).$$
The above statements imply the statement of the lemma.
\end{proof}

In the remainder of this section we utilize our multi-level Nekrasov equations, Theorems \ref{MainNek1} and \ref{MainNek2}, to derive integral equations that relate the joint cumulants of $G_L^{k}, \dots, G_L^{N}$. We proceed to make a simplifying assumption about the measures $\P$, summarized in the following definition.
\begin{definition}\label{DefAss}
We assume that there exists an open set $\mathcal{M}  \subseteq \mathbb{C}$, such that $[- N\theta, M + 1 -\theta] \subseteq \mathcal{M}$. In addition, we require the existence of holomorphic functions $\Phi^+_N, \Phi^-_N$ on $\mathcal{M}$ such that 
\begin{equation}\label{eqPhiN}
\begin{split}
&\frac{w_N(x)}{w_N(x-1)}=\frac{\Phi_N^+(x)}{\Phi_N^-(x)},
\end{split}
\end{equation} 
whenever $x \in [ -N\theta +1, M -\theta]$. We mention that since $w_N(x) > 0 $ by assumption we must have that $\Phi_N^\pm(x)$ are non-vanishing, and also that the choice of $\Phi_N^\pm(x)$, saitsfying (\ref{eqPhiN}), is not unique as we can multiply $\Phi_N^\pm(x)$ by the same non-vanishing analytic function on $\mathcal{M}$ and still satisfy (\ref{eqPhiN}). 
\end{definition}

\begin{lemma}\label{CrudeCumExp} Suppose that $\P$ is as in (\ref{S1PDef}) for $\theta > 0$, and satisfies the assumptions in Definition \ref{DefAss}. Fix $L \in (0,\infty)$ and $m_j \in \mathbb{Z}_{\geq 0}$ for $j \in \llbracket  k, N \rrbracket$. For each $j \in \llbracket  k, N \rrbracket$ we fix parameters $\vm^j = (v^j_1, \dots, v^j_{m_j})$, such that $L \cdot v_i^j \in \mathcal{M} \setminus [- N\theta, M -\theta] $ for $j \in \llbracket k, N \rrbracket$ and $i \in \llbracket 1,  m_j \rrbracket$, where $\mathcal{M}$ is as in Definition \ref{DefAss}. Suppose further that $\Gamma$ is a positively oriented simple contour contained in $L^{-1} \cdot \mathcal{M}$ that encloses $[-\theta N /L, (M-\theta)/L]$. We also assume that $\Gamma$ excludes the points $\{ v_i^j, v_i^j + L^{-1} : j \in \llbracket k, N \rrbracket, \hspace{2mm} i \in \llbracket 1, m_j\rrbracket \}$. Finally, let $v \in \mathbb{C}$ be outside of $\Gamma$, 
$$S_v(Lz) = \frac{L(z-v)}{(Lz + N\theta)(Lz - M -1 + \theta)},$$
and $\Phi_N^{\pm}$ be as in Definition \ref{DefAss}. Then we have the following formula 
\begin{equation}\label{Exp1V3}
\begin{split}
&0=  \sum_{\substack{F_r \subseteq \llbracket 1, m_r \rrbracket \\  r = k, \dots, N }}    \prod_{i = k}^N {\bf 1 } \{F_i = \llbracket 1, m_i \rrbracket\} \cdot  \oint_{\Gamma}\frac{dz \Phi_N^-(Lz)}{2 \pi \i  S_v(Lz)}\cdot  M\left( \prod_{p = 1}^N\frac{Lz- \ell^N_p -\theta}{Lz - \ell^N_p}; F_k, \dots, F_N \right)  \\
& +  \oint_{\Gamma} \frac{dz \Phi_N^+(Lz)}{2 \pi \i  S_v(Lz)}   \prod_{i = k}^N \prod_{f \in F_i^c}\frac{1}{L(v_f^i-z)(v_f^i - z + L^{-1})} \cdot  M\left( \prod_{p = 1}^{k}\frac{Lz- \ell^{k}_p + \theta - 1}{Lz - \ell^{k}_p - 1}; F_k, \dots, F_N \right) \\
&  +   \sum\limits_{j=k+1}^{N}  \prod_{i = k}^{j-1} {\bf 1 } \{F_i = \llbracket 1, m_i \rrbracket\} \cdot \oint_{\Gamma} \frac{dz\Phi_N^+(Lz)}{2\pi \i S_v(Lz)}    \cdot   \prod_{i = j}^N \prod_{f \in F_i^c}\frac{1}{L(v_f^i-z)(v_f^i - z + L^{-1})}  \\
&\times    M \left( \Pi_1^{\theta, j}(Lz); F_k, \dots, F_N \right),
\end{split}
\end{equation}
where 
\begin{equation}\label{S42E2}
\Pi_1^{\theta,j}(z) = \begin{cases} \mathlarger{\frac{\theta}{1- \theta} \cdot \prod_{i = 1}^j \frac{z - \ell_i^j - \theta}{z - \ell_i^j - 1} \cdot \prod_{i = 1}^{j-1} \frac{z - \ell_i^{j-1} + \theta - 1}{z - \ell_i^{j-1} }} &\mbox{ if } \theta \neq 1, \\ \mathlarger{\sum_{i = 1}^j \frac{1}{z - \ell_i^j- 1} - \sum_{i = 1}^{j-1} \frac{1}{z - \ell_i^{j-1}}} &\mbox{ if } \theta = 1. \end{cases}
\end{equation}
In equation (\ref{Exp1V3}) the sum is over $(N-k+1)$-tuples of subsets $F_r \subseteq  \llbracket 1, m_r \rrbracket$ for $r = k, \dots, N$ and we have written $M(\xi; F_k, \dots, F_N)$ in place of the joint cumulant $M(\xi; \{ G_L^j(v_i^j): j \in \llbracket k, N \rrbracket, i \in F_j \})$ (with respect to the measure $\P$), where we recall that this notation was introduced just before Lemma \ref{LemCum1} and $G_L^j$ was defined in (\ref{S3DefG}). Also, we have $F_i^c= \llbracket 1, m_i \rrbracket \setminus F_i$. 
\end{lemma}
\begin{remark}\label{RemOnly1} The formulas in (\ref{Exp1V3}) constitute a large family of equations --  one for each complex $v$ and set of parameters $\{v_i^j\}$ outside of $\Gamma$. We also mention that in deriving (\ref{Exp1V3})  we will utilize (\ref{eq:mN1_gen}) and (\ref{eq:mN1_simple}). One could also use (\ref{eq:mN2_gen}) and (\ref{eq:mN2_simple}) to derive analogous formulas; however, (\ref{Exp1V3}) will suffice for the purposes of the present paper. 
\end{remark}
\begin{remark}\label{SvChoice} The choice of $S_v$ is suitable for our analysis in Section \ref{Section4}; however, for other types of analyses different choices of $S_v$ may be more suitable. The only requirement for $S_v$ to make (\ref{Exp1V3}) hold is that $1/S_v(Lz)$ is analytic in the region enclosed by $\Gamma$, and that (\ref{SvProof}) in the proof of Lemma \ref{CrudeCumExp} below holds. In particular, if $\Phi^-(-N\theta) = 0 = \Phi^+(M + 1 - \theta)$, then (\ref{SvProof})  automatically holds as $\R_1(Lz) = 0$ in view of (\ref{S3Phis}) and Remarks \ref{RemBoundary} and \ref{RemBoundary2}. If $\Phi^-(-N\theta) = 0 = \Phi^+(M + 1 - \theta)$, then (\ref{Exp1V3}) holds with $S_v(Lz) = L(z-v) H(z)$ for any analytic function $H(z)$ that does not vanish in the region enclosed by $\Gamma$.
\end{remark}

\begin{proof} Let $\mathbb{P}^{\tb, \vm}$ be the deformed measures from (\ref{eq:distrgen_deformed}) with parameters $\vm = (\vm^{k}, \dots, \vm^{N})$ as in the statement of the lemma, and $\tb = (\tb^{k}, \dots, \tb^{N})$, $\tb^j = (t^j_1, \dots, t^j_{m_{j}})$. Here we assume that $t_i^j$ are sufficiently small in absolute value so that $v^j_i + t^j_i - y \neq 0$ and $v_i^j - y \neq 0$ for all $j \in \llbracket k, N \rrbracket$, $i \in \llbracket 1,  m_j \rrbracket$, and all $y \in [- N \theta/ L, (M  - \theta)/ L]$. In addition, we require that $t^j_i$ are sufficiently close to zero so that $Z(\tb, \vm) \neq 0$, where we recall that $Z(\tb, \vm)$ is the complex normalization constant in (\ref{eq:distrgen_deformed}). Finally, we require that $t^j_i$ are sufficiently close to zero so that $v_i^j + t_i^j$, $v_i^j + t_i^j +L^{-1}$ lie outside of the region enclosed by $\Gamma$. In view of our assumptions in the statement of the lemma, we can find $\epsilon > 0$ sufficiently small, depending on $\theta, M, L, \Gamma,\vm  = (\vm^{k}, \dots, \vm^{N})$ and $w_N$, so that all of the conditions we listed are satisfied whenever $|t_i^j| < \epsilon$ for all $j \in \llbracket k, N \rrbracket$, $i \in \llbracket 1, m_j \rrbracket$.

Applying (\ref{eq:mN1_gen}), if $\theta \neq 1$, or (\ref{eq:mN1_simple}), if $\theta = 1$, to the deformed measures $\mathbb{P}^{\tb, \vm}$, we obtain that the following function $R_1$ is analytic in $L^{-1} \cdot \mathcal{M}$ 
\begin{equation}\label{Exp1V1}
\begin{split}
R_1(Lz)= \hspace{2mm} & \Phi_N^-(Lz)\cdot \prod_{i = k}^{N} A_i(z) \cdot  \mathbb{E}_{\mathbb{P}^{\tb, \vm}}\left[ \prod_{p = 1}^N\frac{Lz- \ell^N_p -\theta}{Lz - \ell^N_p} \right] \\
& + \Phi_N^+(Lz) \cdot  \prod_{i = k}^N B_i(z) \cdot  \mathbb{E}_{\mathbb{P}^{\tb, \vm}} \left[ \prod_{p = 1}^{k}\frac{Lz- \ell^{k}_p + \theta - 1}{Lz - \ell^{k}_p - 1} \right] +    \sum\limits_{j=k+1}^{N} \Phi_N^+(Lz)     \\
& \times \prod_{i = k}^{j-1} A_i(z)  \cdot \prod_{i = j}^{N} B_i(z)   \cdot \mathbb{E}_{\mathbb{P}^{\tb, \vm}} \left[ \Pi_1^{\theta, j}(Lz) \right] -\R_1(Lz),
\end{split}
\end{equation}
where for $j \in \llbracket k, N \rrbracket$ we have
\begin{equation}
\begin{split}
A_j(z) = \prod_{i = 1}^{m_j} \left( v_i^j  + t_i^j - z + L^{-1} \right) \left(v_i^j - z \right)  \mbox{ and } B_j(z) = \prod_{i = 1}^{m_j} \left( v_i^j  + t_i^j - z  \right)\left( v_i^j - z + L^{-1}\right).
\end{split}
\end{equation}

In deriving (\ref{Exp1V1}) we used that $\mathbb{P}^{\tb, \vm}$ is of the form (\ref{S2PDef}) with $w_N$ given by 
$$w_N(x) \cdot \prod_{i  = 1}^{m_N} \left( 1 + \frac{t_i^N}{v_i^N- x/L} \right),$$
and $w_j$ for $j \in \llbracket k,  N-1 \rrbracket $ given by
$$ \prod_{i  = 1}^{m_j} \left( 1 + \frac{t_i^j}{v_i^j- x/L} \right).$$
In particular, the conditions of Theorem \ref{MainNek1} (if $\theta \neq 1$) and Theorem \ref{MainNek2} (if $\theta = 1$) are satisfied with $\mathcal{M}$ as in the statement of the present lemma and functions 
\begin{equation}\label{S3Phis}
\begin{split}
&\phi^{N+1}_1(z) = \Phi_N^-(z) \cdot \prod_{i = k}^{N} A_i(z/L), \mbox{ and }\phi^j_1(z) = \Phi_N^+(z) \cdot  \prod_{i = k}^{j-1} A_i(z/L)  \cdot \prod_{i = j}^{N} B_i(z/L) \mbox{ for $j \in \llbracket k, N \rrbracket$}.
\end{split}
\end{equation}
In deriving (\ref{Exp1V1}) we also changed variables $z \rightarrow Lz$.\\

We next proceed to divide both sides of (\ref{Exp1V1}) by $2 \pi \i \cdot S_v(Lz) \cdot \prod_{i = k}^N A_i(z)$ and integrate over $\Gamma$. Note that the left side of (\ref{Exp1V1}) evaluates to $0$ by Cauchy's theorem, as we are integrating an analytic function over a closed contour. Here we used that by assumption $v$ lies outside of the region enclosed by $\Gamma$ and the same is true for the zeros of the function $\prod_{i = k}^N A_i(z)$ by our assumptions on $\vm, \tb$. Furthermore, we have that 
\begin{equation}\label{SvProof}
\int_{\Gamma} \frac{\R_1(Lz)}{2 \pi \i \cdot S_v(Lz) \cdot \prod_{i = k}^N A_i(z)} dz  = 0,
\end{equation}
by Cauchy's theorem, since the simple poles of $\R_1(Lz)$ at $z = -N\theta/L$ and $z = (M+1-\theta)/L$ are canceled by the zeros of $1/S_v(Lz)$ at these points, implying that the integrand above is analytic in the region enclosed by $\Gamma$. Thus we obtain
\begin{equation}\label{Exp1V2}
\begin{split}
&0=    \oint_{\Gamma}\frac{dz \Phi_N^-(Lz)}{2 \pi \i S_v(Lz)}\cdot  \mathbb{E}_{\mathbb{P}^{\tb, \vm}}\left[ \prod_{p = 1}^N\frac{Lz- \ell^N_p -\theta}{Lz - \ell^N_p} \right]+   \oint_{\Gamma} \frac{dz \Phi_N^+(Lz)}{2 \pi \i  S_v(Lz)}  \prod_{i = k}^N \frac{B_i(z)}{A_i(z)}  \\
&   \times \mathbb{E}_{\mathbb{P}^{\tb, \vm}} \left[ \prod_{p = 1}^{k}\frac{Lz- \ell^{k}_p + \theta - 1}{Lz - \ell^{k}_p - 1} \right] +  \sum\limits_{j=k+1}^{N} \oint_{\Gamma} \frac{dz\Phi_N^+(Lz)}{2 \pi \i  S_v(Lz)} \prod_{i = j}^{N} \frac{B_i(z)}{A_i(z)}   \cdot \mathbb{E}_{\mathbb{P}^{\tb, \vm}} \left[  \Pi_1^{\theta, j}(Lz)  \right].
\end{split}
\end{equation}

We next apply the operator $\mathcal{D} = \prod_{j = k}^N \prod_{i = 1}^{m_j} \partial_{t_i^j}$ to both sides of (\ref{Exp1V2}) and set $t_i^j = 0$ for $j \in \llbracket k, N \rrbracket$, $i \in \llbracket 1, m_j \rrbracket$. Notice that when we perform the differentiation to the right side some of the derivatives could land on the products $\prod_i \frac{B_i(z)}{A_i(z)} $ and some on the expectations $\mathbb{E}_{\mathbb{P}^{\tb, \vm}}$. We  split the result of the differentiation based on subsets $F_r$ for $r \in \llbracket k, N \rrbracket$. The set $F_r$ consists of the indices $f \in \llbracket 1, m_r \rrbracket$ such that $\partial_{t_f^r}$ differentiates the expectation. The result of applying $\mathcal{D}$ to (\ref{Exp1V2}) and setting $t_i^r = 0$  is then precisely (\ref{Exp1V3}),  once we use Lemma \ref{LemCum1} and
$$\frac{( v  + t - z  )( v - z + L^{-1})}{( v  + t - z + L^{-1} ) (v - z )} \bigg\rvert_{t = 0} \hspace{-2mm} = 1 \mbox{ and } \left( \partial_t \frac{( v  + t - z  )( v - z + L^{-1})}{( v  + t - z + L^{-1} ) (v - z )}  \right)  \bigg\rvert_{t = 0}  = \frac{1}{L(v- z)(v- z + L^{-1})}.$$
\end{proof}

%-------------------------------------------------------------------------------------------------------------------------------------------------------------------------------------------------
% Section 4
%
%-------------------------------------------------------------------------------------------------------------------------------------------------------------------------------------------------
\section{Continuous limits}\label{Section4} The purpose of this section is to prove Theorem \ref{ManyLevelLoopThm_intro}. In Section \ref{Section4.1} we recall the continuous $\beta$-corners processes from Section \ref{Section1.1} and derive a few of their properties. In Section \ref{Section4.2} we summarize some useful notation for Jack symmetric functions and explain how the latter relate to the measures in (\ref{S1PDef}). In Section \ref{Section4.3} we derive the continuous measures from Section \ref{Section4.1} as diffuse limits of the measures in (\ref{S1PDef}). Finally, in Section \ref{Section4.4} we derive our continuous multi-level loop equations by combining our weak convergence result, Proposition \ref{prop_cont_limit} from Section \ref{Section4.3}, and Lemma \ref{CrudeCumExp}.

%-------------------------------------------------------------------------------------------------------------------------------------------------------------------------------------------------
% Section 4.1
%
%-------------------------------------------------------------------------------------------------------------------------------------------------------------------------------------------------
\subsection{Continuous $\beta$-corners processes}\label{Section4.1} 

Let us fix $N \in \mathbb{N}$, $a_-,a_+ \in \mathbb{R}$ with $a_- < a_+$ and $\theta > 0$. In addition, we let $V: [a_-,a_+] \rightarrow \mathbb{R}$ be continuous. With this data we define the following probability density function
\begin{equation}\label{S4Corners1}
\begin{split}
&f(y) =  (Z_N^c)^{-1} \cdot {\bf 1}_{\mathcal{G}_N}  {\bf 1}\{ a_- < y^N_1, y^N_N < a_+\} \cdot  g_N(y) \mbox{, where } y = (y^N, \dots, y^1) \in \mathbb{R}^N \times \cdots \times \mathbb{R},\\
&g_N(y) = \prod_{1 \leq i<j \leq N} \hspace{-2mm} (y_j^N-y_i^N)  \hspace{-1mm}  \prod_{j = 1}^{N-1}\left(\prod_{1\leq a<b \leq j}
(y_b^j-y_a^j)^{2-2\theta} \prod_{a=1}^j\prod_{b=1}^{j+1}|y_a^j-y_b^{j+1}|^{\theta-1}\right)  \prod_{i = 1}^N e^{-N \theta V(y^N_i)},
\end{split}
\end{equation}
\begin{equation}\label{S4Corners2}
\begin{split}
&Z^c_N = \prod_{j = 1}^N \frac{\Gamma(\theta)^j}{\Gamma(j \theta)}  \cdot \int_{\mathcal{W}^{\circ}_N} {\bf 1}\{ a_- < y_1, y_N < a_+\} \cdot \prod_{1 \leq i<j \leq N} (y_j - y_i)^{2\theta} \prod_{i = 1}^N e^{-N \theta V(y_i)}dy_i,
\end{split}
\end{equation}
\begin{equation}\label{S4Corners3}
\begin{split}
&\mathcal{W}^{\circ}_N = \{ \vec{y} \in \mathbb{R}^N: y_1 < y_2 < \cdots < y_N\}, \hspace{2mm} \mathcal{G}_N = \{ y = (y^N, \dots, y^1) \in \mathbb{R}^N \times \cdots \times \mathbb{R}:  \\
&y_{i}^{j}<y_i^{j-1}<y_{i+1}^{j}, \mbox{$j = 1,\dots, N$, $i = 1, \dots, j$} \}.
\end{split}
\end{equation}
Note that (\ref{S4Corners1}) is precisely the continuous $\beta$-corners process from (\ref{S1Corners1}) when $\theta = \beta/2$.

Let us briefly explain why $f(y)$ as in (\ref{S4Corners1}) is a density function. The fact that $f(y) \geq 0$ is immediate and so it suffices to show that the integral of $f(y)$ over $\mathbb{R}^N \times \cdots \times \mathbb{R}$ is equal to one. To prove the latter we use the following version of the Dixon-Anderson identity \cite{Anderson, Dixon} (see \cite[Equation (2.2)]{FW08}), which states that for $x_1< \cdots < x_N$ we have
\begin{equation}\label{Dix}
\int_{x_1}^{x_2}dy_1 \cdots \int _{x_{N-1}}^{x_N} \hspace{-3mm} dy_{N-1}\hspace{-3mm}  \prod_{1 \leq i < j \leq N-1} \hspace{-3mm} (y_j - y_i) \prod_{i = 1}^{N-1} \prod_{j = 1}^N |y_i - x_j|^{\theta - 1}   = \frac{\Gamma(\theta)^{N}}{\Gamma(N \theta)} \prod_{1 \leq i < j \leq N}(x_j - x_i)^{2\theta - 1}.
\end{equation}
Using (\ref{Dix}) $k-1$ times, where $k \in \llbracket 1, N \rrbracket$, we see that 
\begin{equation}\label{S4CornersProj}
\begin{split}
&g_{N,k}(y^N, \dots, y^k) := \int_{\mathbb{R}^{k-1} \times \cdots \times \mathbb{R}} g_N(y) dy^{k-1} \cdots dy^1 =  \prod_{j = 1}^{k} \frac{\Gamma(\theta)^j}{\Gamma(j\theta)} \cdot  \prod_{1 \leq i<j \leq N}  (y_j^N-y_i^N)    \\
&\times \prod_{1 \leq i<j \leq k}  (y_j^k-y_i^k)^{2\theta - 1} \prod_{j = k}^{N-1}\left(\prod_{1\leq a<b \leq j}(y_b^j-y_a^j)^{2-2\theta} \prod_{a=1}^j\prod_{b=1}^{j+1}|y_a^j-y_b^{j+1}|^{\theta-1}\right)  \prod_{i = 1}^N e^{-N \theta V(y^N_i)},
\end{split}
\end{equation}
where $dy^j = dy_1^j \cdots dy_j^j$ is the Lebesgue measure on $\mathbb{R}^j$. We see from (\ref{S4CornersProj}) applied to $k = N$ that
\begin{equation*}
\begin{split}
&\int_{\mathbb{R}^N \times \cdots \times \mathbb{R}} f(y) dy^N \cdots dy^1 = (Z_N^c)^{-1} \cdot \prod_{j = 1}^N \frac{\Gamma(\theta)^j}{\Gamma(j \theta)} \\
& \times \int_{\mathcal{W}^{\circ}_N} {\bf 1}\{ a_- < y^N_1, y^N_N < a_+\} \cdot \prod_{1 \leq i<j \leq N} (y^N_j - y^N_i)^{2\theta} \prod_{i = 1}^N e^{-N \theta V(y^N_i)}dy^N_i = 1,
\end{split}
\end{equation*}
where in the last equality we used the definition of $Z^c_N$ from (\ref{S4Corners2}). This proves $f(y)$ is a density.\\

Using (\ref{S4CornersProj}) we see that if $(Y^N, \dots, Y^1)$ is a random vector taking value in $\mathbb{R}^N \times \cdots \times \mathbb{R}^{1}$, whose distribution has density $f(y)$ as in (\ref{S4Corners1}) then $(Y^N, \dots, Y^k)$  is a random vector taking value in $\mathbb{R}^N \times \cdots \times \mathbb{R}^{k}$ with density
\begin{equation}\label{S4CornersProj2}
\begin{split}
&f_{N,k}(y^N, \dots, y^k) :=  (Z_N^c)^{-1} \cdot {\bf 1}_{\mathcal{G}_{N,k}} \cdot  {\bf 1}\{ a_- < y^N_1, y^N_N < a_+\} \cdot g_{N,k}(y^N, \dots, y^k) , \mbox{ where } \\
& \mathcal{G}_{N,k} = \{ y = (y^N, \dots, y^k) \in \mathbb{R}^N \times \cdots \times \mathbb{R}^k: y_{i}^{j}<y_i^{j-1}<y_{i+1}^{j}, \mbox{$j = k,\dots, N$, $i = 1, \dots, j$} \},
\end{split}
\end{equation}
and $g_{N,k}$ is as in (\ref{S4CornersProj}). When $k = N$ we have that (\ref{S4CornersProj2}) becomes 
\begin{equation}\label{S4CornersProjTop}
\begin{split}
f_{N, N}(y^N) =  \hspace{2mm} & (Z_{N}^c)^{-1} \cdot \prod_{j = 1}^{N} \frac{\Gamma(\theta)^j}{\Gamma(j\theta)}  \cdot  {\bf 1}\{ a_- < y^N_1 < y^N_2 < \cdots < y^N_N < a_+\}  \\
& \times  \prod_{1 \leq i<j \leq N} (y_j^N-y_i^N)^{2 \theta}  \prod_{i = 1}^N e^{-N \theta V(y^N_i)},
\end{split}
\end{equation}
which is precisely the $\beta$-log gas measures studied in \cite{BoGu} once one sets $\theta = \beta/2$. The fact that the projection of $f(y)$ from (\ref{S4Corners1}) to the top level $y^N$ is given by (\ref{S4CornersProjTop}) is the reason we view (\ref{S4Corners1}) and (more generally) (\ref{S4CornersProj2}) as natural multi-level generalizations of the measures in \cite{BoGu}.

%-------------------------------------------------------------------------------------------------------------------------------------------------------------------------------------------------
% Section 4.2
%
%-------------------------------------------------------------------------------------------------------------------------------------------------------------------------------------------------
\subsection{Jack symmetric functions}\label{Section4.2} In this section we summarize some basic facts about Jack symmetric functions. Our discussion will follow \cite[Section 6]{DK2020} and \cite[Section VI.10]{Mac}, and we refer to the latter for more details.

We fix $\theta > 0$, $N \in \mathbb{N}$ and set
$$\Lambda^{\infty}_N = \{  (\lambda_1, \dots, \lambda_N) \in \mathbb{Z}^N : \lambda_1\geq  \lambda_2 \geq \cdots \geq \lambda_N \geq 0 \}.$$
Viewing an element $\lambda = (\lambda_1, \dots, \lambda_N) \in \Lambda^{\infty}_N$ as a {\em partition} or equivalently as a {\em Young diagram} with $\lambda_1$ left justified boxes in the top row, $\lambda_2$ in the second row and so on we denote the {\em Jack polynomial} with $n$ variables corresponding to $\lambda$ by $J_{\lambda}(x_1, \dots, x_n)$. When $x_1 = x_2 = \cdots = x_n = 1$ we denote $J_{\lambda}(x_1, \dots, x_n)$ by $J_\lambda(1^n)$ and from \cite[Equation (6.10)]{DK2020} we have 
\begin{equation}\label{jackpoly2}
J_\lambda(1^{N}) = \prod_{i = 1}^{N} \frac{\Gamma(\theta)}{\Gamma( i \theta)} \times \prod_{1 \leq i < j \leq N} \frac{\Gamma(\ell_i - \ell_j + \theta)}{\Gamma(\ell_i - \ell_j)},
\end{equation}
where as usual $\ell_i = \lambda_i - i \cdot \theta$.

If $\mu \in \Lambda^{\infty}_{N-1}$ we denote by $J_{\lambda/\mu}$ the {\em skew Jack polynomial} and from \cite[Equation (6.10)]{DK2020} 
\begin{equation}\label{SkewJackPoly}
\begin{split}
J_{\lambda/ \mu}(1) = {\bf 1} \{ \lambda \succeq \mu\} \cdot & \prod_{1 \leq i < j \leq N} \frac{\Gamma(\ell_i - \ell_j + 1 - \theta)}{\Gamma(\ell_i - \ell_j) } \cdot \prod_{1 \leq i < j \leq N-1} \frac{\Gamma(m_i - m_j + 1)}{\Gamma(m_i - m_j + \theta)} \\
&\times \prod_{1 \leq i < j \leq N} \frac{\Gamma(m_i - \ell_j)}{ \Gamma(m_i - \ell_j + 1 - \theta)}  \cdot \prod_{1 \leq i \leq j \leq N-1} \frac{\Gamma(\ell_i - m_j + \theta)}{\Gamma(\ell_i - m_j + 1)},  
\end{split}
\end{equation}
where $\ell_i = \lambda_i  - i  \cdot \theta$, $m_i = \mu_i - i \cdot \theta$ and $\lambda \succeq \mu$ means $\lambda_1 \geq \mu_1 \geq \lambda_2 \geq \mu_2 \cdots \geq \mu_{N-1} \geq \lambda_N$. The skew Jack polynomials satisfy the following identity, see \cite[Equation (6.12)]{DK2020}, 
\begin{equation}\label{S7Branchv2}
J_\lambda(1^N)= \sum_{\lambda^1 \preceq \lambda^2 \preceq \cdots \preceq \lambda^{N-1}\preceq \lambda} J_{\lambda / \lambda^{N-1}}(1) \cdot J_{\lambda^{N-1}/ \lambda^{N-2}}(1) \cdot J_{\lambda^{N-2}/ \lambda^{N-3}}(1) \cdots J_{\lambda^{2}/ \lambda^{1}}(1) ,
\end{equation}
which is a special case of the branching relations for Jack symmetric functions.

With the above notation we can prove the projection statement we made in Remark \ref{S1ProjR}.
\begin{lemma}\label{ProjLemma} Fix $N \in \mathbb{N}$, $M \in \mathbb{Z}_{\geq 0}$ and $\theta > 0$. Let $(\Omega, \mathcal{F},\mathbb{P})$ be a probability space, and let $(\ell^N, \dots, \ell^1)$ be a random vector defined on this space whose distribution under $\mathbb{P}$ is given by (\ref{S1PDef}) with $k = 1$. Then for any $m \in \llbracket 1, N \rrbracket$ distribution of $(\ell^N, \dots, \ell^m)$ is given by (\ref{S1PDef}) with $k = m$. 
\end{lemma}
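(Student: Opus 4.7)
The plan is to identify the factors in (\ref{S1PDef}) with evaluations of (skew) Jack polynomials at $1$, and then apply the branching relation (\ref{S7Branchv2}) to sum out the bottom $m-1$ levels.

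First I would observe that by directly comparing (\ref{S1PDef2}) with (\ref{SkewJackPoly}) (taking $N \to j+1$, $\lambda \to \lambda^{j+1}$, $\mu \to \lambda^j$) one has the identification
\begin{equation*}
I(\ell^{j+1}, \ell^j) = J_{\lambda^{j+1}/\lambda^j}(1)
\end{equation*}
(with the indicator ${\bf 1}\{\lambda^{j+1} \succeq \lambda^j\}$ in (\ref{SkewJackPoly}) being automatic on $\XX$). Similarly, (\ref{jackpoly2}) yields
\begin{equation*}
H^b(\ell^k) = A_k^{-1} \cdot J_{\lambda^k}(1^k), \qquad A_k := \prod_{i=1}^k \frac{\Gamma(\theta)}{\Gamma(i\theta)},
\end{equation*}
and in particular $H^b(\ell^1) = J_{\lambda^1}(1) = 1$.

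Next I would marginalize the random vector from (\ref{S1PDef}) with $k=1$ onto its last $N-m+1$ coordinates. Fix $(\ell^N, \dots, \ell^m) \in \XX$ with $k = m$; summing (\ref{S1PDef}) over all admissible $(\ell^{m-1}, \dots, \ell^1) \in \mathbb{W}^{\theta,M}_{m-1} \times \cdots \times \mathbb{W}^{\theta,M}_1$ with $\ell^m \succeq \ell^{m-1} \succeq \cdots \succeq \ell^1$, the factor $H^t(\ell^N) \prod_{j=m}^{N-1} I(\ell^{j+1},\ell^j)$ comes out of the sum, leaving
\begin{equation*}
\sum_{\lambda^1 \preceq \cdots \preceq \lambda^{m-1} \preceq \lambda^m} J_{\lambda^m/\lambda^{m-1}}(1) \cdot J_{\lambda^{m-1}/\lambda^{m-2}}(1) \cdots J_{\lambda^2/\lambda^1}(1) \cdot 1.
\end{equation*}
Here the boundedness constraints $\lambda^i \in \Lambda^M_i$ are automatic from the interlacing $\lambda^i \preceq \lambda^m \in \Lambda^M_m$. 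By the branching identity (\ref{S7Branchv2}) applied with $N = m$, this sum equals $J_{\lambda^m}(1^m) = A_m^{-1} \cdot H^b(\ell^m)$.

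Combining these observations gives
\begin{equation*}
\sum_{\ell^{m-1}, \dots, \ell^1} \mathbb{P}^{\theta,M}_{N,1}(\ell^N,\dots,\ell^1) = (Z_{N,1}^d)^{-1} \cdot A_m^{-1} \cdot H^t(\ell^N) \cdot H^b(\ell^m) \cdot \prod_{j=m}^{N-1} I(\ell^{j+1},\ell^j),
\end{equation*}
which has exactly the functional form of (\ref{S1PDef}) with $k=m$, up to the overall multiplicative constant $(Z_{N,1}^d)^{-1} A_m^{-1}$. Since the left-hand side is a probability distribution (being the marginal of one) and the right-hand side agrees with $\mathbb{P}^{\theta,M}_{N,m}$ up to a constant, normalization forces the two to coincide, giving $(Z_{N,1}^d)^{-1} A_m^{-1} = (Z_{N,m}^d)^{-1}$ and completing the proof. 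There is no genuine obstacle here; the only care needed is to verify cleanly that the bounded sum over $\Lambda^M_i$ matches the branching sum in (\ref{S7Branchv2}), which follows automatically from the interlacing conditions built into $\XX$.
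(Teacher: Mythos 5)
Your proof is correct and follows essentially the same route as the paper's: identify $I(\ell^{j+1},\ell^j)$ with $J_{\lambda^{j+1}/\lambda^j}(1)$ via (\ref{SkewJackPoly}), sum out the bottom $m-1$ levels with the branching relation (\ref{S7Branchv2}) (the boundedness constraints being automatic from interlacing, as you note), and recognize $J_{\lambda^m}(1^m)$ as $H^b(\ell^m)$ up to the constant $\prod_{i=1}^m \Gamma(\theta)/\Gamma(i\theta)$ absorbed in the normalization. The only blemish is a trivial inversion near the end: by your own identification $J_{\lambda^m}(1^m) = A_m\, H^b(\ell^m)$, so the prefactor in your final display should be $A_m$ rather than $A_m^{-1}$, but since it is swallowed by the normalization this does not affect the argument.
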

\begin{proof} Writing as usual $ \ell_i^j = \lambda_i^j - i \cdot \theta$ we have
\begin{equation*}
\begin{split}
&\mathbb{P}(\ell^N, \dots, \ell^m) =  Z_N^{-1} \sum_{\substack{\lambda^1, \dots, \lambda^{m-1}: \\
                                                   \lambda^1 \preceq \cdots \preceq \lambda^{m-1} \preceq \lambda^{m}} }  H^t(\ell^N) \cdot \prod_{j = 1}^{N-1}  \I(\ell^{j+1}, \ell^j)  \\ 
& = Z_N^{-1} \sum_{\substack{\lambda^1, \dots, \lambda^{m-1}: \\
                                                   \lambda^1 \preceq \cdots \preceq \lambda^{m-1} \preceq \lambda^{m}} }   H^t(\ell^N) \cdot \prod_{j = m}^{N-1}  \I(\ell^{j+1}, \ell^j) \prod_{i = 1}^{m-1}J_{\lambda^i/ \lambda^{i-1}}(1)  \\
& = Z_N^{-1}H^t(\ell^N) \cdot \prod_{j = m}^{N-1}  \I(\ell^{j+1}, \ell^j) J_{\lambda^m}(1^{m}) = Z_N^{-1} \cdot \prod_{i = 1}^m \frac{\Gamma(\theta)}{\Gamma(i\theta)} \cdot H^t(\ell^N) \cdot \prod_{j = m}^{N-1}  \I(\ell^{j+1}, \ell^j)  \cdot H^b(\ell^m),
\end{split}
\end{equation*}
where in the second equality we used (\ref{SkewJackPoly}), in the third equality we used (\ref{S7Branchv2}) and in the last equality we used (\ref{jackpoly2}). The above equation proves the statement of the lemma. 
\end{proof}

We denote the {\em conjugate} of a Young diagram $\lambda$ (i.e. the Young diagram obtained by transposing the diagram $\lambda$) by $\lambda'$ and then we have $\lambda'_i=|\{j\in \mathbb{Z}_{>0}\mid \lambda_j\ge i\}|$. For a box $\square=(i,j)$ of a Young diagram $\lambda$ we let $a(\square), \ell(\square)$ denote the {\em arm} and {\em leg lengths} respectively, i.e. 
$$a(\square)=\lambda_i-j, \quad \ell(\square)=\lambda'_j-i.$$
Below we make use of the {\em dual Jack polynomials} $\tilde{J}_{\lambda}$, defined as
\begin{equation}\label{dualJack}
\tilde{J}_{\lambda}={J}_{\lambda}\cdot \prod_{\square\in\lambda}\frac{a(\square)+\theta\ell(\square)+\theta}{a(\square)+\theta \ell(\square)+1}.
\end{equation}
Using (\ref{jackpoly2}) and (\ref{dualJack}) we obtain
\begin{equation}\label{Djackpoly2}
\begin{split}
&\tilde{J}_\lambda(1^{N}) = \prod_{i = 1}^{N} \frac{\Gamma(\theta)}{\Gamma( i \theta)} \times \prod_{1 \leq i < j \leq N} \frac{\Gamma(\ell_i - \ell_j + \theta)}{\Gamma(\ell_i - \ell_j)} \cdot  \prod_{i = 1}^{N} \prod_{j =1}^{\lambda_i} \frac{\lambda_i - j + \theta(\lambda_j' - i) + \theta}{\lambda_i - j + \theta(\lambda_j' - i) + 1 } \\
& = \prod_{i = 1}^{N} \frac{1}{\Gamma( i \theta)} \times \prod_{1 \leq i < j \leq N} \frac{\Gamma(\ell_i - \ell_j + 1)}{\Gamma(\ell_i - \ell_j + 1 - \theta)} \prod_{i = 1}^{N} \frac{\Gamma(\ell_i + \theta N + \theta )}{\Gamma(\ell_i + \theta N + 1 )}.
\end{split}
\end{equation}
Finally, we record the following formula, which is a special case of the {\em Cauchy identity} for Jack symmetric functions (see \cite[Section VI.10]{Mac}),
\begin{equation}\label{CauchyId}
\begin{split}
&\sum_{\lambda \in \Lambda^{\infty}_N} \prod_{i = 1}^Nq^{ \lambda_i} \cdot J_{\lambda}(1^N) \tilde{J}_\lambda(1^{N}) = (1 - q)^{-\theta N^2},
\end{split}
\end{equation}
provided that $q \in [0, 1)$.

%-------------------------------------------------------------------------------------------------------------------------------------------------------------------------------------------------
% Section 4.3
%
%-------------------------------------------------------------------------------------------------------------------------------------------------------------------------------------------------
\subsection{Diffuse limits}\label{Section4.3} 
In this section we derive the measures in (\ref{S4CornersProj2}) as diffuse limits of the measures from (\ref{S1PDef}). We summarize some notation in the following definition.

\begin{definition}\label{S4LDef}
Fix $\theta > 0$, $N \in \mathbb{N}$, $k \in \llbracket 1, N \rrbracket$ and $a_{\pm} \in \mathbb{R}$, $V$ as in Section \ref{Section4.1}. With this data we let for $L \in \mathbb{N}$, $\mathbb{P}_L$ be as (\ref{S1PDef}) with $\theta, k, N$ as just introduced, $M = M_L := \lfloor (a_+ - a_-) L \rfloor$ and $w_N(x) = w_{N,L}(x):= \exp \left( - N \theta V(a_- + xL^{-1}) \right).$
\end{definition}

We turn to the main result of the section.
\begin{proposition} \label{prop_cont_limit} Let $\left( \ell^{L,N}, \dots, \ell^{L,k} \right)$ be a sequence of random vectors, whose probability distribution is $\mathbb{P}_L$ as in Definition \ref{S4LDef}. Then the sequence of random vectors $(X^{L,N}, \dots, X^{L,k}) \in \mathbb{R}^N \times \cdots \times \mathbb{R}^k$, defined by 
$$X_i^{L,j} = a_- + L^{-1} \cdot \ell_{j-i+1}^{L,j} \mbox{ for $i \in \llbracket 1, j \rrbracket$ and $j \in \llbracket k , N \rrbracket$ }$$
 converges weakly to the random vector $(Y^N, \dots, Y^k)$, whose density is given by (\ref{S4CornersProj2}).
\end{proposition}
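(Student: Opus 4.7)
The plan is to establish pointwise convergence of the rescaled density and then lift it to weak convergence via tightness. Let $D = k + (k+1) + \cdots + N$ denote the total number of coordinates, and for each $\vec{y} = (y^N, \ldots, y^k)$ in the open interior of $\mathcal{G}_{N,k}$ let $\vec{\ell}(L) \in \mathfrak{X}^{\theta,M_L}_{N,k}$ be the (essentially unique) lattice point satisfying the scaling $\ell^{L,j}_{j-i+1} = L(y^j_i - a_-) + O(1)$. I will prove that
\begin{equation*}
L^{D} \, \mathbb{P}_L\bigl(\vec{\ell}(L)\bigr) \;\longrightarrow\; f_{N,k}(\vec{y}) \qquad \text{as } L \to \infty.
\end{equation*}
Tightness of $(X^{L,N}, \ldots, X^{L,k})$ is automatic since the particles are confined to $[a_-, a_- + (M_L+\theta)/L] \subseteq [a_-, a_+ + o(1)]$. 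A standard Riemann-sum argument (or equivalently Scheff\'e's lemma applied to the piecewise-constant densities associated with the $L^{-1}$-spaced lattice) then upgrades the pointwise convergence to the desired weak convergence.

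\textbf{Asymptotic analysis.} The pointwise limit follows from applying the Stirling-type identity $\Gamma(z+a)/\Gamma(z+b) = z^{a-b}\bigl(1 + O(z^{-1})\bigr)$ to every Gamma-ratio in $H^t$, $H^b$, and each $I(\ell^{j+1}, \ell^j)$ from (\ref{S1PDef2})--(\ref{S1PDef3}). For $\vec{y}$ in the interior, all same-level differences $\ell^{j}_p - \ell^{j}_q$ ($p\neq q$), and all cross-level differences $\ell^{j}_p - \ell^{j+1}_q$ that stay bounded away from the interlacing boundary, grow linearly in $L$, so Stirling applies and produces a power of the corresponding $y$-difference. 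Bookkeeping the exponents: the second Gamma-product of $I(\ell^{j+1}, \ell^j)$ contributes $(y^j_b - y^j_a)^{1-\theta}$ and the first Gamma-product of $I(\ell^j, \ell^{j-1})$ contributes $(y^j_b - y^j_a)^{1-\theta}$, summing to the continuous exponent $2-2\theta$ on each intermediate level $j \in \{k+1, \ldots, N-1\}$; the extreme levels $j = N$ and $j = k$ receive additional factors $(y^N_b - y^N_a)^\theta$ and $(y^k_b - y^k_a)^\theta$ from $H^t$ and $H^b$ respectively, reproducing the continuous exponents $1$ at the top and $2\theta-1+(2-2\theta) = 1$ at the bottom (after combining with the $j=k$ term of the product in (\ref{S4CornersProj})). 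The third and fourth Gamma-products of $I(\ell^{j+1}, \ell^j)$ yield the cross-level factor $|y^j_a - y^{j+1}_b|^{\theta-1}$, and the weight $w_{N,L}(\ell^N_p) \to \exp(-N\theta V(y^N_{N-p+1}))$ by continuity of $V$. Summing all resulting powers of $L$ together with the Jacobian $L^{-D}$ produces a single overall power of $L$ that is absorbed by the partition function.

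\textbf{Normalization and boundary layer.} To identify the limit of $Z^d_{N,k}$, I would use Lemma \ref{ProjLemma} to rewrite the sum defining $Z^d_{N,k}$ as a sum over top-level configurations; the resulting single-level sum is a discrete $\beta$-ensemble partition function of the form (\ref{S1SigleLevel}), whose diffuse scaling limit is obtained by the same Stirling analysis specialized to one level and yields precisely $Z^c_N$ together with the constant $\prod_{j=1}^k \Gamma(\theta)^j/\Gamma(j\theta)$ appearing in (\ref{S4CornersProj}). The hard part of the proof will be controlling the boundary layer where the Stirling expansion fails, i.e.~configurations in which some $\ell^{j}_p - \ell^{j-1}_p$ or $\ell^{j-1}_p - \ell^{j}_{p+1}$ is $O(1)$ rather than of order $L$. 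The plan is to split the state space into an interior region in which every such difference exceeds $L^{1-\epsilon}$ for a small $\epsilon > 0$, and a boundary layer; on the interior the Stirling analysis together with the uniform estimate $\Gamma(x+\alpha)/\Gamma(x+\beta) \le C_{\alpha,\beta}(1+x)^{\alpha-\beta}$ (valid for $x\ge 0$) produces a dominating integrable function, so dominated convergence yields the limit, while on the boundary layer the same uniform estimate gives a power-saving upper bound that forces the boundary contribution to both the density and the partition function to vanish as $L\to\infty$. Combining these ingredients completes the proof.
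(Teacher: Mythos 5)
Your skeleton (Stirling-type pointwise asymptotics, tightness from compact support, reduction of the normalizing constant via Lemma \ref{ProjLemma} to a single-level discrete $\beta$-ensemble sum) is sound, and the last of these is essentially the same Jack-branching input the paper uses. The genuine gap is in the step you yourself flag as the hard part: the assertion that the uniform bound $\Gamma(x+\alpha)/\Gamma(x+\beta)\le C_{\alpha,\beta}(1+x)^{\alpha-\beta}$ yields an integrable dominating function on the interior region and a power-saving bound on the boundary layer. The natural $L$-independent majorant, a constant multiple of the continuous profile $g_{N,k}$, fails pointwise: on cells where two parts of a partition coincide (equal same-level parts when $\theta<1$, touching interlacing parts when $\theta>1$), the corresponding discrete Gamma-ratio is a fixed positive constant while the continuous factor with positive exponent ($(y^j_b-y^j_a)^{2-2\theta}$ for $\theta<1$, resp.\ $|y^j_a-y^{j+1}_b|^{\theta-1}$ for $\theta>1$) vanishes inside that cell, so $f_L\le C\,g_{N,k}$ is false near such corners. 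If you instead bound the positive-exponent factors by constants, the remaining product need not be integrable: for small $\theta$ the cross-level factors $\prod|y^j_a-y^{j+1}_b|^{\theta-1}$ alone have non-integrable cluster singularities, and for $\theta\ge 3/2$ the same-level factors $(y^j_b-y^j_a)^{2-2\theta}$ alone do — integrability of $g_{N,k}$ relies precisely on the compensation between same-level and cross-level factors enforced by interlacing. The boundary-layer estimate has the same circularity: to show its contribution is negligible you must integrate the remaining singular factors against something summable, which is the domination problem again. This is exactly where the paper works hardest: for $\theta\ge1$ it uses log-convexity of $\log\Gamma$ to prove the exact inequalities (\ref{XOP7}) and get a constant majorant, and for $\theta\in(0,1]$ it constructs $L$-dependent dominating functions out of skew Jack polynomials whose total mass is computed exactly through the Cauchy identity (\ref{CauchyId}) and matched to a Selberg integral, then applies the generalized dominated convergence theorem. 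None of this is supplied by your sketch.

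One constructive remark: the Scheff\'e route you mention in passing is not ``equivalent'' to the Riemann-sum/domination route, and if you commit to it you can in fact bypass the difficulty. The rescaled piecewise-constant functions $y\mapsto L^{D}\,\mathbb{P}_L(\vec\ell(L))$ are genuine probability densities; their a.e.\ pointwise limit is $f_{N,k}$ once you have (i) the interior Stirling asymptotics and (ii) the asymptotics of $Z^d_{N,k}$, and (ii) follows from your Lemma \ref{ProjLemma} reduction because the single-level interaction $\prod_{i<j}(\ell_i-\ell_j)^{2\theta}$ has nonnegative exponent on a compact window, so bounded convergence suffices there. Since $f_{N,k}$ is a probability density (the Dixon--Anderson computation of Section \ref{Section4.1}), Scheff\'e's lemma then gives total-variation, hence weak, convergence with no dominating function at all — a genuinely different and arguably lighter argument than the paper's, which works with the unnormalized density and limits numerator and denominator of $\mathbb{E}[h]$ separately. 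To make this work you must carry out the constant bookkeeping so that the a.e.\ limit of the normalized densities integrates to one, and you must abandon the dominated-convergence/boundary-layer half of your plan, which as written does not go through.
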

\begin{remark} We mention that Proposition \ref{prop_cont_limit} can be proved using the arguments in \cite[Section 2]{GS} for $\theta$-Gibbs measures. The proof we present below is more direct and contains more details.
\end{remark}
\begin{proof} The proof we present here is similar to the one for \cite[Proposition 7.3]{DK2020}. To ease the notation we suppress the dependence on $L$ of various quantities, writing for example $\left( \ell^{N}, \dots, \ell^{k} \right)$ in place of $\left( \ell^{L,N}, \dots, \ell^{L,k} \right)$. Furthermore, by performing a simple horizontal shift, we see that it suffices to prove the proposition when $a_- = 0$, which we assume in the sequel. Finally, since the measure in (\ref{S1PDef}) for $k = m$ with $m \in \llbracket 1, N \rrbracket$ is precisely the projection of the measure in (\ref{S1PDef}) for $k = 1$ to $(\ell^N, \dots, \ell^m)$ (this is Lemma \ref{ProjLemma}), and the measure in (\ref{S4CornersProj2}) is precisely the projection of (\ref{S4Corners1}) to $(y^N, \dots, y^k)$, we see that it suffices to prove the proposition when $k = 1$.

 For clarity we split the proof into four steps.\\

{\bf \raggedleft Step 1.} We begin by introducing a bit of useful notation. Throughout the proof we write as usual $ \ell_i^j  = \lambda_i^j  - i \cdot \theta$. For $(\ell^N, \dots, \ell^1) \in \XXX$ we define
\begin{equation}\label{XOP1}
\begin{split}
&H(\ell^N, \dots, \ell^1) = H^t(\ell^N) \cdot \prod_{j = 1}^{N-1}  \I(\ell^{j+1}, \ell^j).
\end{split}
\end{equation}
We define the function $f_L: \mathbb{R}^N \times \cdots \times \mathbb{R} \rightarrow [0, \infty)$ by
\begin{equation}\label{XOP2}
\begin{split}
&f_L(y^N,\dots, y^1) =  L^{N(N-1)/2  - \theta \cdot N(N-1)} \cdot H(\ell^N, \dots, \ell^1) \cdot \prod_{i = 1}^N e^{-N \theta V(\ell^N_i/L)} \\
& \mbox{ if $\lambda^j_i \leq Ly^{j}_{j- i +1} < \lambda^j_i + 1$ for $j = 1 , \dots, N$ and $i = 1, \dots, j$  } \\
&\mbox{ with $\lambda^i \in \Lambda_i^M$ for $i = 1, \dots, N$ and $ \lambda^1 \preceq \cdots \preceq \lambda^N$,  and $f_L(x,y) = 0 $ otherwise}.
\end{split}
\end{equation}
If $y = (y^N, \dots, y^1) \in \mathbb{R}^N \times \cdots \times \mathbb{R}$, we let $ \phi_L(y) $ denote the vector $(z^N, \dots, z^1) \in \mathbb{R}^N \times \cdots \times \mathbb{R}$ such that $z_i^j = L^{-1} \cdot \lfloor y_{i}^j \cdot L \rfloor$. We also write $dy^j$ for the Lebesgue measure on $\mathbb{R}^j$ and $dy$ for the Lebesgue measure on $\mathbb{R}^N \times \cdots \times \mathbb{R}$.

We claim that if $h: \mathbb{R}^N \times \cdots \times \mathbb{R} \rightarrow \mathbb{R}$ is any bounded continuous function then
\begin{equation}\label{XOP3}
\begin{split}
&\lim_{L \rightarrow \infty} \int_{\mathbb{R}^N \times \cdots \times \mathbb{R}} f_L(y) h \left( \phi_L(y) \right) dy = \int_{\mathbb{R}^N \times \cdots \times \mathbb{R}} g_N(y) h \left(y \right) \cdot {\bf 1}_{\mathcal{G}_N}  {\bf 1}\{ a_- < y^N_1, y^N_N < a_+\} dy,
\end{split}
\end{equation}
where $g_N$ is as in (\ref{S4Corners1}). We will prove (\ref{XOP3}) in the steps below. Here we assume its validity and conclude the proof of the proposition.\\

We observe by definition that if $h_1(x) \equiv 1$ we have for each $L \in \mathbb{N}$ and bounded continuous $h: \mathbb{R}^N \times \cdots \times \mathbb{R} \rightarrow \mathbb{R}$ that
\begin{equation*}
\begin{split}
&\mathbb{E} \left[ h(L^{-1} \lambda_{j - i + 1}^j: j = 1,\dots, N, i = 1, \dots, j) \right] = \frac{ \int_{\mathbb{R}^N \times \cdots \times \mathbb{R}} f_L(y) h \left( \phi_L(y) \right) dy}{ \int_{\mathbb{R}^N \times \cdots \times \mathbb{R}} f_L(y) h_1 \left( \phi_L(y) \right) dy}.
\end{split}
\end{equation*}
Taking $L \rightarrow \infty$ in the last identity and using (\ref{XOP3}), and $h_1(x) \equiv 1$ we get 
\begin{equation*}
\begin{split}
&\lim_{L \rightarrow \infty} \mathbb{E} \left[ h(L^{-1} \lambda_{j - i + 1}^j: j = 1,\dots, N, i = 1, \dots, j) \right]  \\
&= \frac{ \int_{\mathbb{R}^N \times \cdots \times \mathbb{R}} g_N(y) h \left(y \right) {\bf 1}_{\mathcal{G}_N}  {\bf 1}\{ a_- < y^N_1, y^N_N < a_+\} dy}{ \int_{\mathbb{R}^N \times \cdots \times \mathbb{R}} g_N(y) {\bf 1}_{\mathcal{G}_N}  {\bf 1}\{ a_- < y^N_1, y^N_N < a_+\} dy} =  \int_{\mathbb{R}^N \times \cdots \times \mathbb{R}} f(y) h \left(y \right) dy,
\end{split}
\end{equation*}
where in the last equality we used (\ref{S4Corners1}). The last equation implies that $(L^{-1} \lambda_{j - i + 1}^j: j = 1,\dots, N, i = 1, \dots, j)$ converge weakly to $(Y_{i}^j: j = 1,\dots, N, i = 1, \dots, j)$, which implies the statement of the proposition.\\

{\bf \raggedleft Step 2.} We claim that there are functions $h_L: \mathbb{R}^N \times \cdots \times \mathbb{R} \rightarrow [0,\infty)$ for $L \in \mathbb{N} \cup \{\infty \}$ such that
\begin{enumerate}
\item $f_L(y) \leq h_L(y)$ for all $L \in \mathbb{N}$ and $y \in \mathbb{R}^N \times \cdots \times \mathbb{R}$;
\item $\lim_{L \rightarrow \infty} h_L(y) = h_{\infty}(y)$ for Lebesgue a.e. $y \in \mathbb{R}^N \times \cdots \times \mathbb{R}$;
\item $\lim_{L \rightarrow \infty} \int_{\mathbb{R}^N \times \cdots \times \mathbb{R}} h_L(y)dy = \int_{\mathbb{R}^N \times \cdots \times \mathbb{R}} h_\infty(y)dy \in [0, \infty)$.
\end{enumerate}
We prove the above claim in the steps below. Here we assume its validity and prove (\ref{XOP3}).\\

From \cite[Equation (7.14)]{DK2020} we have for any $x , \theta > 0$ that
\begin{equation}\label{Sandwich}
\frac{\Gamma(x+ \theta)}{\Gamma(x)} = x^{\theta} \cdot \exp(O(x^{-1})),
\end{equation}
where the constant in the big $O$ notation can be taken to be $\max( \theta, \theta^2)$.  Using $\Gamma(z+1) = z \Gamma(z)$ and (\ref{Sandwich}) we conclude that for $j \in \llbracket 1, N  -1\rrbracket$ and $1 \leq p < q \leq j+1$
\begin{equation}\label{mixed1}
\begin{split}
\frac{\Gamma(\ell^{j}_p -  \ell^{j+1}_q)}{ \Gamma(\ell^{j}_p - \ell^{j+1}_q + 1 - \theta)}  &=    \frac{1}{\ell^{j}_p -  \ell^{j+1}_q} \cdot  \frac{\Gamma(\ell^{j}_p -  \ell^{j+1}_q + 1)}{ \Gamma(\ell^{j}_p - \ell^{j+1}_q + 1 - \theta)}   \\
& = \frac{(\ell^{j}_p -  \ell^{j+1}_q + 1 - \theta)^{\theta}}{\ell^{j}_p -  \ell^{j+1}_q}  \exp (O(|\ell^{j}_p -  \ell^{j+1}_q + 1 - \theta|^{-1} )) .
\end{split}
\end{equation}
Analogous considerations show that if $j \in \llbracket 1, N-1\rrbracket$ and $1 \leq p \leq q\leq j$ we have
\begin{equation}\label{mixed2}
\begin{split}
\frac{\Gamma(\ell^{j+1}_p - \ell^{j}_q + \theta)}{\Gamma(\ell^{j+1}_p - \ell^{j}_q + 1)} & =   \frac{1}{\ell^{j+1}_p - \ell^{j}_q + \theta} \cdot    \frac{\Gamma(\ell^{j+1}_p - \ell^{j}_q + \theta + 1)}{\Gamma(\ell^{j+1}_p - \ell^{j}_q + 1)}  \\
& = \frac{(\ell^{j+1}_p - \ell^{j}_q+ 1)^{\theta }}{\ell^{j+1}_p - \ell^{j}_q+ \theta} \cdot \exp (O(|\ell^{j+1}_p - \ell^{j}_q  + 1|^{-1} )),
\end{split}
\end{equation}
and also for $j \in \llbracket 2, N - 1\rrbracket$ and $1 \leq p < q \leq j$ we have
\begin{equation}\label{mixed3}
\begin{split}
&\frac{\Gamma(\ell^{j}_p - \ell^{j}_q + 1 - \theta)}{\Gamma(\ell^{j}_p - \ell^{j}_q + \theta)} =  \frac{\Gamma(\ell^{j}_p - \ell^{j}_q + 1 - \theta)}{\Gamma(\ell^{j}_p -  \ell^{j}_q + 1)} \cdot \frac{\Gamma(\ell^{j}_p -  \ell^{j}_q + 1)}{\Gamma(\ell^{j}_p - \ell^{j}_q + \theta)}    \\
& = (\ell^{j}_p -  \ell^{j}_q + 1 - \theta)^{-\theta} \cdot (\ell_p^j - \ell_q^j)^{1- \theta} \cdot \exp ( O(|\ell^{j}_p -  \ell^{j}_q + 1 - \theta|^{-1} ) + O( |\ell^{j}_p -  \ell^{j}_q|^{-1} )).
\end{split}
\end{equation}

In view of (\ref{mixed1}), (\ref{mixed2}), (\ref{mixed3}) and the continuity of $V$ and $h$ we have for every $y \in E_N:= \mathcal{G}_N \cap \{ a_- < y^N_1, y^N_N < a_+\}$ that 
$$\lim_{L \rightarrow \infty} f_L(y) h \left( \phi_L(y) \right) = g_N(y) h(y).$$
Also from the definition of $f_L(y)$ we have for every $y \not \in \bar{E}_N$ (the closure of $E_N$) 
$$\lim_{L \rightarrow \infty} f_L(y) h \left( \phi_L(y) \right) = 0.$$
Since the Lebesgue measure of $\partial E_N$ is zero we conclude that for Lebesgue a.e. $y \in \mathbb{R}^N \times \cdots \times \mathbb{R}$ 
\begin{equation*}
\lim_{L \rightarrow \infty} f_L(y) h \left( \phi_L(y) \right) = g_N(y) h \left(y \right) \cdot {\bf 1}_{\mathcal{G}_N}  {\bf 1}\{ a_- < y^N_1, y^N_N < a_+\} dy.
\end{equation*}
The last statement and the three conditions in the beginning of the step imply (\ref{XOP3}) by the Generalized dominated convergence theorem (see \cite[Theorem 4.17]{Royden}) with dominating functions $h_L(y) \cdot \| h \|_{\infty}$. \\

{\bf \raggedleft Step 3.}  In this and the next step we construct functions $h_L$ that satisfy the three conditions in the beginning of Step 2. Our choice for $h_L$ depends on whether $\theta \in (0,1]$ or $\theta \in [1, \infty)$ and in this step we focus on the case when $\theta \in [1, \infty)$. When $\theta \in [1, \infty)$ we will show that $h_L$ can be taken to be the same constant multiple of an indicator function of a compact set.

We observe that by rearranging the terms in the product on the right side of (\ref{XOP1}) and using $\Gamma(z+1) =  z\Gamma(z)$ we have
\begin{equation}\label{XOP5}
\begin{split}
&H(\ell^N, \dots, \ell^1) = I_1 \cdot I_2 \cdot I_3, \mbox{ where }I_1 = \hspace{-2mm} \prod_{1 \leq p < q \leq N} \frac{(\ell_p^N - \ell_q^N) \Gamma(\ell_p^{N-1} - \ell_q^N) \Gamma(\ell_p^{N} - \ell_{q-1}^{N-1} + \theta)}{\Gamma(\ell_p^{N-1} - \ell_q^N + 1 - \theta) \Gamma(\ell_p^{N} - \ell_{q-1}^{N-1} + 1)} ,\\
&I_2 = \prod_{j = 2}^{N-1} \left[ \prod_{1 \leq p < q \leq j} \frac{\Gamma(\ell_p^j - \ell_q^j + 1- \theta)\Gamma(\ell_p^{j-1} - \ell_q^j )}{\Gamma(\ell_p^j - \ell_q^j )\Gamma(\ell_p^{j-1} - \ell_q^j + 1- \theta)} \right], \\
&I_3 = \prod_{j = 2}^{N-1} \left[ \prod_{1 \leq p < q \leq j} \frac{\Gamma(\ell_p^j - \ell_q^j + 1)\Gamma(\ell_p^{j} - \ell_{q-1}^{j-1} + \theta )}{\Gamma(\ell_p^j - \ell_q^j + \theta)\Gamma(\ell_p^{j} - \ell_{q-1}^{j-1} + 1)} \right].
\end{split}
\end{equation}

From (\ref{mixed1}), (\ref{mixed2}) and the fact that $\theta \geq 1$ we have for some $C_1 > 0$, depending on $\theta$ and $N$,
\begin{equation}\label{XOP6}
I_1 \leq C_1 \prod_{1 \leq p < q \leq N}(\ell_p^N - \ell_q^N) (\ell_p^{N-1} - \ell_q^N)^{\theta -1} (\ell_p^{N} - \ell_{q-1}^{N-1} + \theta)^{\theta - 1}.
\end{equation}
We next prove that 
\begin{equation}\label{XOP7}
I_2 \leq 1 \mbox{ and } I_3 \leq 1.
\end{equation}
To prove (\ref{XOP7}) we show that each term in the products defining $I_2$, $I_3$ in (\ref{XOP5}) is at most $1$.\\

We will use below that $\log \Gamma(z)$ is a convex function on $(0, \infty)$, since $\frac{d^2}{dz^2} \log \Gamma(z) = \sum_{n = 0}^{\infty} \frac{1}{(n+z)^2}$, and so for any $y \geq x > 0$ and $z \geq 0$
\begin{equation}\label{convexFun}
\log\Gamma (x + z) - \log\Gamma(x) \leq \log\Gamma (y+z) -\log\Gamma (y),
\end{equation}
which can be deduced from \cite[Exercise 4.23]{Rudin}. 

Let us fix $j \in \llbracket 2, N-2 \rrbracket$ and $1 \leq p < q \leq j$. We also write $a = \ell_p^j - \ell_p^{j-1}$  and $\Delta = \ell_p^{j-1} - \ell_q^j$  (note $a \geq 0$ and $\Delta \in[\theta, \infty)$ by interlacing). We then have
$$\frac{\Gamma(\ell_p^j - \ell_q^j + 1- \theta)\Gamma(\ell_p^{j-1} - \ell_q^j )}{\Gamma(\ell_p^j - \ell_q^j )\Gamma(\ell_p^{j-1} - \ell_q^j + 1- \theta)} =  \frac{\Gamma(\Delta + a + 1- \theta)\Gamma(\Delta )}{\Gamma(\Delta + a )\Gamma(\Delta + 1- \theta)} \leq 1,$$
where the last inequality used (\ref{convexFun}) with $x = \Delta + 1 - \theta$, $y = \Delta + 1 - \theta + a$ and $z = \theta - 1$. The last inequality being true for any $j \in \llbracket 2, N-2 \rrbracket$ and $1 \leq p < q \leq j$ gives the first inequality in (\ref{XOP7}).

Similarly, we write $a = \ell_{q-1}^{j-1} - \ell_q^j - \theta$, $\Delta = \ell_p^j - \ell_q^j$  (note $\Delta - \theta \geq a \geq 0$ and $\Delta \in[\theta, \infty)$ by interlacing) and then we have
$$ \frac{\Gamma(\ell_p^j - \ell_q^j + 1)\Gamma(\ell_p^{j} - \ell_{q-1}^{j-1} + \theta )}{\Gamma(\ell_p^j - \ell_q^j + \theta)\Gamma(\ell_p^{j} - \ell_{q-1}^{j-1} + 1)} \leq  \frac{\Gamma(\Delta+ 1)\Gamma( \Delta - a )}{\Gamma(\Delta+ \theta)\Gamma(\Delta - a- \theta + 1)} \leq 1$$
where the last inequality used (\ref{convexFun}) with $x = \Delta - a - \theta + 1$, $y = \Delta + 1$ and $z = \theta - 1$. The last inequality being true for any $j \in \llbracket 2, N-2 \rrbracket$ and $1 \leq p < q \leq j$ gives the second inequality in (\ref{XOP7}).\\

Combining (\ref{XOP6}) and (\ref{XOP7}) with the fact that $\ell_i^j + i \theta \in [0,  La_+]$ we conclude that there is a constant $C_2$, depending on $\theta, N$ and $a_+$, such that for $(\ell^N, \dots, \ell^1) \in \XXX$ we have
$$L^{N(N-1)/2  - \theta \cdot N(N-1)} \cdot H(\ell^N, \dots, \ell^1) \leq C_2.$$
The fact that $V$ is continuous implies that we can find $C_3 > 0$, depending on $\theta, N$ and $V$, such that for $(\ell^N, \dots, \ell^1) \in \XXX$ we have
\begin{equation}\label{S4C3}
\prod_{i = 1}^N e^{-N \theta V(\ell^N_i/L)} \leq C_3.
\end{equation}
The last two statements imply that
$$h_L(y) = C_2 \cdot C_3 \cdot {\bf 1}\{ y_i^j \in [-1, a_+ + 1]: j = 1, \dots, N, \hspace{2mm} i = 1, \dots, j\}$$
satisfy the three conditions in Step 2. We mention that since the functions $h_L$ are compactly supported and constant, the Generalized dominated convergence theorem in Step 2 is in fact nothing but the bounded convergence theorem. This will change in the $\theta \in (0,1]$ case we consider next.\\

{\bf \raggedleft Step 4.} In this last step we construct functions $h_L$ that satisfy the three conditions as in the beginning of Step 2 when $\theta \in (0, 1]$.

For $\lambda^i \in \Lambda^{\infty}_i$ (recall this was defined in Section \ref{Section4.2}) such that $\lambda^N \succeq \cdots \succeq \lambda^1$ we define
\begin{equation}
\begin{split}
&F_L(\lambda^N, \dots, \lambda^1) = \prod_{i = 2}^N J_{\lambda^i / \lambda^{i-1}}(1) \cdot \tilde{J}_{\lambda^N} (1^N) \cdot \prod_{i = 1}^N e^{-\lambda_i^N/L}.
\end{split}
\end{equation}
It follows from (\ref{SkewJackPoly}) and (\ref{Djackpoly2}) that for $(\ell^N, \dots, \ell^1) \in \XXX$ we have
\begin{equation*}
\begin{split}
&\frac{H(\ell^N, \dots, \ell^1) }{F_L(\lambda^N, \dots, \lambda^1)} = \prod_{i = 1}^N \frac{\Gamma(i\theta) e^{\lambda_i^N/L} \Gamma(\ell_i^N + \theta N + 1)}{\Gamma(\ell_i^N + \theta N + \theta)} = \prod_{i = 1}^N  \frac{\Gamma(i\theta) e^{\lambda_i^N/L}(\ell_i^N + \theta N + \theta)  \Gamma(\ell_i^N + \theta N + 1) }{\Gamma(\ell_i^N + \theta N + \theta + 1)}\\
&  = \prod_{i = 1}^N  \Gamma(i\theta) e^{\lambda_i^N/L}(\ell_i^N + \theta N + \theta) (\ell_i^N + \theta N + 1)^{-\theta} \exp \left( O(| \ell_i^N + \theta N + 1|^{-1} \right),
\end{split}
\end{equation*}
where in the last line we used (\ref{Sandwich}). In particular, we see that there is a constant $A_1 > 0$ sufficiently large, depending on $N, \theta, a_+$, such that for all $L \in \mathbb{N}$ and $(\ell^N, \dots, \ell^1) \in \XXX$ we have
 \begin{equation}\label{XOP8}
\begin{split}
L^{N(N-1)/2  - \theta \cdot N(N-1)} \cdot  H(\ell^N, \dots, \ell^1) \leq \hspace{2mm} &A_1 \cdot L^{N(N+1)/2  - \theta \cdot N^2} \cdot  F_L(\lambda^N, \dots, \lambda^1).
\end{split}
\end{equation}
We mention that in the last inequality we used $\theta \in (0, 1]$ and $\ell_i^N + i \theta \in [0, La_+ ]$. We also note that by the continuity of $V$ there exists $C_3 > 0$, depending on $V, \theta$ and $N$, such that (\ref{S4C3}) holds.

We define the function $h_L: \mathbb{R}^N \times \cdots \times \mathbb{R} \rightarrow [0, \infty)$ by
\begin{equation}\label{XOP9}
\begin{split}
&h_L(y^N,\dots, y^1) =  L^{N(N-1)/2  - \theta \cdot N(N-1)} \cdot A_1 \cdot C_3 \cdot L^{N(N+1)/2  - \theta \cdot N^2} \cdot  F_L(\lambda^N, \dots, \lambda^1), \\
& \mbox{ if $\lambda^j_i \leq Ly^{j}_{j- i +1} < \lambda^j_i + 1$ for $j = 1 , \dots, N$ and $i = 1, \dots, j$  } \\
&\mbox{ with $\lambda^i \in \Lambda_i^\infty$ for $i = 1, \dots, N$ and $ \lambda^1 \preceq \cdots \preceq \lambda^N$,  and $h_L(x,y) = 0 $ otherwise}.
\end{split}
\end{equation}
This specifies our choice of $h_L$ and we proceed to show that it satisfies the three conditions in Step 2.\\

Condition (1) is trivially satisfied in view of equations (\ref{S4C3}) and (\ref{XOP8}). We next note from (\ref{Djackpoly2}), (\ref{Sandwich}) and (\ref{mixed3}) that if $y \in \mathcal{G}$ is such that $y_1^N > 0$ and $\lambda_i = \lfloor L y^N_{N-i+1} \rfloor $ for $i = 1, \dots, N$ 
\begin{equation*}
\begin{split}
&\lim_{L \rightarrow \infty} L^{-\theta N(N+1)/2 +  N} \tilde{J}_\lambda(1^{N}) = \prod_{i = 1}^{N} \frac{1}{\Gamma( i \theta)} \prod_{1 \leq i < j \leq N} (y_j^N - y_i^N)^\theta \prod_{i = 1}^{N} (y_i^N)^{\theta - 1}.
\end{split}
\end{equation*}

Combining the latter with (\ref{SkewJackPoly}), (\ref{mixed1}), (\ref{mixed2}) and (\ref{mixed3}) we conclude that if $y \in E_N:= \{ y \in \mathcal{G}: y_1^N > 0 \}$ we have
\begin{equation*}
\begin{split}
&\lim_{ L \rightarrow \infty} h_L(y) =  A_1 \cdot C_3  \cdot \prod_{i = 1}^{N} \frac{1}{\Gamma( i \theta)} \cdot g_N(y) \prod_{i = 1}^{N} (y_i^N)^{\theta - 1} e^{-y_i^N}.
\end{split}
\end{equation*}
On the other hand, for $y \not \in \bar{E}_N$ (the closure of $E_N$) we have
\begin{equation*}
\begin{split}
&\lim_{ L \rightarrow \infty} h_L(y) = 0.
\end{split}
\end{equation*}
Since the Lebesgue measure of $\partial E_N$ is zero we conclude that for Lebesgue a.e. $y \in \mathbb{R}^N \times \cdots \times \mathbb{R}$
\begin{equation}\label{XOP10}
\begin{split}
&\lim_{ L \rightarrow \infty} h_L(y) = {\bf 1}_{E_N} \cdot A_1 \cdot C_3  \cdot \prod_{i = 1}^{N} \frac{1}{\Gamma( i \theta)} \cdot g_N(y) \prod_{i = 1}^{N} (y_i^N)^{\theta - 1} e^{-y_i^N}.
\end{split}
\end{equation}
We denote the right side of (\ref{XOP10}) by $h_{\infty}(y)$ and then (\ref{XOP10}) proves condition (2) in Step 2.\\

We are left with proving condition (3). From (\ref{S7Branchv2}) and (\ref{CauchyId}) we have for each $L \in \mathbb{N}$ that 
$$\int_{\mathbb{R}^N \times \cdots \times \mathbb{R}} h_L(y)dy = A_1 \cdot C_3 \cdot L^{- \theta N^2}   (1 - e^{-1/L})^{-\theta N^2},$$
which implies 
\begin{equation*}
\begin{split}
&\lim_{ L \rightarrow \infty} \int_{\mathbb{R}^N \times \cdots \times \mathbb{R}} h_L(y)dy = A_1 \cdot C_3.
\end{split}
\end{equation*}
On the other hand, we have from (\ref{S4CornersProj}) that 
\begin{equation*}
\begin{split}
&\int_{\mathbb{R}^N \times \cdots \times \mathbb{R}} \hspace{-4mm} h_{\infty}(y) dy = \frac{A_1 C_3 \Gamma(\theta)^N}{ \prod_{i = 1}^N\Gamma(i\theta)^2 } \cdot \int_{\mathbb{R}^N} {\bf 1 }\{ 0 < y_1 < \cdots < y_N \}\prod_{1 \leq i < j \leq N} (y_j - y_i)^{2\theta}\prod_{i = 1}^Ny_i^{\theta - 1} e^{-y_i}  \\
& = \frac{A_1 C_3 \Gamma(\theta)^N}{ N! \cdot \prod_{i = 1}^N\Gamma(i\theta)^2 }  \int_{[0, \infty)^N} \prod_{1 \leq i < j \leq N} |y_j - y_i|^{2\theta}\prod_{i = 1}^Ny_i^{\theta - 1} e^{-y_i} = A_1 \cdot C_3,
\end{split}
\end{equation*}
where in the last equality we used \cite[Equation (2.5.10)]{agz} with $a= c = \theta$. The last two equations imply the third condition in Step 2, which concludes the proof of the proposition.
\end{proof}

%-------------------------------------------------------------------------------------------------------------------------------------------------------------------------------------------------
% Section 4.4
%
%-------------------------------------------------------------------------------------------------------------------------------------------------------------------------------------------------
\subsection{Proof of Theorem \ref{ManyLevelLoopThm_intro}}\label{Section4.4} We continue with the same notation as in the statement of the theorem and Section \ref{Section4.3} above. For clarity we split the proof into five steps.\\

{\bf \raggedleft Step 1.} In this step we utilize Lemma \ref{CrudeCumExp} to obtain a sequence of equations, indexed by $L$, whose $L \rightarrow \infty$ limit will give the statement of the theorem. 

Let $\left( \ell^{L,N}, \dots, \ell^{L,k} \right)$ be a sequence of random vectors, whose probability distribution is $\mathbb{P}_L$ as in Definition \ref{S4LDef}. As in Section \ref{Section4.3} above we will drop the dependence on $L$ from the notation and simply write $\left( \ell^{N}, \dots, \ell^{k} \right)$. We assume that $K$ is a compact set inside $U$ (recall $U$ was given in the statement of the theorem), which lies outside of the contour $\Gamma$, $v \in K$ and $L$ is sufficiently large so that $v_i^j, v_{i}^j + L^{-1} \in K$ for all $j \in \llbracket k, N \rrbracket$ and $i \in \llbracket 1 , m_j \rrbracket$. We then note that $\mathbb{P}_L$ satisfies the conditions of Lemma \ref{CrudeCumExp} with $\mathcal{M} = L\cdot (U - a_-)$ (the open set obtained by shifting $U$ by $-a_-$ and rescaling by $L$), $\Phi_{L,N}^-(x) = \exp \left( N \theta [  V(a_- + xL^{-1}) - V(a_- + xL^{-1} - L^{-1})] \right) \mbox{ and } \Phi_{L,N}^+(x)  =1.$

It follows from (\ref{Exp1V3}) that when $\theta \neq 1$ we have
\begin{equation}\label{ZOP1}
\begin{split}
&0= I_L^1 + I_L^2  + I_L^3 , \mbox{ where }
\end{split}
\end{equation}
\begin{equation}\label{ZOP2}
\begin{split}
&I_L^1 = \oint_{\Gamma}\frac{dz(Lz + N\theta - La_-)(Lz - M_L -1 + \theta - La_-) e^{N\theta[ V(z) - V(z - L^{-1})]}  }{2 \pi \i  \theta (z - v)}  \\
&  \times M\left( \prod_{p = 1}^N\frac{Lz- \tilde{\ell}^N_p -\theta }{Lz - \tilde{\ell}^N_p}; \llbracket 1, m_k \rrbracket, \dots, \llbracket 1, m_N \rrbracket \right),
\end{split}
\end{equation}
\begin{equation}\label{ZOP3}
\begin{split}
&I_L^2 =  \sum_{\substack{F_r \subseteq \llbracket 1, m_r \rrbracket \\  r = k, \dots, N }} \oint_{\Gamma} \frac{dz(Lz + N\theta - La_-)(Lz - M_L -1 + \theta - La_-)  }{2 \pi \i \theta (z - v)}   \\
& \times \prod_{i = k}^N \prod_{f \in F_i^c}\frac{1}{L(v_f^i-z)(v_f^i - z + L^{-1})}  \cdot  M\left( \prod_{p = 1}^{k}\frac{Lz- \tilde{\ell}^{k}_p + \theta - 1}{Lz - \tilde{\ell}^{k}_p - 1 }; F_k, \dots, F_N \right),
\end{split}
\end{equation}
\begin{equation}\label{ZOP4}
\begin{split}
& I_L^3  =  \sum_{\substack{F_r \subseteq \llbracket 1, m_r \rrbracket \\  r = k, \dots, N }}   \sum\limits_{j=k+1}^{N}  \prod_{i = k}^{j-1} {\bf 1 } \{F_i = \llbracket 1, m_i \rrbracket\} \oint_{\Gamma} dz  \frac{(Lz + N\theta - La_-)(Lz - M_L -1 + \theta - La_-)  }{2 \pi \i  (z - v)}  \\
&\times  \prod_{i = j}^N \prod_{f \in F_i^c}\frac{1}{L(v_f^i-z)(v_f^i - z + L^{-1})} \cdot  M \left( \Pi^{\theta, j}_1(Lz - La_-); F_k, \dots, F_N \right),
\end{split}
\end{equation}
where we recall that $\Pi^{\theta, j}_1$ is as in (\ref{S42E2}). In equations (\ref{ZOP2}), (\ref{ZOP3}) and (\ref{ZOP4}) we have written $M(\xi; F_k, \dots, F_N)$ in place of the joint cumulant $M(\xi; \{ G_L^j(v_i^j - a_-): j \in \{k, \dots, N\}, i \in F_j \})$ (with respect to the measure $\mathbb{P}_L$), where we recall that this notation was introduced just before Lemma \ref{LemCum1} and $G_L^j$ was defined in (\ref{S3DefG}). Also, we have $F_i^c= \llbracket 1, m_i \rrbracket \setminus F_i$. We mention that when we applied (\ref{Exp1V3}) we multiplied both sides by $L \theta^{-1}$, performed a change of variables $z \rightarrow z + a_-$ and set $\tilde{\ell}_i^j = \ell_i^j - L a_-$ to ease the notation.\\

{\bf \raggedleft Step 2.} In this step we summarize the large $L$ expansion of the various expressions in equations (\ref{ZOP2}), (\ref{ZOP3}) and (\ref{ZOP4}). In the remainder of the proof all constants, including those in big $O$ notations, will depend on $N, k, \theta, V, a_-, a_+, K$ and $\Gamma$. We will not mention this further. Below we use $\xi_L(z)$ to denote a generic random analytic function, which is $\mathbb{P}_L$-almost surely $O(1)$, and whose meaning will change from line to line.

We first observe that for each $x,y \in [-\theta - 1, \theta + 1]$, $z \in \Gamma$ and $j \in \llbracket k, N \rrbracket$ we have
\begin{equation*}
\begin{split}
& \prod_{ p = 1}^j   \frac{Lz - \tilde{\ell}_p^j  + x }{Lz - \tilde{\ell}_p^j + y} = \exp  \left(\sum_{p = 1}^j \log \left(  1 +  \frac{L^{-1} (x - y)}{z - \tilde{\ell}^j_p/L + y/L } \right) \right)  = \\
& \exp \left(  \frac{1}{L} \sum_{p = 1}^j \frac{x - y }{z - \tilde{\ell}^j_p/L + y/L }    - \frac{1}{2L^2}  \sum_{p = 1}^j \frac{(x - y)^2}{(z - \tilde{\ell}^j_p/L + y/L)^2}  +  \frac{\xi_L(z)}{L^3}  \right).
\end{split}
\end{equation*}
Recalling the definition of $G_L^j(z)$ from (\ref{S3DefG}) and setting $\tilde{G}_L^j(z) = {G}_L^j(z - a_-)$, we have
$$\tilde{G}_L^j(z) = \sum_{i = 1}^{j} \frac{1}{z - \tilde{\ell}_i^j/L}, \hspace{2mm} \partial_z \tilde{G}_L^j(z) = - \sum_{i = 1}^{j} \frac{1}{(z - \tilde{\ell}_i^N/L)^2}.$$
Combining the last two equations we get
\begin{equation*}
\begin{split}
&\prod_{ p = 1}^j   \frac{Lz - \tilde{\ell}_p^j  + x }{Lz - \tilde{\ell}_p^j + y} = \exp \left( \frac{x- y}{L}  \tilde{G}^j_L(z) + \frac{x^2 - y^2}{2L^2}  \partial_z \tilde{G}_L^j(z) + \frac{\xi_L(z)}{L^3}  \right).
\end{split}
\end{equation*}
Taylor expanding the above exponential function we obtain
\begin{equation}\label{YOP1}
\begin{split}
&\prod_{ p = 1}^j   \frac{Lz - \tilde{\ell}_p^j  + x }{Lz - \tilde{\ell}_p^j + y} =  1 + \frac{x- y}{L}  \tilde{G}^j_L(z) + \frac{x^2 - y^2}{2L^2}  \partial_z \tilde{G}_L^j(z) + \frac{(x-y)^2}{2L^2}[ \tilde{G}^j_L(z) ]^2 + \frac{\xi_L(z)}{L^3}.
\end{split}
\end{equation}

We also record the following expansions
\begin{equation}\label{YOP2}
e^{N\theta[V(z) - V(z - L^{-1}) ]} = 1 +  \frac{N \theta \partial_z V(z)}{L}  + O(L^{-2}),
\end{equation}
\begin{equation}\label{YOP3}
\frac{(Lz + N\theta - La_-)(Lz - M_L -1 + \theta - La_-) }{L^2} = (z-a_-)(z- a_+) + O(L^{-1}).
\end{equation}

{\bf \raggedleft Step 3.} We claim that 
\begin{equation}\label{ZOP5}
\begin{split}
&I^1_L =   I_L^{1,1} + I_L^{1,2} + O(L^{-1}) \mbox{, where } 
\end{split}
\end{equation}
\begin{equation*}
\begin{split}
I_L^{1,1} = &\hspace{2mm}   -   \oint_{\Gamma}\frac{dz(Lz + N\theta - La_-)(Lz - M_L -1 + \theta - La_-)e^{N\theta[ V(z) - V(z - L^{-1})]}   }{2 \pi \i  L (z - v)} \\
& \times M\left(   \tilde{G}^N_L(z)  ; \llbracket 1, m_k \rrbracket, \dots, \llbracket 1, m_N \rrbracket \right)
\end{split}
\end{equation*}
\begin{equation*}
\begin{split}
& I_L^{1,2} = \theta \oint_{\Gamma}\frac{dz(z-a_-)(z- a_+)  M\left(  \partial_z \tilde{G}_L^N(z) + [\tilde{G}^N_L(z) ]^2  ; \llbracket 1, m_k \rrbracket, \dots, \llbracket 1, m_N \rrbracket \right)   }{4 \pi \i  (z - v)};
\end{split}
\end{equation*}
\begin{equation}\label{ZOP6}
\begin{split}
& I^2_L =    I_L^{2,1} + I_L^{2,2} +  I_L^{2,3} + O(L^{-1})\mbox{, where }
\end{split}
\end{equation}
\begin{equation*}
\begin{split}
 I_L^{2,1} = \hspace{2mm} & \oint_{\Gamma} \frac{dz(Lz + N\theta - La_-)(Lz - M_L -1 + \theta - La_-) }{2 \pi \i  L (z - v)  }   M\left(  \tilde{G}^k_L(z); \llbracket 1, m_k \rrbracket, \dots, \llbracket 1, m_N \rrbracket   \right), 
\end{split}
\end{equation*}
\begin{equation*}
\begin{split}
&  I_L^{2,2} =  \oint_{\Gamma} \frac{dz(z-a_-)(z- a_+) M\left(   (\theta -2 ) \partial_z \tilde{G}_L^k(z) + \theta[ \tilde{G}^k_L(z) ]^2 ; \llbracket 1, m_k \rrbracket, \dots, \llbracket 1, m_N \rrbracket   \right) }{4 \pi \i  (z - v)  } 
\end{split}
\end{equation*}
\begin{equation*}
\begin{split}
 I_L^{2,3} = \hspace{2mm} &   \sum_{r = k}^N \sum_{f \in \llbracket 1, m_r \rrbracket }\oint_{\Gamma} \frac{dz(z-a_-)(z- a_+)  }{2 \pi \i  (z - v) (v_f^r-z)^2 }    \\
& \times M\left(   \tilde{G}^k_L(z); \llbracket 1, m_k \rrbracket, \dots, \llbracket 1, m_{r-1} \rrbracket, \llbracket 1, m_{r} \rrbracket \setminus \{f \}, \llbracket 1, m_{r+1} \rrbracket, \dots , \llbracket 1, m_N \rrbracket   \right);
\end{split}
\end{equation*}
\begin{equation}\label{ZOP7}
 I^3_L =  I_L^{3,1} + I_L^{3,2} +  I_L^{3,3} + O(L^{-1})\mbox{, where } .
\end{equation}
\begin{equation*}
\begin{split}
 I_L^{3,1} = \hspace{2mm} &    \sum\limits_{j=k+1}^{N} \oint_{\Gamma} \frac{dz(Lz + N\theta - La_-)(Lz - M_L -1 + \theta - La_-) }{2 \pi \i L (z - v)}  \\
&  \times  M \left( \tilde{G}^j_L(z) -   \tilde{G}^{j-1}_L(z) ; \llbracket 1, m_k \rrbracket, \dots, \llbracket 1, m_N \rrbracket \right) ,
\end{split}
\end{equation*}
\begin{equation*}
\begin{split}
&  I_L^{3,2} =     \sum\limits_{j=k+1}^{N}  \oint_{\Gamma}  \frac{dz(z-a_-)(z- a_+)  }{4 \pi \i  (z - v)}  \\
&\times M \left( -(\theta + 1)  \partial_z \tilde{G}_L^j(z) +   (1-\theta)[ \tilde{G}^j_L(z) - \tilde{G}^{j-1}_L(z) ]^2   +  (1 - \theta)  \partial_z \tilde{G}_L^{j-1}(z) ;  \llbracket 1, m_k \rrbracket, \dots, \llbracket 1, m_N \rrbracket \right),
\end{split}
\end{equation*}
\begin{equation*}
\begin{split}
 I_L^{3,3} = \hspace{2mm}&   \sum_{r = k}^N \sum_{f \in \llbracket 1, m_r\rrbracket }   \sum\limits_{j=k + 1}^{r}   \oint_{\Gamma}\frac{ dz(z-a_-)(z- a_+)  }{2 \pi \i  (z - v)(v_f^r-z)^2 }   \\
& \times   M \left(   \tilde{G}^j_L(z) - \tilde{G}^{j-1}_L(z) ; \llbracket 1, m_k \rrbracket, \dots, \llbracket 1, m_{r-1} \rrbracket, \llbracket 1, m_{r} \rrbracket \setminus \{f \}, \llbracket 1, m_{r+1} \rrbracket, \dots , \llbracket 1, m_N \rrbracket  \right).
\end{split}
\end{equation*}
We establish (\ref{ZOP5}), (\ref{ZOP6}) and (\ref{ZOP7}) in the steps below. Here we assume their validity and conclude the proof of the lemma.\\

We first note that by telescoping we have 
\begin{equation}\label{YOP4}
\begin{split}
&I_L^{1,1} + I_L^{2,1} + I_L^{3,1} =   \oint_{\Gamma} \frac{dz(Lz + N\theta - La_-)(Lz - M_L -1 + \theta - La_-) }{2 \pi \i L (z - v)}   \\
&  \times [ 1  -e^{N\theta[ V(z) - V(z - L^{-1})]}]  \cdot  M \left(  \tilde{G}^N_L(z) ; \llbracket 1, m_k \rrbracket, \dots, \llbracket 1, m_N \rrbracket \right)   \\
& = \oint_{\Gamma} dz \frac{ N \theta (z-a_-)(z- a_+) [- \partial_z V(z)] }{2 \pi \i (z - v)} \cdot   M \left(  \tilde{G}^N_L(z) ; \llbracket 1, m_k \rrbracket, \dots, \llbracket 1, m_N \rrbracket \right) + O(L^{-1}),
\end{split}
\end{equation}
where in the last equality we used (\ref{YOP2}) and (\ref{YOP3}).

We also have
\begin{equation}\label{YOP5}
\begin{split}
I_L^{1,2}  +  I_L^{2,2}   +  I_L^{3,2} = \hspace{2mm} &\oint_{\Gamma}   \frac{dz(z-a_-)(z- a_+)  }{4 \pi \i  (z - v)}  M\Bigg( \theta \partial_z \tilde{G}_L^N(z) + \theta [\tilde{G}^N_L(z) ]^2 +  (\theta -2 ) \partial_z \tilde{G}_L^k(z)  \\
& +  \theta [ \tilde{G}^k_L(z) ]^2 +    \sum\limits_{j=k+1}^{N}-(\theta + 1)  \partial_z \tilde{G}_L^j(z) +    (1-\theta)[ \tilde{G}^j_L(z) \\
& - \tilde{G}^{j-1}_L(z) ]^2   +  (1 - \theta)  \partial_z \tilde{G}_L^{j-1}(z)  ; \llbracket 1, m_k \rrbracket, \dots, \llbracket 1, m_N \rrbracket \Bigg) .
\end{split}
\end{equation}

Finally, we have by telescoping
\begin{equation}\label{YOP6}
\begin{split}
&I_L^{2,3} + I_L^{3,3} =  \sum_{r = k}^N \sum_{f \in \llbracket 1, m_r\rrbracket }   \oint_{\Gamma}\frac{ dz(z-a_-)(z- a_+)  }{2 \pi \i  (z - v)(v_f^r-z)^2 }  \\
& \times   M \left(   \tilde{G}^r_L(z) ; \llbracket 1, m_k \rrbracket, \dots, \llbracket 1, m_{r-1} \rrbracket, \llbracket 1, m_{r} \rrbracket \setminus \{f\}, \llbracket 1, m_{r+1} \rrbracket, \dots , \llbracket 1, m_N \rrbracket  \right).
\end{split}
\end{equation}

We now study the $L \rightarrow \infty$ limit of (\ref{YOP4}), (\ref{YOP5}) and (\ref{YOP6}). From Proposition \ref{prop_cont_limit} we know that $\left( L^{-1} \tilde{\ell}_{j-i+1}^j: j \in \llbracket k, N \rrbracket, i \in \llbracket 1, j \rrbracket \right)$ converge weakly as $L \rightarrow \infty$ to $\left( Y_i^j: j \in \llbracket k, N \rrbracket, i \in \llbracket 1, j \rrbracket \right)$ as in the statement of the lemma. In particular, any joint moment of $\tilde{G}^j_L(z_i)$'s and $\partial_{z}  \tilde{G}^j(z_i)$'s converge to the corresponding joint moment of $\mathcal{G}^j(z_i)$'s and $\partial_{z}  \mathcal{G}^j(z_i)$'s. From (\ref{Mal2}) we conclude the same is true for the joint cumulants. Taking the $L \rightarrow \infty$  limits (\ref{YOP4}), (\ref{YOP5}) and (\ref{YOP6}), and using (\ref{ZOP1}) we obtain (\ref{TLRankmn_intro}), which completes the proof of the theorem.\\

{\bf \raggedleft Step 4.} In this step we prove (\ref{ZOP5}) and (\ref{ZOP6}). Using (\ref{ZOP2}), (\ref{YOP1}) with $x = -\theta, y = 0$ we get 
\begin{equation}\label{ZOP8}
\begin{split}
&I_L^1 = O(L^{-1}) +  \oint_{\Gamma}\frac{dz(Lz + N\theta - La_-)(Lz - M_L -1 + \theta - La_-) e^{N\theta[V(z) - V(z - L^{-1}) ]} }{2 \pi \i \theta  (z - v)} \\
& \times M\left( 1+ \frac{(-\theta)}{L}  \tilde{G}^N_L(z) + \frac{\theta^2 }{2L^2}  \partial_z \tilde{G}_L^N(z) + \frac{\theta^2}{2L^2}[ \tilde{G}^N_L(z) ]^2  ; \llbracket 1, m_k \rrbracket, \dots, \llbracket 1, m_N \rrbracket \right).
\end{split}
\end{equation}
We can remove $1$ from the above cumulant provided that $m_k + \cdots + m_N \geq 1$ in view of (\ref{S3Linearity}), and if $m_k + \cdots + m_N = 0$, we have that we may again remove the $1$ as its contribution integrates to $0$ by Cauchy's theorem (here we used that $v$ and $v_f^i$ all lie outside of $\Gamma$). Removing $1$ from (\ref{ZOP8}), using  (\ref{YOP2}), (\ref{YOP3}) and the linearity of cumulants, see (\ref{S3Linearity}), we obtain (\ref{ZOP5}). In the remainder of this step we focus on establishing (\ref{ZOP6}).\\

Using (\ref{ZOP3}), (\ref{YOP1}) with $x = \theta - 1, y = -1$ we get 
\begin{equation}\label{ZOP9}
\begin{split}
&I_L^2 =  \sum_{\substack{F_r \subseteq \llbracket 1, m_r \rrbracket \\  r = k, \dots, N }} \oint_{\Gamma} \frac{dz(Lz + N\theta - La_-)(Lz - M_L -1 + \theta - La_-)  }{2 \pi \i \theta (z - v)}      \\
& \times \frac{M\left( 1 + \frac{\theta}{L}  \tilde{G}^k_L(z) + \frac{\theta (\theta -2 )}{2L^2}  \partial_z \tilde{G}_L^k(z) + \frac{\theta^2}{2L^2}[ \tilde{G}^k_L(z) ]^2 ; F_k, \dots, F_N \right)}{ \prod_{i = k}^N \prod_{f \in F_i^c} L(v_f^i-z)(v_f^i - z + L^{-1})} + O(L^{-1}).
\end{split}
\end{equation}

As before we can remove $1$ from the above cumulant provided that $|F_k| + \cdots + |F_N| \geq 1$, and if $|F_k| + \cdots + |F_N| = 0$, we have that we may again remove the $1$ as its contribution integrates to $0$ by Cauchy's theorem (here we used that $v$ and $v_f^i$ all lie outside of $\Gamma$).

We next note that if $|F_k| + \cdots + |F_N| < m_1 + \cdots + m_N$ (i.e. $F_j \neq \llbracket 1, m_j \rrbracket$ for some $j \in \llbracket k, N \rrbracket$)
$$\frac{M\left(  \frac{\theta (\theta -2 )}{2L^2}  \partial_z \tilde{G}_L^k(z) + \frac{\theta^2}{2L^2}[ \tilde{G}^k_L(z) ]^2 ; F_k, \dots, F_N \right)}{ \prod_{i = k}^N \prod_{f \in F_i^c} L(v_f^i-z)(v_f^i - z + L^{-1})} = O(L^{-3}),$$
and also if $|F_k| + \cdots + |F_N| < m_1 + \cdots + m_N - 1$
$$\frac{M\left(   \frac{\theta}{L}  \tilde{G}^k_L(z)   ; F_k, \dots, F_N \right)}{ \prod_{i = k}^N \prod_{f \in F_i^c} L(v_f^i-z)(v_f^i - z + L^{-1})} = O(L^{-3}).$$

Combining the observations from the last two paragraphs with (\ref{ZOP9}) gives
\begin{equation*}
\begin{split}
&I_L^2 =   O(L^{-1}) + \oint_{\Gamma} \frac{dz(Lz + N\theta - La_-)(Lz - M_L -1 + \theta - La_-)  }{2 \pi \i \theta (z - v) L^2 }      \\
&\times M\left(  \frac{\theta (\theta -2 )}{2}  \partial_z \tilde{G}_L^k(z) + \frac{\theta^2}{2}[ \tilde{G}^k_L(z) ]^2 ; \llbracket 1, m_k \rrbracket, \dots, \llbracket 1, m_N \rrbracket   \right)\\
& + \sum_{r = k}^N \sum_{f \in \llbracket 1, m_r \rrbracket }\oint_{\Gamma} \frac{dz(Lz + N\theta - La_-)(Lz - M_L -1 + \theta - La_-)  }{2 \pi \i  \theta (z - v) (v_f^r-z)(v_f^r - z + L^{-1}) L^2 }     \\
& \times M\left(  \theta \tilde{G}^k_L(z); \llbracket 1, m_k \rrbracket, \dots, \llbracket 1, m_{r-1} \rrbracket, \llbracket 1, m_{r} \rrbracket \setminus \{i\}, \llbracket 1, m_{r+1} \rrbracket, \dots , \llbracket 1, m_N \rrbracket   \right)\\
& + \oint_{\Gamma} \frac{dz(Lz + N\theta - La_-)(Lz - M_L -1 + \theta - La_-)  }{2 \pi \i \theta (z - v)  L}   M\left( \theta \tilde{G}^k_L(z); \llbracket 1, m_k \rrbracket, \dots, \llbracket 1, m_N \rrbracket   \right) .
\end{split}
\end{equation*}
Combining the last equation with (\ref{YOP3}) we get (\ref{ZOP6}).\\

{\bf \raggedleft Step 5.} In this step we prove (\ref{ZOP7}). We first observe that 
\begin{equation}\label{JK1}
\begin{split}
&\Pi_1^{\theta,j}(Lz - La_-) = \frac{\xi_L(z)}{L^3} + \frac{{\bf 1}\{ \theta \neq 1\}}{1 - \theta} +   \frac{\theta}{L}  \tilde{G}^j_L(z) - \frac{\theta (\theta + 1)}{2L^2}  \partial_z \tilde{G}_L^j(z) + \frac{\theta (1-\theta)}{2L^2}[ \tilde{G}^j_L(z) ]^2    \\
&- \frac{\theta}{L}  \tilde{G}^{j-1}_L(z) + \frac{\theta (1 - \theta)}{2L^2}  \partial_z \tilde{G}_L^{j-1}(z) + \frac{\theta (1-\theta)}{2L^2}[ \tilde{G}^{j-1}_L(z) ]^2 - \frac{\theta (1 - \theta)}{L^2}\tilde{G}^j_L(z)\tilde{G}^{j-1}_L(z) .
\end{split}
\end{equation}
When $\theta \neq 1$ the latter follows from applying (\ref{YOP1}) twice with $x = -\theta, y = -1$ and $x = \theta -1, y = 0$ to the two products in the definition of $\Pi_1^{\theta,j}$ in (\ref{S42E2}). When $\theta = 1$ we have from (\ref{S42E2})
\begin{equation*}
\begin{split}
\Pi_1^{1,j}(Lz - La_-)  = \sum_{i = 1}^j \frac{1}{Lz - \tilde{\ell}_i^j- 1} - \sum_{i = 1}^{j-1} \frac{1}{Lz - \tilde{\ell}_i^{j-1}} = \frac{1}{L} \tilde{G}^j_L(z) - \frac{1}{L^2} \partial_z \tilde{G}^j_L(z) - \frac{1}{L} \tilde{G}^{j-1}_L(z) + \frac{\xi_L(z)}{L^3},
\end{split}
\end{equation*} 
which agrees with (\ref{JK1}).

Substituting (\ref{JK1}) into (\ref{ZOP4}) we get
\begin{equation}\label{ZOP10}
\begin{split}
& I_L^3  =  O(L^{-1}) +    \sum_{\substack{F_r \subseteq \llbracket 1, m_r \rrbracket \\  r = k, \dots, N }}   \sum\limits_{j=k+1}^{N}  \prod_{i = k}^{j-1} {\bf 1 } \{F_i = \llbracket 1, m_i \rrbracket\}    \\
& \times \oint_{\Gamma} dz \frac{(Lz + N\theta - La_-)(Lz - M_L -1 + \theta - La_-)  }{2 \pi \i \theta (z - v)}   \prod_{i = j}^N \prod_{f \in F_i^c}\frac{1}{L(v_f^i-z)(v_f^i - z + L^{-1})} \\
&\times   M \Bigg{(} \frac{{\bf 1}\{ \theta \neq 1\}}{1 - \theta} +   \frac{\theta}{L}  \tilde{G}^j_L(z) - \frac{\theta (\theta + 1)}{2L^2}  \partial_z \tilde{G}_L^j(z) + \frac{\theta (1-\theta)}{2L^2}[ \tilde{G}^j_L(z) ]^2 - \frac{\theta}{L}  \tilde{G}^{j-1}_L(z)   \\
& + \frac{\theta (1 - \theta)}{2L^2}  \partial_z \tilde{G}_L^{j-1}(z) + \frac{\theta (1-\theta)}{2L^2}[ \tilde{G}^{j-1}_L(z) ]^2 - \frac{\theta (1 - \theta)}{L^2}\tilde{G}^j_L(z)\tilde{G}^{j-1}_L(z)  ; F_k, \dots, F_N \Bigg{)}.
\end{split}
\end{equation}

The same argument that allowed us to remove $1$ from (\ref{ZOP8}) in Step 4 applies here and allows us to remove $\frac{{\bf 1}\{ \theta \neq 1\}}{1 - \theta}$ from (\ref{ZOP10}). We next note that if $|F_k| + \cdots + |F_N| < m_1 + \cdots + m_N$ (i.e. $F_j \neq \llbracket 1, m_j \rrbracket$ for some $j \in \llbracket k, N \rrbracket$) 
\begin{equation*}
\begin{split}
&   \prod_{i = k}^{j-1} {\bf 1 } \{F_i = \llbracket 1, m_i \rrbracket\} \prod_{i = j}^N \prod_{f \in F_i^c}\frac{1}{L(v_f^i-z)(v_f^i - z + L^{-1})} \times   M \Bigg{(}  - \frac{\theta (\theta + 1)}{2L^2}  \partial_z \tilde{G}_L^j(z) + \frac{\theta (1-\theta)}{2L^2}[ \tilde{G}^j_L(z) ]^2   \\
& + \frac{\theta (1 - \theta)}{2L^2}  \partial_z \tilde{G}_L^{j-1}(z) + \frac{\theta (1-\theta)}{2L^2}[ \tilde{G}^{j-1}_L(z) ]^2 - \frac{\theta (1 - \theta)}{L^2}\tilde{G}^j_L(z)\tilde{G}^{j-1}_L(z)   ; F_k, \dots, F_N \Bigg{)} = O(L^{-3}).
\end{split}
\end{equation*}
Also if $|F_k| + \cdots + |F_N| < m_1 + \cdots + m_N - 1$
\begin{equation*}
\begin{split}
&O(L^{-3}) =   \prod_{i = k}^{j-1} {\bf 1 } \{F_i = \llbracket 1, m_i \rrbracket\} \prod_{i = j}^N \prod_{f \in F_i^c}\frac{1}{L(v_f^i-z)(v_f^i - z + L^{-1})}  \\
&  \times M \left(\frac{\theta}{L}  \tilde{G}^j_L(z)  -  \frac{\theta}{L}  \tilde{G}^{j-1}_L(z); F_k, \dots, F_N  \right).
\end{split}
\end{equation*}

Combining the observations from the last paragraph with (\ref{ZOP10}) gives
\begin{equation*}
\begin{split}
& I_L^3  =  O(L^{-1}) + \tilde{I}_L^{3,1}+ \tilde{I}_L^{3,2} + \tilde{I}_L^{3,3}, \mbox{ where }
\end{split}
\end{equation*}
\begin{equation*}
\begin{split}
&\tilde{I}_L^{3,1} =      \sum\limits_{j=k+1}^{N}  \oint_{\Gamma} \frac{dz(Lz + N\theta - La_-)(Lz - M_L -1 + \theta - La_-)  }{2 \pi \i L (z - v)}  \\
&  \times  M \left( \tilde{G}^j_L(z) -   \tilde{G}^{j-1}_L(z) ; \llbracket 1, m_k \rrbracket, \dots, \llbracket 1, m_N \rrbracket \right).
\end{split}
\end{equation*}
\begin{equation*}
\begin{split}
&\tilde{I}_L^{3,2} =      \sum\limits_{j=k+1}^{N}  \oint_{\Gamma}  \frac{dz(Lz + N\theta - La_-)(Lz - M_L -1 + \theta - La_-) }{4 \pi \i L^2 (z - v)} \\
&\times M \left( -(\theta + 1)  \partial_z \tilde{G}_L^j(z) +   (1-\theta)[ \tilde{G}^j_L(z) - \tilde{G}^{j-1}_L(z) ]^2   +  (1 - \theta)  \partial_z \tilde{G}_L^{j-1}(z) ;  \llbracket 1, m_k \rrbracket, \dots, \llbracket 1, m_N \rrbracket \right) ,
\end{split}
\end{equation*}
\begin{equation*}
\begin{split}
&  \tilde{I}_L^{3,3} =  \sum_{r = k}^N \sum_{f \in \llbracket 1, m_r\rrbracket }   \sum\limits_{j=k + 1}^{r}   \oint_{\Gamma}\frac{ dz (Lz + N\theta - La_-)(Lz - M_L -1 + \theta - La_-)  }{2 \pi \i  L^2 (z - v)(v_f^r-z)(v_f^r- z + L^{-1}) }  \\
& \times  M \left(   \tilde{G}^j_L(z) - \tilde{G}^{j-1}_L(z) ; \llbracket 1, m_k \rrbracket, \dots, \llbracket 1, m_{r-1} \rrbracket, \llbracket 1, m_{r} \rrbracket \setminus \{f\}, \llbracket 1, m_{r+1} \rrbracket, \dots , \llbracket 1, m_N \rrbracket  \right).
\end{split}
\end{equation*}
Combining the last four equations with (\ref{YOP3}) gives (\ref{ZOP7}).

\bibliographystyle{alpha}
\bibliography{PD}

\end{document}